\Crefname{ass}{Assumption \ref{ass:cpct}}{Assumptions \ref{ass:cpct}}
\newtheorem{theorem}{Theorem}[section]
\newtheorem{proposition}[theorem]{Proposition}
\newtheorem{lemma}[theorem]{Lemma}
\newtheorem{corollary}[theorem]{Corollary}
\theoremstyle{definition}
\newtheorem{remark}[theorem]{Remark}
\newtheorem{assumption}[theorem]{Assumption}
\newcommand{\F}{\mathcal F}
\newcommand{\Law}{\mathrm{Law}}
\newcommand{\id}{\mathrm{id}}
\newcommand{\D}{\mathrm{d}}
\newcommand{\W}{\mathcal W}
\newcommand{\N}{\mathbb N}
\newcommand{\R}{\mathbb R}
\newcommand{\X}{\mathcal X}
\newcommand{\Pc}{\mathcal P}
\newcommand{\Sc}{\mathcal S}
\newcommand{\Scc}{\hat{\mathcal S}}
\newcommand{\Sccc}{\bar{\mathcal S}}
\newcommand{\reg}{\mathrm{r}}
\newcommand{\trace}{\mathrm{Tr}}
\def\P{{\mathbb P}}
\def\E{{\mathbb E}}
\def\Q{{\mathbb Q}}
\def\R{{\mathbb R}}
\def\fcmp{\mathbin{\raise 0.6ex\hbox{\oalign{\hfil$\scriptscriptstyle \mathrm{o}$\hfil\cr\hfil$\scriptscriptstyle\mathrm{9}$\hfil}}}}
\numberwithin{equation}{section}
\author{Gudmund Pammer}
\address[GP]{ETH Z\"urich, Z\"urich, Switzerland}
\email{gudmund.pammer@math.ethz.ch}
\author{Benjamin A.\ Robinson}
\address[BR, WS]{Universit\"at Wien, Vienna, Austria}
\email{ben.robinson@univie.ac.at}
\author{Walter Schachermayer}
\address[BR, WS]{Universit\"at Wien, Vienna, Austria}
\email{walter.schachermayer@univie.ac.at}
\thanks{This research was funded in part by the Austrian Science Fund (FWF) [10.55776/Y782], [10.55776/P34743], [10.55776/P35519], [10.55776/P35197]. For open access purposes, the author has applied a CC BY public copyright license to any author accepted manuscript version arising from this submission.}
\keywords{Kellerer's theorem, Strassen's theorem, mimicking martingales, peacocks, diffusion processes, martingale optimal transport}
\date{\today}
\subjclass[2020]{60G44, 60J60, 60H10, 60J25 (Primary) 60F99 (Secondary)}
\begin{document}
\title{A regularized Kellerer theorem in arbitrary dimension}

\begin{abstract}
We present a multidimensional extension of Kellerer's theorem on the existence of mimicking Markov martingales for peacocks, a term derived from the French for stochastic processes increasing in convex order. For a continuous-time peacock in arbitrary dimension, after Gaussian regularization, we show that there exists a strongly Markovian mimicking martingale It\^o diffusion. A novel compactness result for martingale diffusions is a key tool in our proof. Moreover, we provide counterexamples to show, in dimension $d \geq 2$, that uniqueness may not hold, and that some regularization is necessary to guarantee existence of a mimicking Markov martingale.
\end{abstract}

\maketitle

\section{Introduction}\label{sec:intro}

Given a finite set of probability measures on $\R^d$ that are increasing in convex order, Strassen \cite{St65} showed in 1965 that there exists a Markov martingale whose marginals coincide with the given probability measures. We call this latter property \emph{mimicking}. For a family of measures indexed by \emph{continuous time} that are increasing in convex order, also called a peacock (\emph{Processus Croissant pour l'Ordre Convexe}), Kellerer \cite{Ke72} proved in 1972 that there exists a mimicking strong Markov martingale in dimension one. The questions of continuity and uniqueness for Kellerer's mimicking martingale remained open until the work of Lowther \cite{Lo08b, Lo08a, Lo08d, Lo09, Lo08c} completely clarified the situation. Lowther showed that, in dimension one, there exists a \emph{unique strong Markov} mimicking martingale and, moreover this process has \emph{continuous paths} when the peacock is weakly continuous and the marginals have convex support. It is also known that the strong Markov property is required to obtain uniqueness; Beiglb\"ock, Lowther, Pammer and Schachermayer \cite{BePaSc21a} construct a one-dimensional continuous Markov martingale whose marginals coincide with those of Brownian motion but which does not have the strong Markov property.

While the problem of finding mimicking Markov martingales is thus very well understood for one-dimensional peacocks, the higher dimensional case has remained wide open, although 50 years have passed since the publication of Kellerer's result. In this paper, to the best of our knowledge, we provide the first known multidimensional extension of Kellerer's theorem. Given a peacock on $\R^d$, we show that, after some Gaussian regularization, there exists a \emph{strongly Markovian martingale It\^o diffusion} that mimics the regularized peacock.

To prove our result, we construct a martingale It\^o diffusion that mimics the regularized peacock on the dyadics, and then pass to a limit in finite dimensional distributions. In order to take such a limit, we prove a compactness result for martingale It\^o diffusions.

Additionally, we show that uniqueness does not necessarily hold in higher dimensions. We consider an example of a martingale It\^o diffusion studied by Robinson \cite{Ro20} and Cox--Robinson \cite{CoRo22}, and we show that this martingale mimics the marginals of a two-dimensional Brownian motion, while itself not being a Brownian motion. We also show, by means of counterexamples, that the Gaussian regularization is necessary to guarantee the existence of a mimicking Markov martingale.

For $\eta > 0$, let $\gamma^\eta$ denote the centered Gaussian law on $\R^d$ with covariance $\eta \id$, and let $\ast$ denote the convolution operator between measures. Our main result is the following.

\begin{theorem}[existence of mimicking martingales]\label{thm:kellerer-regularized}
	Let $(\mu_t)_{t \in [0, 1]}$ be a weakly continuous square-integrable peacock on $\R^d$. Fix $\delta, \varepsilon > 0$ and, for each $t \in [0, 1]$, define the regularized measure $\mu^\reg_t \coloneqq \mu_t \ast \gamma^{\varepsilon (t +\delta)}$. Then there exists a strongly Markovian martingale It\^o diffusion $(M_t)_{t \in [0, 1]}$ mimicking the regularized peacock $(\mu^\reg_t)_{t \in [0, 1]}$.
	
	More precisely, there exists a measurable function $(t, x) \mapsto \sigma_t(x)$ on $[0, 1] \times \R^d$, taking values in the set of positive definite matrices such that, on any probability space $(\Omega, \F, \P)$ supporting a standard $\R^d$-valued Brownian motion $(B_t)_{t \in [0, 1]}$ and an independent random variable $\xi \sim \mu_0$, the martingale $M$ satisfies
	\begin{equation}\label{eq:mimicking-m}
		\D M_t = \sigma_t(M_t) \D B_t, \quad M_0 = \xi.
	\end{equation}
	The map $(t, x) \mapsto \sigma_t(x)^2 \coloneqq \sigma_t(x)\sigma_t(x)^\top$ is locally Lipschitz continuous in the variable $x$, uniformly in $t \in [0, 1]$ and, for every $x \in \R^d$, there exist constants $c_x, C_x > 0$ such that, for $t \in [0, 1]$, we have the bounds
	\begin{equation}
		c_x \, \id \leq \sigma_t(x)^2 \leq C_x \, \id.
	\end{equation}
	Moreover, the martingale $M$ is a Feller process.
\end{theorem}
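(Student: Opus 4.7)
The plan is to construct a sequence of approximating mimicking martingale It\^o diffusions on refining dyadic time grids and then extract a limit using the compactness result for martingale diffusions that is announced in the introduction. The Gaussian regularization $\mu_t \ast \gamma^{\varepsilon(t+\delta)}$ is essential throughout: it ensures that every $\mu^\reg_t$ has a smooth, strictly positive density whose logarithmic derivatives inherit quantitative bounds from the Gaussian factor $\gamma^{\varepsilon(t+\delta)}$. This smoothness is what will enable the construction of diffusion coefficients that are locally Lipschitz in $x$ uniformly in $t$, and elliptic on every compact set with constants depending only on the set.

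\emph{Step 1 (one-step construction).} For each $n \geq 1$, fix the dyadic grid $t_k^n = k/2^n$ and, on each subinterval $[t_k^n, t_{k+1}^n]$, build a martingale It\^o diffusion that interpolates in law between $\mu^\reg_{t_k^n}$ and $\mu^\reg_{t_{k+1}^n}$. A natural candidate is a Bass-type martingale associated to the martingale transport problem between these two regularized marginals; since both are convolutions with a Gaussian, one obtains an SDE whose diffusion coefficient $\sigma^{n,k}_t(x)$ is smooth in $x$, locally uniformly elliptic, and satisfies bounds on any compact set that are uniform in $n$ and $k$. Concatenating these pieces across $k$ via the strong Markov property produces a martingale It\^o diffusion $M^n$ on $[0, 1]$ satisfying $\Law(M^n_{t_k^n}) = \mu^\reg_{t_k^n}$ at every dyadic time, with an overall coefficient that is piecewise smooth in $t$ and locally Lipschitz in $x$.

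\emph{Step 2 (compactness and limit).} Tightness of $(M^n)$ follows from uniform $L^2$ estimates coming from the peacock property, together with the coefficient bounds from Step 1. Applying the compactness result for martingale It\^o diffusions extracts a subsequence $(M^{n_j})$ converging in finite-dimensional distributions to a limit process $M$, together with convergence of the coefficients $\sigma^{n_j}$ to some measurable $\sigma$. The conclusion of that compactness theorem guarantees $\D M_t = \sigma_t(M_t)\, \D B_t$, with $\sigma_t(x)^2$ inheriting the local Lipschitz and local ellipticity bounds in the limit. Weak continuity of the peacock and density of the dyadic rationals give $\Law(M_t) = \mu^\reg_t$ for every $t \in [0, 1]$, yielding mimicking. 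The strong Markov and Feller properties then follow from local Lipschitz continuity and ellipticity of $\sigma$ via standard pathwise uniqueness results for SDEs with such coefficients.

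\emph{Main obstacle.} The principal difficulty is Step 1: producing the one-step diffusions with coefficients that are genuinely locally Lipschitz and elliptic, with bounds uniform in the dyadic partition so that the compactness theorem applies. In dimension $d \geq 2$ there is no monotone rearrangement available (unlike Lowther's one-dimensional construction), and an abstract Strassen-type martingale transport does not in general possess a diffusion representation; extracting one with the required regularity is the crux, and is precisely where the Gaussian regularization is used. A second delicate point, handled by the compactness theorem, is ensuring that the limit is a true It\^o diffusion, i.e.\ that its diffusion coefficient is a deterministic function of $(t, M_t)$ rather than of the full history.
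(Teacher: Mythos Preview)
Your overall architecture — dyadic approximation, concatenation, then passage to the limit via the compactness theorem — matches the paper's. The Feller/strong Markov conclusion via well-posedness of the Stroock--Varadhan martingale problem is also how the paper finishes. Where your proposal diverges from the paper, and where it becomes a genuine gap rather than a stylistic difference, is the mechanism in Step~1.

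You propose to build a Bass-type martingale directly between the \emph{regularized} marginals $\mu^\reg_{t^n_k}$ and $\mu^\reg_{t^n_{k+1}}$ and assert that, because both are Gaussian convolutions, the resulting diffusion coefficient is automatically locally Lipschitz and locally uniformly elliptic with constants independent of $n,k$. This is precisely the claim that needs proof, and it is not clear it holds as stated: smoothness of the marginals does not by itself give you quantitative control on the diffusion coefficient of a stretched Brownian motion, let alone a uniform lower bound on $\sigma^2$. The paper does \emph{not} do this. Instead it builds the stretched Brownian motion $\bar M^{n,k}$ between the \emph{unregularized} marginals $\mu_{t^n_k}$ and $\mu_{t^n_{k+1}}$ (with no control whatsoever on $\bar\sigma^{n,k}$), then adds an independent Gaussian $\Gamma^{n,k}\sim\gamma^{\varepsilon(t^n_k+\delta)}$ and applies Brunick--Shreve. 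The point is that this yields an \emph{explicit} mixing formula
\[
\tilde\sigma^{n,k}_t(x)^2=\frac{\int \bar\sigma^{n,k}_t(y)^2\,g^{n,k}(x-y)\,\bar m^{n,k}_t(\D y)}{\int g^{n,k}(x-y)\,\bar m^{n,k}_t(\D y)},
\]
from which the local upper bound and the local Lipschitz constant can be read off by hand, using only $\E\lVert\bar\sigma^{n,k}_t(\bar M^{n,k}_t)\rVert^2=1$ (after time normalization) and the Lipschitz/lower bounds of the Gaussian density $g^{n,k}$.

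The lower bound on $\sigma^2$ is obtained by a separate trick that your proposal does not anticipate: the paper splits each dyadic interval in half, compresses the mixed diffusion into the first half, and on the second half simply runs $\sqrt{2\varepsilon}\,\id$ times Brownian motion. This guarantees $\sigma^{n,k}_t(x)^2\ge 2\varepsilon\,\id$ on the second half of every dyadic interval, which is exactly the form of ellipticity (Assumption~(A4)) that the compactness theorem requires; the compactness theorem then upgrades this half-interval ellipticity to full ellipticity of the limit. Without this device, you have no mechanism for a lower bound at all — the Gaussian mixing formula gives none. So your identification of Step~1 as the crux is correct, but the resolution is not ``Bass between smooth marginals''; it is ``any martingale between rough marginals, then convolve, then Brunick--Shreve for the upper bound, then the half-interval Brownian insert for the lower bound.''
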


Note that, in particular, the mimicking martingale that we construct in \Cref{thm:kellerer-regularized} is continuous and strongly Markovian.
A key ingredient in the construction of this mimicking martingale is the following result that allows us to pass to limits of martingale It\^o diffusions, the details of which are presented in \Cref{sec:compactness-Ito}.

\begin{theorem}[compactness of martingale It\^o diffusions]\label{thm:compactness}
	A set of martingale It\^o diffusions satisfying \Crefrange{it:assumption_1}{it:assumption_5} is precompact in the set of martingale It\^o diffusions with respect to convergence in finite dimensional distributions.
\end{theorem}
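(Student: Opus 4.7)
The plan is to prove precompactness in two stages: (i) establish tightness of the path laws of the approximating martingales $M^n$ on $C([0,1];\R^d)$ and extract a convergent subsequence, and (ii) identify any subsequential weak limit as a martingale Itô diffusion with a coefficient of the required form. For concreteness I take as working hypotheses, in the spirit of \Cref{thm:kellerer-regularized}, that the listed assumptions supply uniform two-sided ellipticity bounds $c_K\,\id \leq (\sigma^n_t)^2 \leq C_K\,\id$ on each compact set $K \subset \R^d$, uniform local Lipschitz control of $(\sigma^n)^2$ in $x$ that is uniform in $t$ across the family, and tightness of the initial laws $\Law(M^n_0)$.

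For the tightness step, starting from the Itô representation $\D M^n_t = \sigma^n_t(M^n_t)\D B^n_t$ and stopping at $\tau_R^n \coloneqq \inf\{t : |M^n_t| \geq R\}$, the upper bound on $(\sigma^n)^2$ combined with the Burkholder--Davis--Gundy inequality yields uniform fourth-moment estimates on increments, so Kolmogorov--\v{C}entsov gives tightness of the stopped path laws; a standard localization ($R \to \infty$) together with tightness of the initial distributions then gives tightness of the full laws on $C([0,1];\R^d)$. Prokhorov's theorem produces a subsequence along which $\Law(M^n) \to \bar{\Q} = \Law(\bar M)$ weakly, and in particular finite-dimensional distributions converge.

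To identify the limit, I first extract a limiting coefficient by Arzelà--Ascoli: equicontinuity in $x$ uniformly in $t$ makes $\{(\sigma^n_t)^2(\cdot)\}$ equicontinuous on compacts, and a diagonal extraction over a countable dense set of times yields a limit $\bar\sigma^2$ that inherits the same local Lipschitz and two-sided ellipticity bounds. The key step is then to show that $\bar M$ solves the martingale problem for the generator $\tfrac12\trace(\bar\sigma^2 \nabla^2 \cdot)$: for every $f \in C^2_c(\R^d)$, $0 \le s < t \le 1$, and bounded continuous $\phi$ on $C([0,s];\R^d)$, Itô's formula gives
\begin{equation*}
\E\left[\phi(M^n|_{[0,s]})\Bigl(f(M^n_t) - f(M^n_s) - \tfrac12 \int_s^t \trace\bigl((\sigma^n_u)^2(M^n_u)\,\nabla^2 f(M^n_u)\bigr)\D u\Bigr)\right] = 0.
\end{equation*}
Passing to the limit along the subsequence via Skorokhod representation, using locally uniform convergence $(\sigma^n)^2 \to \bar\sigma^2$ and uniform integrability (from ellipticity and compact support of $\nabla^2 f$), yields the analogous identity with $\bar\sigma$ in place of $\sigma^n$. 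Standard martingale-problem/SDE theory then produces a Brownian motion $\bar B$ on a possibly enlarged probability space such that $\D \bar M_t = \bar\sigma_t(\bar M_t)\D \bar B_t$, so $\bar M$ is itself a martingale Itô diffusion of the form demanded by the class.

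The main obstacle is the last identification step: passing to the limit in the nonlinear integrand $\trace((\sigma^n_u)^2(M^n_u)\nabla^2 f(M^n_u))$ demands a mode of convergence for $(\sigma^n)^2$ that is compatible with weak convergence of the paths. The uniform-in-$t$ equicontinuity in $x$ is precisely what makes this feasible, since it guarantees $(\sigma^n_u)^2(M^n_u) \to \bar\sigma_u^2(\bar M_u)$ along Skorokhod-converging trajectories; however, upgrading the pointwise-in-$t$ diagonal convergence of the coefficients to a form under which the $\D u$-integral passes to the limit (via dominated convergence) may require an additional time-regularity argument tied to whichever continuity-in-$t$ ingredient is buried in the assumptions, and this is where care is needed to keep the limiting $\bar\sigma$ measurable in $t$ and to prevent pathologies along null sets of times.
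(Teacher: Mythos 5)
Your overall architecture (tightness on path space, then identification of the limit via the martingale problem) is a legitimate alternative to the paper's route, but the identification step contains a genuine gap that your own closing paragraph gestures at without resolving, and it is not a technicality. The assumptions give no regularity of $t \mapsto \sigma^n_t(x)^2$ beyond measurability; what converges (after passing to a subsequence, via Arzel\`a--Ascoli applied to the \emph{time-integrated} coefficients $\Sigma^n_t(x) = \int_0^t \sigma^n_s(x)^2\,\D s$, which are uniformly Lipschitz in $t$ and in $x$) is $\Sigma^n$, not $\sigma^n_t(x)^2$ pointwise in $t$. In the intended application the coefficients oscillate on half-dyadic intervals between two different values (see \eqref{eq:sigma-splitting-interval}), so $(\sigma^n_t(x)^2)_n$ has no pointwise-in-$t$ limit at all; only the $\D t$-averages converge. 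Consequently your ``diagonal extraction over a countable dense set of times'' does not produce a well-defined $\bar\sigma^2$, and the passage to the limit in $\int_s^t \trace\bigl((\sigma^n_u)^2(M^n_u)\nabla^2 f(M^n_u)\bigr)\D u$ is a product of two merely weakly-in-$u$ converging quantities, which does not converge to the product of the limits without an extra argument (e.g.\ freezing the spatial argument on small time intervals and exploiting continuity of paths together with convergence of $\Sigma^n$). Relatedly, your working hypothesis of a pointwise-in-$t$ two-sided ellipticity bound misreads \Cref{it:assumption_4}: the lower bound is only assumed on the union $\bigcup_j [j2^{-k}, j2^{-k}+2^{-k-1}]$, so individual $\sigma^n_t(x)^2$ may be degenerate on half of each dyadic interval; ellipticity is only recovered for the time-averaged limit.

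The paper circumvents exactly this difficulty by never attempting to take a pointwise limit of the coefficients. It defines the limiting $\sigma_t(x)^2$ as the a.e.\ time-derivative $\rho_t(x)$ of the Lipschitz curve $t \mapsto \Sigma_t(x)$, transfers the spatial Lipschitz bound to $\rho$ via weak lower semicontinuity of the Lipschitz seminorm in $L^2$ (\Cref{lem:Lipschitz_continuity_sigma}), and obtains the lower bound $\rho_t(x) \ge c\,\id$ only after integrating \Cref{it:assumption_4} in time. The identification of the limit is then done not by the martingale problem but by an explicit coupling: comparing the dyadic transition kernels of $X^n$ with Gaussian kernels (whose $\W_2$-distance is controlled through $\Sigma^n \to \Sigma$ and \Cref{lem:Lipschitz_continuity_gaussians}), gluing strong solutions of the limiting SDE with $\W_2$-optimal jumps, and running a Gr\"onwall argument. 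If you wish to keep the martingale-problem route, you must replace your Arzel\`a--Ascoli extraction of $\bar\sigma^2$ by the density construction from $\Sigma$, and justify the limit of the $\D u$-integral using only convergence of $\Sigma^n$ (for instance by discretizing $[s,t]$ and using equicontinuity in $x$ plus path continuity to freeze $M^n_u$ at the left endpoints); as written, the proof does not close.
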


Our next main result is that, in dimension $d \geq 2$, mimicking martingales of the form \eqref{eq:mimicking-m} may not be unique.

\begin{theorem}[non-uniqueness of mimicking martingales]\label{thm:non-uniqueness}
	Let $(B_t)_{t \in [0, 1]}$ be a standard Brownian motion on $\R^2$ with initial law $\Law(B_0) = \eta$, where $\eta$ is rotationally invariant with finite second moment. Define a peacock $\mu$ by $\mu_t = \Law(B_t)$, for $t \in [0, 1]$.
	
	Then there exists a continuous strongly Markovian martingale diffusion $(M_t)_{t \in [0, 1]}$ of the form \eqref{eq:mimicking-m}, that is not a Brownian motion, such that $\Law(M_t) = \mu_t$, for all $t \in [0, 1]$.
\end{theorem}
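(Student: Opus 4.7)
Rotational invariance of $\eta$ implies that $\mu_t = \Law(B_t)$ is rotationally invariant for every $t \in [0, 1]$. The plan is to construct $M$ as a rotationally symmetric It\^o diffusion whose dispersion matrix agrees with $\id$ in the radial direction but not in the tangential one, and to use this tangential freedom to match the target marginals.

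For $x \neq 0$, set $e_r(x) = x/|x|$ and let $e_\theta(x)$ be a unit vector orthogonal to $e_r(x)$. I would look for $\sigma_t$ in the ansatz
\[
\sigma_t(x)\sigma_t(x)^\top = \alpha_t(|x|)\, e_r(x) e_r(x)^\top + \beta_t(|x|)\, e_\theta(x) e_\theta(x)^\top,
\]
for smooth positive functions $\alpha_t, \beta_t$ on $(0, \infty)$, chosen so that the right-hand side extends smoothly across the origin. Because the ansatz depends only on $|x|$, every strong solution $M$ has rotationally invariant marginals, so the equality $\Law(M_t) = \mu_t$ reduces to matching the one-dimensional radial distributions $\Law(|M_t|) = \Law(|B_t|)$.

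By It\^o's formula, $R_t = |M_t|$ satisfies
\[
\D R_t = \sqrt{\alpha_t(R_t)}\, \D W_t + \frac{\beta_t(R_t)}{2 R_t}\, \D t
\]
for a one-dimensional Brownian motion $W$. Comparing the Fokker--Planck equation for $R$ with that of the two-dimensional Bessel process $|B_t|$ and integrating once in $r$, using the vanishing at $r = 0$ of the radial density $\rho_t(r)$ of $|B_t|$, yields the pointwise constraint
\[
\beta_t(r) = 1 + \frac{r\, \partial_r\bigl((\alpha_t(r) - 1)\, \rho_t(r)\bigr)}{\rho_t(r)}.
\]
This is one equation in two unknowns. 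Along the lines of the example studied by Robinson \cite{Ro20} and Cox--Robinson \cite{CoRo22}, I would prescribe $\alpha_t(r) = 1 + \phi(t)\, h(r)$ with $h$ a fixed nonzero smooth function compactly supported in some annulus $[a, b] \subset (0, \infty)$ and $\phi \colon [0, 1] \to \R$ smooth with $\phi(0) = 0$ and $\phi \not\equiv 0$, and then read off $\beta_t$ from the formula. Choosing $\|\phi\|_\infty$ small enough keeps $\beta_t > 0$, so $\sigma_t\sigma_t^\top$ is positive definite; outside the support of $h$ we have $\alpha_t = \beta_t = 1$, so $\sigma_t\sigma_t^\top = \id$ and the matrix extends smoothly through the origin.

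The main technical obstacle is verifying the regularity of $(t, x) \mapsto \sigma_t(x)\sigma_t(x)^\top$: local Lipschitz continuity in $x$ uniformly in $t$ and uniform upper and lower ellipticity on compact sets. These reduce to smoothness and strict positivity of $\rho_t$ on $[0, 1] \times [a, b]$, which follow from the smoothing properties of the two-dimensional heat semigroup together with the choice $\phi(0) = 0$. Once these bounds are in place, \eqref{eq:mimicking-m} admits a unique strong solution which is a continuous strongly Markovian Feller martingale. Finally, since $\sigma_t \sigma_t^\top \not\equiv \id$ by construction, L\'evy's characterization rules out $M$ being a Brownian motion, which would complete the proof.
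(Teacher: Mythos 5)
Your construction is genuinely different from the paper's. The paper exhibits a single explicit example: the SDE $\D M_t = \tfrac{1}{|M_t|}(M_t + M_t^\perp)\,\D W_t$ driven by a one-dimensional Brownian motion (the $\lambda = \tfrac{\sqrt 2}{2}$ member of the Cox--Robinson family \eqref{eq:sde-lambda} after a time change), and simply cites \cite[Proposition 3.2]{CoRo22} for the facts that solutions avoid the origin and that $|M_t|$ is a two-dimensional Bessel process; rotational symmetry of the marginals then forces $\Law(M_t) = \mu_t$. In your radial/tangential language the paper's example has $\alpha_t \equiv \beta_t \equiv 1$ but a \emph{rank-one} dispersion matrix $\sigma\sigma^\top = \tfrac{1}{|x|^2}(x + x^\perp)(x + x^\perp)^\top$, i.e.\ the radial and tangential noises are perfectly correlated; you instead keep the noise diagonal in the frame $(e_r, e_\theta)$ and perturb the eigenvalues subject to the Fokker--Planck constraint. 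Your route buys more: a whole family of counterexamples (one for each admissible $\phi, h$), and uniformly elliptic ones, which sits closer to the class of diffusions appearing in \Cref{thm:kellerer-regularized}. The price is that you must verify regularity and well-posedness by hand, whereas the paper's verification is essentially a citation. Your Fokker--Planck computation is correct (the drift of $R_t = |M_t|$ is indeed $\beta_t(R_t)/(2R_t)$ because the Hessian of $|x|$ in $\R^2$ is $\tfrac{1}{|x|}e_\theta e_\theta^\top$), and the integration constant vanishes because $\alpha_t - 1$ is supported away from $r = 0$.

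Two points need shoring up. First, the regularity near $t = 0$: your formula gives $\beta_t - 1 = \phi(t)\bigl(r h'(r) + r h(r)\,\partial_r \log \rho_t(r)\bigr)$, and $\partial_r \log \rho_t$ blows up on the annulus as $t \searrow 0$ (already for $\eta = \delta_0$ one has $\partial_r \log \rho_t(r) = \tfrac1r - \tfrac rt$, and for general $\eta$ with unbounded support the rate is harder to control). The bare condition $\phi(0) = 0$ is therefore not sufficient as stated; you need $\phi$ to vanish fast enough to dominate this blow-up, and the cleanest fix is to take $\phi \equiv 0$ on $[0, \tfrac12]$, so that all estimates are only needed on $[\tfrac12, 1] \times [a,b]$, where $\rho_t$ is smooth and bounded away from zero by compactness. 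Second, passing from ``$\rho_t$ solves the forward equation of $R$'' to ``$\Law(|M_t|) = \Law(|B_t|)$'' requires a uniqueness statement for the Fokker--Planck flow (equivalently, well-posedness of the one-dimensional martingale problem for $R$); this does follow from the regularity and ellipticity you establish, but it should be invoked explicitly. With these repairs the argument is complete; the final step that $M$ is not a Brownian motion is fine, since $\sigma_t(M_t)\sigma_t(M_t)^\top \neq \id$ on an event of positive probability, so $\langle M \rangle_t \neq t\,\id$ and L\'evy's characterization applies.
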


We further construct a series of counterexamples in dimension $d = 4$ which show that, without regularization, \Cref{thm:kellerer-regularized} does not hold in full generality, even without imposing continuity of the mimicking martingale, let alone the It\^o diffusion property.

\begin{theorem}[necessity of regularization]\label{thm:necessity}
	There exists a weakly continuous square-integrable peacock $(\mu_t)_{t \geq 0}$ on $\R^4$ such that, for the peacock $(\mu_t \ast \gamma^t)_{t \geq 0}$, there exists no mimicking Markov martingale.
\end{theorem}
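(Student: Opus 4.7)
The plan is to exploit the fact that $\gamma^0 = \delta_0$, so in the statement of Theorem~\ref{thm:necessity} the regularization $(\mu_t \ast \gamma^t)$ leaves the initial marginal $\mu_0$ completely unsmoothed. This is precisely the gap between the hypothesis $\delta > 0$ in Theorem~\ref{thm:kellerer-regularized} and the regularization $\gamma^t$ used here. I will construct a peacock on $\R^4$ whose initial distribution $\mu_0$ is singular in such a way that any candidate mimicking Markov martingale is forced to carry non-trivial information about $M_0$ at all later times.

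Concretely, I write $\R^4 \cong \R^2 \oplus \R^2$ and take $\mu_0$ supported on a one-dimensional subspace of the first factor, so that $\mu_0 \ast \gamma^0 = \mu_0$ is genuinely singular. For $t > 0$, I build $\mu_t$ as a mixture $\tfrac{1}{2}\nu_t^{(1)} + \tfrac{1}{2}\nu_t^{(2)}$ of two peacocks which agree at $t = 0$ (both having initial marginal $\mu_0$) but which for positive $t$ differ on some geometric feature of $\R^4$ that survives convolution with $\gamma^t$, for example principal axes of the covariance that rotate differently in the two orthogonal $\R^2$ factors. The convex-order condition is checked using the one-dimensionality of $\mu_0$ to match low moments with either branch, and using the independence of the two branches in the two factors to match higher moments. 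Weak continuity and square-integrability are verified by direct construction.

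The third and crucial step is to show non-existence of a mimicking Markov martingale for the regularized peacock. Given a candidate $M$, I Fourier-invert
\[
\widehat{\mu_t \ast \gamma^t}(\xi) = \tfrac{1}{2}\bigl(\hat{\nu}_t^{(1)}(\xi) + \hat{\nu}_t^{(2)}(\xi)\bigr)\,e^{-t|\xi|^2/2}
\]
to recover $\mu_t$, and hence the mixture structure, from the regularized marginal. From here, I extract from the joint distribution $\Law(M_s, M_t)$ a random label $L \in \{1,2\}$ which records which branch $M$ is following, and is determined by the path but not by the current position $M_s$ alone. The martingale property forces $M$ to commit to a single branch once chosen, so $L$ is a genuine path functional; the Markov property would require $L$ to be $\sigma(M_s)$-measurable, yielding a contradiction. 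Dimension $d = 4 = 2 + 2$ is used to provide room for two independent geometric features that can be toggled separately without being annihilated by the isotropic Gaussian smoothing.

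The main obstacle is this last step, because one must exclude \emph{every} Markov kernel, not just diffusive ones, and Gaussian smoothing destroys any support-based obstruction. The key technical tool will be a disintegration argument: any Markov transition kernel for $(\mu_t \ast \gamma^t)$ lifts to a compatible family of couplings of the underlying $(\nu_t^{(i)})$, and one must show that no such compatible family is consistent with the martingale constraint on both branches simultaneously. This should reduce to a finite-dimensional incompatibility that can be verified by explicit computation with characteristic functions, exploiting the multiplicative action of $e^{-t|\xi|^2/2}$ on the Fourier side.
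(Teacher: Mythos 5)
Your high-level target is the right one --- a two-branch peacock on $\R^2\oplus\R^2$ whose branch label is encoded in the germ of the path but not in the current position --- and this is indeed the skeleton of the paper's construction (\Cref{prop:ex-regularized}). However, the two steps you flag as "crucial" are exactly where your proposal has genuine gaps, and the mechanisms you propose for them would not work. First, the claim that "the martingale property forces $M$ to commit to a single branch once chosen" is unjustified and, for a generic two-branch mixture, false: \Cref{rem:ex-cadlag} of the paper exhibits a c\`adl\`ag \emph{strong Markov} mimicking martingale for the unmodified two-branch peacock, precisely by letting the process jump between branches at a compensating Poisson rate. Branch commitment must be \emph{manufactured}, and the paper does this with two very specific ingredients: each branch is the circular martingale \eqref{eq:cbm}, whose radius is a deterministic function of time, and the two branches are time-changed via $a_1,a_2$ so that on each dyadic interval only one branch's radius grows. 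Then the convex function $f(x)=\sqrt{x_3^2+x_4^2}$ is constant on one branch and deterministic on the other, and the equality $\E[f(Y_t)]=\E[f(Y_{t_0})]$ on intervals where $a_2$ is constant pins the conditional Jensen inequality to an equality, which is what forbids branch switching (\Cref{prop:ex-discontinuous}). Your "rotating principal axes of covariance" is a distributional feature, not one controlled pathwise by the martingale property (which constrains conditional means only), so no analogous rigidity is available.

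Second, the non-existence argument cannot "reduce to a finite-dimensional incompatibility" between two fixed times verified on the Fourier side: by Strassen's theorem there is never a finite-dimensional obstruction to Markov martingale transitions between marginals in convex order. The obstruction in the paper lives in the germ $\sigma$-algebra $\F^Y_0=\bigcap_{s>0}\F^Y_s$ under the usual conditions --- one builds events $\Sccc^1_{t_k}$ (being within distance $t_k$ of the sphere $S^1_{a_1(t_k)}$) whose symmetric differences along dyadic times are summable (\Cref{lem:app-reg}), so that a limit event $\Sc\in\F^Y_0$ of probability $1/2$ exists while $\sigma(Y_0)$ is trivial. Surviving the Gaussian smoothing is then a quantitative matter of making the noise scale ($t^7$, via the exponent $14$ and a final time change) negligible against the geometric scale $\sqrt{t}$ of the spheres; deconvolution of the marginals, which you propose, recovers only one-dimensional distributions and says nothing about the transition kernels where the obstruction lies. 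Without a concrete replacement for the deterministic-radius/alternating-time-change mechanism and for the germ-field limit argument, the proof does not go through.
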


While previous authors have considered the problem of finding mimicking martingales in general dimensions, to the best of our knowledge the present work is the first to provide a multidimensional extension of Kellerer's theorem. Prior to Kellerer's work, Doob \cite{Do68} proved the existence of mimicking martingales taking values in an abstract compact space in continuous time, but notably did not consider the Markov property. More recently, Hirsch and Roynette \cite{HiRo13} proved existence for continuous-time peacocks on $\R^d$, $d \geq 1$, with right-continuous paths, again without the Markov property.

Juillet \cite{Ju16} considered generalizing Kellerer's theorem in two different directions, first showing that when a peacock on $\R$ is indexed by a two-parameter family with some partial order, mimicking martingales may not exist at all. Moreover, \cite{Ju16} provides an example of a peacock on $\R^2$ for which there exists no mimicking martingale that additionally satisfies the so-called \emph{Lipschitz Markov} property, defined in \cite[Definition 6]{Ju16}. The Lipschitz Markov property implies the Feller property and, for c\`adl\`ag processes, the strong Markov property; see \cite[Lemma 4.2]{Lo09}. The key property of the class of c\`adl\`ag Lipschitz Markov processes is compactness with respect to convergence in finite dimensional distributions, as shown in \cite[Lemma 4.5]{Lo09}. On the other hand, it is well known that the class of Markov martingales is not closed with respect to this mode of convergence; see, e.g.~\cite[Example 1]{BeHuSt16}. All proofs of Kellerer's theorem that are known to us make use of the compactness of Lipschitz Markov processes; see, e.g.~\cite{BeHuSt16,HiRoYo14,Ke72,Lo09}. In light of the result of \cite{Ju16}, the notion of Lipschitz Markovianity does not lend itself well to the higher-dimensional problem. In its place, we consider a class of Feller processes that are martingale It\^o diffusions with particular properties. We show in \Cref{thm:compactness} that this set of processes is compact with respect to convergence in finite dimensional distributions.

We have seen that, in dimension one, uniqueness holds in the class of continuous strong Markov mimicking martingales when the marginals of the peacock have convex support. \Cref{thm:non-uniqueness} shows that strong Markovianity is not sufficient to guarantee uniqueness in higher dimensions, by exhibiting a continuous two-dimensional strong Markov martingale with Brownian marginals that is not itself a Brownian motion. The question of the existence of martingales distinct from Brownian motion that have Brownian marginals goes back to Hamza and Klebaner \cite{HaKl07}, who showed that such a \emph{fake Brownian motion} with discontinuous paths exists in one dimension. As already mentioned, the culmination of this one-dimensional investigation was the construction \cite{BePaSc21a} of a continuous Markovian fake Brownian motion. Of course Brownian motion is the unique continuous \emph{strong} Markov martingale with Brownian marginals in one dimension. In two dimensions however, we show in \Cref{thm:non-uniqueness} that there exists a fake Brownian motion that is continuous and strongly Markovian.

We remark that the mimicking martingale of \Cref{thm:kellerer-regularized} is an It\^o diffusion process with Markovian diffusion coefficient. Finding mimicking martingales of this form has also received extensive interest since the work of Krylov \cite{Kr85} and Gy\"ongy \cite{Gy86}. In fact we twice apply a more recent result of Brunick and Shreve \cite{BrSh13} on mimicking Markovian diffusions in our construction in \Cref{sec:proof-of-theorem}.

For a more detailed review of the existing literature, we refer the reader to the surveys of Hirsch, Roynette and Yor \cite{HiRoYo14} and Beiglb\"ock, Pammer and Schachermayer \cite{BePaSc22}, and the references therein.

The structure of the present article is as follows. In \Cref{sec:proof-of-theorem}, we construct a strongly Markovian mimicking martingale It\^o diffusion, thus proving \Cref{thm:kellerer-regularized}. We then prove \Cref{thm:non-uniqueness} in \Cref{sec:non-uniqueness}, by providing a counterexample to uniqueness of mimicking martingales. We present further examples in \Cref{sec:necessity}, which show that existence may fail without regularization, thus proving \Cref{thm:necessity}. Finally, in \Cref{sec:compactness-Ito}, we prove the compactness result \Cref{thm:compactness} for martingale It\^o diffusions, which is key to the proof of \Cref{thm:kellerer-regularized} in \Cref{sec:proof-of-theorem}.

We introduce some notation and terminology that will be used throughout the paper. The notation $|\cdot|$ represents the Euclidean norm on $\R^d$, and $B_R$ denotes the closed ball with radius $R > 0$ centered at the origin. We denote by $\Pc_2(\R^d)$ the set of probability measures on $\R^d$ with finite second moment. We denote by $\F^X$ the natural filtration of a stochastic process $X$, enlarged as necessary to satisfy the usual conditions. For measures $\mu, \nu$, we write $\mu \preceq \nu$ to denote that $\mu$ is dominated by $\nu$ in convex order; i.e.~for any convex function $f$, $\int f \D \mu \leq \int f \D \nu$. A family of measures $(\mu_t)_{t \in I}$ is called a \emph{peacock} if it is increasing in the convex order.\footnote{The terminology \emph{peacock} was introduced by Hirsch, Profeta, Roynette and Yor \cite{HiPrRoYo11} as a pun on the French \emph{Processus Croissant pour l'Ordre Convexe} (PCOC), meaning a process increasing in convex order.} We say that a process $(X_t)_{t \in I}$ \emph{mimics} $(\mu_t)_{t \in I}$ if $\Law(X_t) = \mu_t$ for all $t \in I$.

For matrices $A, B \in \R^{d \times d}$ the notation $A \leq B$ denotes that the matrix $B - A$ is positive semidefinite. When working with matrices, we will always use the Hilbert--Schmidt norm (also known as the Frobenius norm): for $A \in \R^{d \times d}$, we write $\lVert A \rVert = \trace(AA^\top) = \sum_{i, j = 1}^d A_{ij}^2 = \sum_{i = 1}^d \lambda_i^2$, where $\lambda_i$ are the eigenvalues of $A$. We further denote the square matrix $A \coloneqq A A^\top$.

\section{Construction of a mimicking martingale}\label{sec:proof-of-theorem}

Let $d \geq 2$ and let $(\mu_t)_{t \in [0, 1]}$ be a weakly continuous peacock in $\mathcal{P}_2(\R^d)$; i.e.~$\mu_{t_0} \preceq \mu_{t_1}$ for all $t_0 \leq t_1$, $\sup_{t \in [0, 1]} \int |x|^2 \mu_t(\D x) = \int |x|^2 \mu_1(\D x) < \infty$, and $t \mapsto \int f \D \mu_t$ is continuous for any bounded continuous function $f$. Fix $\delta, \varepsilon > 0$ and define the regularized peacock $\mu^\reg$ by
\begin{equation}\label{eq:reg-def}
	\mu^\reg_t = \mu_t \ast \gamma^{\varepsilon(t + \delta)}, \quad t \in [0, 1].
\end{equation}
Note that the process $(\mu^\reg_t)_{t \in [0, 1]}$ is a peacock satisfying $\mu_t \preceq \mu^\reg_t$, for all $t \in [0, 1]$.

\begin{remark}\label{rem:time-change}
	The relevant feature of the function
	\begin{equation}\label{eq:gauss-conv}
		\varphi(t) = \varepsilon (t + \delta), \quad t \in [0, 1],
	\end{equation}
	is that $\phi(0) > 0$ and $t \mapsto \phi(t)$ is strictly increasing. It will become clear from the construction below that we can replace \eqref{eq:gauss-conv} with any such function.
	
	In this context, we also normalize the peacock $(\mu_t)_{t \in [0, 1]}$ by making a deterministic time change so that $\int |x|^2 \mu_{t+h}(\D x) - \int |x|^2 \mu_t(\D x) = h$, for $t \in [0, 1)$, $h > 0$.
	For convenience, we still take $\varphi$ as in \eqref{eq:gauss-conv} after the time change.
	
	Moreover, in place of the Gaussian family $(\gamma^{\varepsilon(t + \delta)})_{t \in [0, 1]}$, one may take another family of centered probability measures $(\eta_t)_{t \in [0, 1]} \subseteq \Pc_2(\R^d)$ that is weakly continuous and \emph{strictly increasing} in convex order. Provided that these measures have smooth densities that are uniformly bounded from below on $\R^d$, one could apply similar arguments to prove an analogue of \Cref{thm:kellerer-regularized}. However, the proof would become significantly more involved.
\end{remark}

Fix $n \in \N$ and consider the dyadics $S^n \coloneqq \{2^{-n}, 2\cdot 2^{-n}, \dotsc, 2^n\cdot 2^{-n}\}\subseteq [0,1]$. For $k \in \{0, \dotsc, 2^n\}$, denote $t^n_k \coloneqq k 2^{-n}$. We will construct a martingale It\^o diffusion that mimics $\mu^\reg$ on the dyadics $S^n$. \Cref{thm:fdd_convergence} will allow us to pass to a limit. We first construct martingale It\^o diffusions on each dyadic interval, before concatenating these intervals. This step is rather standard; cf.~\cite{HiPrRoYo11, HiRoYo14}. For our purposes it is convenient to use the concept of \emph{stretched Brownian motion} introduced in \cite{BaBeHuKa20}. We now fix $k \in \{0, \dotsc, 2^n - 1\}$ and consider the interval $[t^n_k, t^n_{k + 1})$. Recall that $B$ denotes a standard Brownian motion on $\R^d$.

\begin{lemma}\label{lem:first-diffusion}
	Let $(\mu_t)_{t \in [0, 1]}$ be a weakly continuous square-integrable peacock. Then there exists a strongly Markovian martingale diffusion $(\bar{M}^{n, k}_t)_{t \in [t^n_k, t^n_{k +1}]}$, with the representation
	\begin{equation}\label{eq:sbm}
		\D \bar{M}^{n, k}_t = \bar{\sigma}^{n, k}_t(\bar{M}^{n, k}_t) \D B_t, \quad \text{on} \quad [t^n_k, t^n_{k + 1}),
	\end{equation}
	for some measurable function $(t, x) \mapsto \bar{\sigma}^{n, k}_t(x)$, taking values in the set of positive semidefinite matrices, such that $\Law(\bar{M}^{n, k}_{t^n_k}) = \mu_{t^n_k}$ and $\Law(\bar{M}^{n, k}_{t^n_{k + 1}}) = \mu_{t^n_{k+1}}$.
\end{lemma}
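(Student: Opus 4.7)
The plan is to construct $\bar{M}^{n,k}$ in two steps: first build a continuous martingale It\^o process interpolating between $\mu_{t^n_k}$ and $\mu_{t^n_{k+1}}$, and then apply a Markovian projection to produce an It\^o diffusion whose one-dimensional marginals at the endpoints agree with the original ones.

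For the first step, the peacock assumption gives $\mu_{t^n_k} \preceq \mu_{t^n_{k+1}}$ in convex order with finite second moments. I would invoke the stretched Brownian motion construction of \cite{BaBeHuKa20}, which produces a continuous martingale $(N_t)_{t \in [t^n_k, t^n_{k+1}]}$, defined on some probability space supporting a standard $\R^d$-valued Brownian motion $B$, with the prescribed endpoint distributions and admitting the It\^o representation
\begin{equation*}
	\D N_t = \alpha_t \D B_t,
\end{equation*}
for some progressively measurable, square-integrable matrix-valued process $\alpha$. (Alternatively, one could obtain such an $N$ by concatenating a Strassen martingale coupling of $\mu_{t^n_k}$ and $\mu_{t^n_{k+1}}$ with a Brownian-bridge interpolation, but stretched Brownian motion delivers the It\^o structure more directly and is the device explicitly anticipated in the text.)

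For the second step, I would apply the Markovian projection theorem of Brunick and Shreve \cite[Corollary~3.7]{BrSh13} to the It\^o martingale $N$. This yields a weak solution $\bar{M}^{n,k}$ to the SDE \eqref{eq:sbm} with coefficient characterized by
\begin{equation*}
	\bar\sigma^{n,k}_t(x)\,\bar\sigma^{n,k}_t(x)^\top \;=\; \E\bigl[\alpha_t \alpha_t^\top \bigm| N_t = x\bigr],
\end{equation*}
extended by a measurable selection to a positive semidefinite, jointly measurable function on $[t^n_k, t^n_{k+1}] \times \R^d$. By \cite{BrSh13}, $\Law(\bar{M}^{n,k}_t) = \Law(N_t)$ for every $t$ in the interval, so in particular the two prescribed endpoint marginals are matched, and the vanishing drift preserves the martingale property.

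The main obstacle I expect is promoting the resulting process to a \emph{strong} Markov process, since the Brunick--Shreve coefficient is only measurable in general and weak uniqueness of the associated martingale problem is not automatic. I would address this by combining a Krylov-style Markov selection inside the martingale problem with the Gaussian regularization that enters in the intended application of \Cref{lem:first-diffusion}: when $\mu$ is replaced by $\mu^\reg$ as in \eqref{eq:reg-def}, convolution with $\gamma^{\varepsilon(t+\delta)}$ forces strictly positive smooth densities throughout the interval, from which local uniform ellipticity and continuity of $\bar\sigma^{n,k}$ can be extracted, and classical Stroock--Varadhan theory then delivers well-posedness of the martingale problem and hence the strong Markov property. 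The remaining verifications — joint measurability of $(t,x) \mapsto \bar\sigma^{n,k}_t(x)$, positive semidefiniteness, and the It\^o-diffusion form of $\bar{M}^{n,k}$ — are immediate from the construction.
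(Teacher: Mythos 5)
Your first step coincides with the paper's (take the stretched Brownian motion of \cite{BaBeHuKa20} between $\mu_{t^n_k}$ and $\mu_{t^n_{k+1}}$), but your second step introduces a gap that the paper's argument is specifically designed to avoid. You apply the Brunick--Shreve projection \cite[Corollary 3.7]{BrSh13} to replace $N$ by a \emph{new} weak solution of \eqref{eq:sbm}, and you then correctly observe that you cannot establish the strong Markov property of that new process. The paper never performs a projection at this stage: by \cite[Corollary 2.5]{BaBeHuKa20} the stretched Brownian motion is \emph{already} a strong Markov process, and \Cref{lem:markov-diffusion} then shows that for a square-integrable Markov It\^o martingale $\D N_t = \alpha_t \D B_t$ the coefficient satisfies $\alpha_t\alpha_t^\top = \E[\alpha_t\alpha_t^\top \mid \sigma(N_t)]$ almost surely, so the \emph{same} process $N$ admits the representation \eqref{eq:sbm} with a measurable function $\bar\sigma^{n,k}_t(x)$. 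One takes $\bar M^{n,k} = N$; no new process is constructed and the strong Markov property is inherited for free.

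Your proposed repair of the gap does not work within the scope of this lemma. The lemma is stated for the \emph{unregularized} peacock $\mu$ --- the Gaussian convolution only enters afterwards, in \eqref{eq:sum-independent-gaussian} and \Cref{lem:markov-mixing} --- so you cannot invoke ellipticity or smoothness coming from $\gamma^{\varepsilon(t+\delta)}$ to get well-posedness of the martingale problem here (indeed $\mu_{t^n_k}$ may be a Dirac mass and $\bar\sigma^{n,k}$ may degenerate). Moreover, a Krylov-style Markov selection is not a valid substitute: in the absence of uniqueness in law for \eqref{eq:sbm}, the selected Markov family need not coincide with the particular Brunick--Shreve solution that mimics the marginals of $N$, so the selected process could fail to satisfy $\Law(\bar M^{n,k}_{t^n_{k+1}}) = \mu_{t^n_{k+1}}$. (Brunick--Shreve is used in the paper, but only later, in \Cref{lem:brunick-shreve}, where the conditioning is on the added independent Gaussian and the resulting coefficient is shown directly to be locally Lipschitz and elliptic.) The fix is simply to drop the projection step and use the strong Markovianity of the stretched Brownian motion together with \Cref{lem:markov-diffusion}.
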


\begin{proof}
	Let $(\bar{M}^{n, k})_{t \in [t^n_k, t^n_{k +1}]}$ be the stretched Brownian motion with $\Law(\bar{M}^{n, k}_{t^n_k}) = \mu_{t^n_k}$ and $\Law(\tilde{M}^{n, k}_{t^n_{k+1}}) = \mu_{t^n_{k+1}}$, as defined in \cite[Definition 1.6]{BaBeHuKa20}. By definition, $\bar M^{n, k}$ is a martingale with the representation $\D \bar M^{n, k}_t = \theta^{n, k}_t \D B_t $, for some $\F^B$-predictable process $\theta^{n,k}$ taking values in the set of positive semidefinite matrices. Moreover, by \cite[Corollary 2.5]{BaBeHuKa20}, $\bar M^{n, k}$ is a strong Markov process. Thus \Cref{lem:markov-diffusion} gives the existence of a measurable function $\bar{\sigma}^{n, k}_\cdot \colon [t^n_k, t^n_{k + 1}) \times \R^d \to \R^{d \times d}$ such that \eqref{eq:sbm} holds.
\end{proof}

We do not yet have a control on the matrix norm of $(\bar{\sigma}^{n, k}_t(\bar{M}^{n, k}))_{t \in [t^n_k, t^n_{k + 1}]}$. In order to achieve an upper bound on the diffusion matrix, we make a first convolution with a Gaussian. This will have an averaging effect and allow us to control the diffusion from above almost surely. Namely, we take a centered Gaussian random variable $\Gamma^{n, k}$ with covariance matrix $(\varepsilon[t^n_k + \delta]) \id$, independent of $\F^B$ and $\F^{\bar{M}^{n, k}}$, and define
\begin{equation}\label{eq:sum-independent-gaussian}
	\tilde M^{n, k}_t \coloneqq \bar{M}^{n, k}_t + \Gamma^{n, k}, \quad t \in [t^n_k, t^n_{k +1}].
\end{equation}
Then, for the initial law in this interval, we have $\Law(\tilde{M}^{n, k}_{t^n_k}) = \mu_{t^n_k} \ast \gamma^{\varepsilon(t^n_k + \delta)} = \mu^\reg_{t^n_k}$, and for the terminal law, we have the ordering $\Law(\tilde{M}^{n, k}_{t^n_{k+1}}) = \mu_{t^n_{k+1}} \ast \gamma^{\varepsilon(t^n_k + \delta)} \preceq \mu_{t^n_{k+1}} \ast \gamma^{\varepsilon(t^n_{k+1} + \delta)} = \mu^\reg_{t^n_{k + 1}}$, where we recall the definition of $\mu^\reg$ from \eqref{eq:reg-def}. Later we will make a second Gaussian convolution, which will allow us to also bound the squared diffusion matrix from below. We now prove that the square of the diffusion matrix obtained after the first convolution is locally bounded and locally Lipschitz.

\begin{lemma}\label{lem:markov-mixing}
	For $n \in \N$, $k \in \{0, \dotsc, 2^n\}$ and $\bar \sigma^{n, k}$ as in \eqref{eq:sbm}, define the matrix-valued function $(t, x) \mapsto \tilde{\sigma}^{n, k}_t(x)$ as the unique positive semidefinite square root of
	\begin{equation}\label{eq:sigma-mixed}
		\tilde{\sigma}^{n, k}_t(x)^2 \coloneqq \frac{\int \bar \sigma^{n, k}_t(y)^2 g^{n, k}(x - y) \bar{m}^{n, k}_t(\D y)}{\int g^{n, k}(x - y)\bar{m}^{n, k}_t(\D y)}, \quad t \in [t^n_k, t^n_{k + 1}), x \in \R^d,
	\end{equation}
	where $g^{n, k}$ is the density of a Gaussian with mean zero and covariance matrix $(\varepsilon[t^n_k + \delta])\id$, and $\bar{m}^{n, k}_t = \Law(\bar{M}^{n, k}_t)$.
	
	Then for every compact set $K \subseteq \R^d$, there exist constants $C_K, L_K$, independent of $t$, $k$ and $n$, such that
	\begin{equation}
		\lVert \tilde{\sigma}^{n, k}_t(x)^2 \rVert \leq C_K, \quad (t, x) \in [t^n_k, t^n_{k +1}] \times K,
	\end{equation}
	and $x \mapsto \tilde{\sigma}^{n, k}_t(x)^2$ is Lipschitz on $K$ with Lipschitz constant $L_K$, for all $t \in [t^n_k, t^n_{k + 1}]$.
\end{lemma}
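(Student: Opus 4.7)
The strategy is to interpret $\tilde\sigma^{n,k}_t(x)^2$ as a conditional expectation via Bayes' rule, and then to control the numerator from above and the denominator from below on any compact $K\subseteq\R^d$, uniformly in $t,k,n$. The $L^\infty$ bound and the local Lipschitz bound both follow from quotient-rule estimates, using these separate controls together with the smoothness of the Gaussian kernel $g^{n,k}$.

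Since $\tilde M^{n,k}_t = \bar M^{n,k}_t + \Gamma^{n,k}$ with $\Gamma^{n,k}$ Gaussian of density $g^{n,k}$, independent of $\bar M^{n,k}_t$, Bayes' rule yields
\begin{equation*}
\tilde\sigma^{n,k}_t(x)^2 \;=\; \E\bigl[\bar\sigma^{n,k}_t(\bar M^{n,k}_t)^2 \bigm| \tilde M^{n,k}_t = x\bigr],
\end{equation*}
which exhibits \eqref{eq:sigma-mixed} as the Gy\"ongy--Brunick--Shreve projection of $\bar\sigma^{n,k}$ onto the mollified marginal. Write $N^{n,k}_t(x)$ for the numerator and $D^{n,k}_t(x)$ for the denominator of \eqref{eq:sigma-mixed}.

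For the denominator, the martingale property of $\bar M^{n,k}$ together with the peacock assumption gives $\E[|\bar M^{n,k}_t|^2] \leq \int|y|^2\,\mu_1(\D y) =: M_0$, uniformly in $t,k,n$, so Chebyshev's inequality furnishes $R_0$ with $\bar m^{n,k}_t(B_{R_0}) \geq 1/2$. Since the variance parameter $\eta^{n,k} \coloneqq \varepsilon(t^n_k+\delta) \in [\varepsilon\delta,\varepsilon(1+\delta)]$, the densities $g^{n,k}$ admit a uniform positive lower bound on any fixed compact subset of $\R^d$; together with the tightness estimate this gives $D^{n,k}_t(x) \geq c_K > 0$ for $x \in K$, uniformly in $t,k,n$. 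For the numerator, the pointwise bound $g^{n,k}(z) \leq (2\pi\varepsilon\delta)^{-d/2}$ dominates $N^{n,k}_t(x)$ in positive-semidefinite order by a constant times the matrix expectation $\E[\bar\sigma^{n,k}_t(\bar M^{n,k}_t)^2]$. Controlling this expectation uniformly in $t,k,n$ will be the main obstacle; I expect to handle it via the variational characterization of stretched Brownian motion from \cite{BaBeHuKa20}, combined with the normalization in \Cref{rem:time-change} (which makes the second-moment increment on $[t^n_k,t^n_{k+1}]$ equal to its length $2^{-n}$), producing a pointwise-in-$t$ bound independent of the dyadic scale $n$.

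For the Lipschitz property, I differentiate the quotient $\tilde\sigma^{n,k}_t(x)^2 = N^{n,k}_t(x)/D^{n,k}_t(x)$ in $x$. The identity $\nabla_x g^{n,k}(x-y) = -g^{n,k}(x-y)(x-y)/\eta^{n,k}$ introduces, via the quotient rule, additional integrands carrying a factor $(x-y)/\eta^{n,k}$ against $g^{n,k}(x-y)$. For $x \in K$ these factors are dominated, after splitting the domain into a large ball $B_R$ and its complement, by a polynomial in $|y|$ against a Gaussian weight, so the same mechanism that yielded the numerator bound delivers a uniform upper estimate on $\nabla_x(N^{n,k}_t/D^{n,k}_t)$; combined with the lower bound $D^{n,k}_t(x) \geq c_K$, this produces a Lipschitz constant $L_K$ depending only on $K$, $\varepsilon$, $\delta$ and $M_0$. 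The hardest step throughout is the pointwise-in-$t$ control on $\E[\bar\sigma^{n,k}_t(\bar M^{n,k}_t)^2]$: for a generic martingale with the same integrated variance such a bound would fail, and it is precisely the minimality of stretched Brownian motion that rescues the argument.
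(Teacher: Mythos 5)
Your overall strategy coincides with the paper's: lower-bound the denominator of \eqref{eq:sigma-mixed} on compacts via a tightness estimate for $\bar m^{n,k}_t$ together with the uniform positivity of $g^{n,k}$ (the variance parameter lying in $[\varepsilon\delta,\varepsilon(1+\delta)]$), upper-bound the numerator by $(2\pi\varepsilon\delta)^{-d/2}\,\E[\lVert\bar\sigma^{n,k}_t(\bar M^{n,k}_t)\rVert^2]$, and obtain the Lipschitz estimate from the quotient structure together with the (uniform) Lipschitz continuity of the Gaussian kernel. Your Chebyshev route to $\bar m^{n,k}_t(B_{R_0})\ge 1/2$ is a harmless variant of the paper's Doob/convex-order argument, and differentiating the quotient rather than taking finite differences is immaterial.

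The one place where you stop short is exactly the step you flag as ``the main obstacle'': the pointwise-in-$t$ control of $\E[\lVert\bar\sigma^{n,k}_t(\bar M^{n,k}_t)\rVert^2]$, which you promise to extract from the variational characterization of stretched Brownian motion but do not actually derive. The paper closes this gap in one line: after the deterministic time change of \Cref{rem:time-change}, $\E[\lvert\bar M^{n,k}_{t+h}\rvert^2-\lvert\bar M^{n,k}_t\rvert^2]=h$ for all $t\in[t^n_k,t^n_{k+1})$ and small $h>0$, and the It\^o isometry gives $\E[\lvert\bar M^{n,k}_{t+h}\rvert^2-\lvert\bar M^{n,k}_t\rvert^2]=\int_t^{t+h}\E[\lVert\bar\sigma^{n,k}_s(\bar M^{n,k}_s)\rVert^2]\,\D s$, whence $\E[\lVert\bar\sigma^{n,k}_t(\bar M^{n,k}_t)\rVert^2]=1$ for (a.e., hence by the intended regularity all) $t$, uniformly in $n$ and $k$. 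So the input needed is not an abstract minimality property of stretched Brownian motion but the assertion that its second moment grows \emph{linearly on the interior of each dyadic interval}, not merely that the total increment over $[t^n_k,t^n_{k+1}]$ equals $2^{-n}$; your correct observation that an averaged bound would not suffice for a generic interpolating martingale is precisely why this interior linearity must be invoked. Without supplying that identity (or an equivalent pointwise bound), your numerator estimate and hence both conclusions of the lemma remain unproved; with it, your argument goes through as written.
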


\begin{proof}		 
	 Choose $R > 0$ such that $\int |x| \D(\mu_1 \ast \gamma^{\varepsilon(1+\delta)})(x) \leq R/2$. Applying Doob's maximal inequality and the convex ordering of the marginals gives the bound $\bar{m}^{n, k}_t(B_R) \geq \frac{1}{2}$, for all $t \in [t^n_k, t^n_{k + 1}]$ and all $k \in \{1, \dotsc, 2^n - 1\}$. Then, for an arbitrary compact set $K \subseteq \R^d$, we can bound the normalising constant in the denominator of \eqref{eq:sigma-mixed} by
	 \begin{equation}\label{eq:denominator-bound}
	 	\begin{split}
		 	\int_{\R^d} g^{n, k}(x - y)\bar{m}^{n, k}_t(\D y) & \geq \int_{B_R} g^{n, k}(x - y)\bar{m}^{n, k}_t(\D y)\\
		 	& \geq \frac{\bar{C}_K}{2}(2 \pi \varepsilon [t^n_k + \delta])^{-\frac{d}{2}}, \quad \text{for} \quad x \in K,
	 	\end{split}
	 \end{equation}
	 where $\bar{C}_K \coloneqq \inf \!\left\{\exp \{- \varepsilon^{-1} \delta^{-1} \lvert x - y \rvert^2 \} \colon \; x \in K, y \in B_R\right\}\! > 0$, independent of $t$ and $k$. We bound the numerator of \eqref{eq:sigma-mixed} in the Hilbert--Schmidt norm by
	 \begin{equation}
	 	\begin{split}
	 		\!\left \Vert \int \bar \sigma^{n, k}_t(y)^2 g^{n, k}(x - y) \bar{m}^{n, k}_t(\D y)\right \rVert & \leq (2 \pi \varepsilon [t^n_k + \delta])^{-\frac{d}{2}} \E [\lVert \bar{\sigma}^{n, k}_t(\bar{M}^{n, k}_t)\rVert^2].
	 	\end{split}
	 \end{equation}
	 Recall from \Cref{rem:time-change} that $\E[\lvert \bar{M}^{n, k}_{t + h}\rvert^2 - \lvert \bar{M}^{n, k}_{t}\rvert^2] = h$, for all $t \in [t^n_k, t^n_{k + 1})$ and $h$ sufficiently small. But, by the It\^o isometry,
	 \begin{equation}
	 	\E[\lvert \bar{M}^{n, k}_{t + h}\rvert^2 - \lvert \bar{M}^{n, k}_{t}\rvert^2] = \int_t^{t + h} \E [\lVert \bar{\sigma}^{n, k}_t(\bar{M}^{n, k}_t)\rVert^2] \D t,
	 \end{equation}
	 and so $\E [\lVert \bar{\sigma}^{n, k}_t(\bar{M}^{n, k}_t)\rVert^2] = 1$, for all $t \in [t^n_k, t^n_{k +1}]$. Altogether, we have the upper bound
	 \begin{equation}
	 	\lVert \tilde \sigma^{n, k}_t(x)^2 \rVert \leq \frac{2}{\bar{C}_K}, \quad \text{for} \quad x \in K.
	 \end{equation}
	 
	 It remains to prove continuity. To save notation in the following calculation, we suppress the dependency on $n$ and $k$, setting $g \coloneqq g^{n, k}$, $\bar{\sigma} \coloneqq \bar{\sigma}^{n, k}$, $\bar{m} \coloneqq \bar{m}^{n, k}$. Fix $t \in [t^n_k, t^n_{k + 1}]$ and $x_0, x_1 \in K$, for some compact set $K \subseteq \R^d$. Then, from the definition \eqref{eq:sigma-mixed} and the bound \eqref{eq:denominator-bound}, we calculate
	 \begin{equation}
	 	\begin{split}
		 	& \!\left\lVert \tilde{\sigma}^{n, k}_t(x_1)^2 - \tilde{\sigma}^{n, k}_t(x_0)^2 \right \rVert\\
		  	& \qquad \leq 4(2 \pi \varepsilon [t^n_k + \delta])^d\bar{C}_K^{-2}\!\left\lVert \int g(x_0 - y) \bar{m}_t(\D y) \int \bar \sigma_t(y)^2 g(x_1 - y) \bar{m}_t(\D y) \right .\\
		 	& \qquad \qquad \qquad \qquad \qquad \!\left. - \int g(x_1 - y) \bar{m}_t(\D y) \int \bar \sigma_t(y)^2g(x_0 - y) \bar{m}_t(\D y) \right\rVert\\
		 	& \qquad \leq 4(2 \pi \varepsilon [t^n_k + \delta])^d\bar{C}_K^{-2} \!\left \lvert \int g(x_0 - y) \bar{m}_t(\D y)  \right \rvert \!\left \lVert \int \!\left[ g(x_1 - y) - g(x_0 - y)\right]\! \bar \sigma_t(y)^2 \bar{m}_t(\D y) \right \rVert\\
		 	& \qquad \qquad + 4(2 \pi \varepsilon [t^n_k + \delta])^d\bar{C}_K^{-2} \!\left \lvert \int \!\left[ g(x_0 - y) - g(x_1 - y) \right]\! \bar{m}_t(\D y) \right\rvert \!\left \lVert \int g(x_0 - y) \bar \sigma_t(y)^2 \bar{m}_t(\D y) \right \rVert. 
	 	\end{split}
	 \end{equation}
	 As in the proof of the upper bound, note that $\lVert \int \bar \sigma_t(y)^2 \bar{m_t}(\D y) \rVert \leq 1$, by \Cref{rem:time-change}. We also see that each constant $(2 \pi \varepsilon [t^n_k + \delta])^\frac{d}{2}$ cancels with a normalising constant from $g = g^{n, k}$. The Gaussian density $g$ is Lipschitz with Lipschitz constant $L$ that can be taken independent of $t, n, k$. Together, we find that
	 \begin{equation}
	 	\begin{split}
	 		\!\left\lVert \tilde{\sigma}^{n, k}_t(x_1)^2 - \tilde{\sigma}^{n, k}_t(x_0)^2 \right \rVert & \leq 8 \bar C_K^{-2} L |x_1 - x_0|,
	 	\end{split}
	 \end{equation}
	 as required.
\end{proof}

\begin{lemma}\label{lem:brunick-shreve}
	For each $n \in \N$, there exists a probability space $(\Omega^n, \F^n, \P^n)$ on which an $\R^d$-valued Brownian motion $B^n$ and $\R^d$-valued martingales $(\hat M^{n, k}_t)_{t \in [t^n_k, t^n_{k + 1}]}$ are defined such that
	\begin{equation}
		\D \hat{M}^{n, k}_t = \tilde{\sigma}^{n, k}_t(\hat{M}^{n, k}_t) \D B^n_t \quad \text{on} \quad [t^n_k, t^n_{k + 1}],
	\end{equation}
	where $\tilde \sigma^{n, k}$ is defined as in \eqref{eq:sigma-mixed}, for each $k \in \{0, \dotsc, 2^n\}$.
	Moreover, for each $k \in \{0, \dotsc, 2^n\}$ and $t \in [t^n_k, t^n_{k +1}]$, $\Law(\hat{M}^{n, k}_t) = \Law(\tilde{M}^{n, k}_t) = \bar{m}^{n, k}_{t} \ast \gamma^{\varepsilon(t^n_k + \delta)}$, and in particular, $\Law(\hat{M}^{n, k}_{t^n_{k + 1}}) = \mu_{t^n_{k +1}} \ast \gamma^{\varepsilon (t^n_k + \delta)}$.
\end{lemma}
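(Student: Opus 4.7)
The plan is to obtain $\hat M^{n,k}$ by applying the Brunick--Shreve mimicking theorem \cite{BrSh13} to the continuous martingale $\tilde M^{n,k}$ defined in \eqref{eq:sum-independent-gaussian}, after recognising the function $\tilde\sigma^{n,k}_t(x)^2$ of \eqref{eq:sigma-mixed} as the conditional expectation of the instantaneous quadratic covariation density of $\tilde M^{n,k}$ given its current position.

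First, since $\Gamma^{n, k}$ is independent of $\F^B$ and constant in time, $\tilde M^{n, k} = \bar M^{n, k} + \Gamma^{n, k}$ is a continuous square-integrable martingale on $[t^n_k, t^n_{k+1}]$ with It\^o decomposition $\D \tilde M^{n, k}_t = \bar\sigma^{n, k}_t(\bar M^{n, k}_t) \D B_t$ and marginals $\Law(\tilde M^{n,k}_t) = \bar m^{n,k}_t \ast \gamma^{\varepsilon(t^n_k + \delta)}$. Because $\Gamma^{n, k}$ has Gaussian density $g^{n, k}$ and is independent of $\bar M^{n, k}_t$, Bayes' rule gives
\begin{equation*}
\P\!\left(\bar M^{n, k}_t \in \D y \,\big|\, \tilde M^{n, k}_t = x\right) = \frac{g^{n, k}(x - y)\, \bar m^{n, k}_t(\D y)}{\int g^{n, k}(x - y')\, \bar m^{n, k}_t(\D y')},
\end{equation*}
and direct comparison with \eqref{eq:sigma-mixed} yields $\tilde\sigma^{n,k}_t(x)^2 = \E\!\left[\bar\sigma^{n, k}_t(\bar M^{n, k}_t)^2 \,\big|\, \tilde M^{n, k}_t = x\right]$ for every $x \in \R^d$; the identification is unambiguous since $\Law(\tilde M^{n, k}_t)$ admits a smooth strictly positive density as the convolution of $\bar m^{n, k}_t$ with a nondegenerate Gaussian.

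With this identification in hand, the Brunick--Shreve mimicking theorem (e.g.\ \cite[Corollary 3.7]{BrSh13}) furnishes, on some probability space carrying a Brownian motion $B^{n, k}$, a weak solution $\hat M^{n, k}$ to $\D \hat M^{n, k}_t = \tilde\sigma^{n, k}_t(\hat M^{n, k}_t) \D B^{n, k}_t$ whose one-dimensional marginals coincide with those of $\tilde M^{n, k}$. A single probability space $(\Omega^n, \F^n, \P^n)$ supporting all of the $\hat M^{n, k}$ together with a single Brownian motion $B^n$ on $[0, 1]$ is then obtained by taking the product of the per-interval spaces and concatenating the Brownian motions across the dyadic grid. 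The main technical point is the verification of the Brunick--Shreve hypotheses: measurability and local boundedness of $\tilde\sigma^{n, k}$ are supplied by \Cref{lem:markov-mixing}, while the It\^o-isometry identity $\E \int_{t^n_k}^{t^n_{k+1}} \|\bar\sigma^{n, k}_t(\bar M^{n, k}_t)\|^2 \D t = t^n_{k+1} - t^n_k$, which is inherited from the normalisation in \Cref{rem:time-change}, provides the integrability required by the mimicking construction.
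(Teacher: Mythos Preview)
Your proposal is correct and follows essentially the same approach as the paper: both apply Brunick--Shreve \cite[Corollary 3.7]{BrSh13} to the It\^o martingale $\tilde M^{n,k}$ and then identify the resulting Markovian squared diffusion coefficient with $\tilde\sigma^{n,k}_t(x)^2$ from \eqref{eq:sigma-mixed}. Your explicit Bayes' rule computation of the conditional law of $\bar M^{n,k}_t$ given $\tilde M^{n,k}_t$ and your product-space construction for assembling the per-interval solutions onto a single $(\Omega^n,\F^n,\P^n)$ are in fact slightly more detailed than the paper's argument, which leaves both steps implicit.
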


\begin{proof}
	Fix $n \in \N$. From \eqref{eq:sum-independent-gaussian}, we have
	 \begin{equation}
	 	\D \tilde M^{n, k}_t = \bar \sigma^{n, k}_t(\bar M^{n, k}_t)\D B_t = \bar \sigma^{n, k}_t(\tilde M^{n, k}_t - \Gamma^{n,k})\D B_t.
	 \end{equation}
	 By the result on mimicking diffusions of Brunick and Shreve \cite[Corollary 3.7]{BrSh13}, there exist measurable maps $\hat \sigma^{n, k} \colon [t^n_k, t^n_{k + 1}] \times \R^d \to \R^d \times \R^d$ taking values in the set of positive semidefinite matrices such that, for any $t \in [t^n_k, t^n_{k + 1}]$,
	 \begin{equation}
	 	\hat \sigma^{n, k}_t(\tilde M^{n, k}_t)^2 = \E\!\left[\bar \sigma^{n, k}_t(\tilde M^{n, k}_t - \Gamma^{n,k})^2 \mid \tilde M^{n, k}_t\right]\!
	 \end{equation}
	 almost surely, for each $k \in \{0, \dotsc, 2^n\}$. Again by \cite[Corollary 3.7]{BrSh13}, there exists a probability space $(\Omega^n, \F^n, \P^n)$ supporting a Brownian motion $B^n$ and martingales $\hat M^{n,k}$ such that, for any $t \in [t^n_k, t^n_{k + 1}]$,
	 \begin{equation}
	 	\hat M^{n,k}_t = \hat M^{n,k}_{t^n_k} + \int_{t^n_k}^t \hat \sigma^{n, k}_s(\hat M^{n, k}_s) \D B^n_s,
	 \end{equation}
	 and $\Law(\hat M^{n, k}_t) = \Law(\tilde M^{n, k}_t)$, for each $k \in \{0, \dotsc, 2^n\}$. We conclude by computing that, for all $t \in [t^n_k, t^n_{k + 1}]$ and $\Law(\tilde M^{n,k}_t)$-almost every $x \in \R^d$,
	 \begin{equation}
	 	\hat \sigma^{n,k}_t(x)^2 = \tilde \sigma^{n, k}_t(x)^2,
	 \end{equation}
	 for $\tilde \sigma^{n,k}$ defined in \eqref{eq:sigma-mixed}.
\end{proof}

For each $n \in \N$ and $k \in \{0, \dotsc, 2^n\}$, we now have a martingale $\hat{M}^{n, k}$ on $[t^n_k, t^n_{k + 1}]$, whose squared diffusion matrix $(t, x) \mapsto \tilde \sigma^{n, k}_t(x)^2$ is bounded from above on compact sets in $[0, 1] \times \R^d$. To achieve a lower bound, we divide the interval $[t^n_k, t^n_{k + 1}]$ in half and time-change the martingale by a factor of two in the first half of the interval. In the second half of the interval, we shall simply add a Brownian motion with an appropriately scaled covariance. This gives us a second Gaussian convolution to arrive at the measure $\mu^\reg_{t^n_{k +1}} = \mu_{t^n_{k + 1}} \ast \gamma^{\varepsilon(t^n_{k +1} + \delta)}$, rather than $\mu_{t^n_{k + 1}} \ast \gamma^{\varepsilon(t^n_k + \delta)}$, at the terminal time $t^n_{k+1}$.

Define the function $(t, x) \mapsto \sigma^{n, k}_t(x)$ by
\begin{equation}\label{eq:sigma-splitting-interval}
	\sigma^{n, k}_t(x) \coloneqq
	\begin{cases}
		\sqrt{2}\; \tilde{\sigma}^{n, k}_{t^n_k + 2(t - t^n_k)}(x), & t \in [t^n_k, t^n_k + 2^{-(n+1)}),\vspace{1ex}\\
		\sqrt{2\varepsilon} \, \id, & t \in [t^n_k + 2^{-(n + 1)}, t^n_{k+1}].
	\end{cases}
\end{equation}
Take a random variable $M^{n, k}_{t^n_k}$ on $(\Omega^n, \F^n, \P^n)$ with $\Law(M^{n, k}_{t^n_k}) = \mu^\reg_{t^n_k}$ and, for $t \in [t^n_k, t^n_{k +1}]$, define
\begin{equation}
	M^{n, k}_t = M^{n, k}_{t^n_k} + \int_{t^n_k}^t \sigma^{n, k}_s(M^{n, k}_s) \D \hat B_s.
\end{equation}
Then we see that
\begin{equation}\label{eq:dyadic-law}
	\Law(M^{n, k}_{t^n_{k+1}}) = (\mu_{t^n_{k + 1}} \ast \gamma^{\varepsilon (t^n_k + \delta)}) \ast \gamma^{\varepsilon2^{-n}} = \mu_{t^n_{k + 1}} \ast \gamma^{\varepsilon (t^n_{k+1} + \delta)} = \mu^\reg_{t^n_{k + 1}},
\end{equation}
and we have the lower bound
\begin{equation}\label{eq:lower-bound}
	\sigma^{n, k}_t(x)^2 \geq 2 \varepsilon \, \id, \quad t \in [t^n_k + 2^{-(n + 1)}, t^n_{k + 1}], \; x \in \R^d.
\end{equation}

We next paste together the martingales defined on each interval. Define $\sigma^{n} \colon [0, 1] \times \R^d \to \R^{d \times d}$ by
\begin{equation}
	\sigma^{n}_t(x) \coloneqq \sum_{k = 0}^{2^n - 1} \sigma^{n, k}_t(x) \mathds{1}_{(t^n_k, t^n_{k +1}]}(t), \quad t \in [0, 1], \; x \in \R^d.
\end{equation}
We arrive at the following proposition.

\begin{proposition}\label{prop:pasting-intervals}
	For each $n \in \N$, there exists a probability space $(\Omega^n, \F^n, \P^n)$ on which there exists a Brownian motion $B^n$ and a martingale diffusion $(M^{n}_t)_{t \in [0, 1]}$ satisfying
	\begin{equation}
		\D M^{n}_t = \sigma^{n}_t(M^{n}_t) \D B_t,
	\end{equation}
	with $\Law(M^{n}_r) = \mu^\reg_r$, for all $r \in S^n$. The family $\{|M^n_1|^2 \colon \; n \in \N\}$ is uniformly integrable.
	
	Moreover, $(t, x) \mapsto \sigma^n_t(x)^2$ is locally bounded and locally Lipschitz in $x$, uniformly in $t \in [0, 1]$ and $n \in \N$, and
	\begin{equation}\label{eq:lower-bound-unif}
		\sigma^{n}_t(x)^2 \geq 2 \varepsilon \, \id,
	\end{equation}
	for any $x \in \R$ and $t \in \bigcup_{k = 0}^{2^n - 1}[t^n_k + 2^{-(n + 1)}, t^n_{k + 1}]$, $n \in \N$.
\end{proposition}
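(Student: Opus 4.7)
The plan is to paste together the dyadic-interval pieces already built in Lemmas 2.2--2.3 and then read off each claim from the previous work. Concretely, I would work on a single probability space $(\Omega^n, \F^n, \P^n)$ carrying a standard Brownian motion $B^n$ and an independent initial random variable $\xi^n \sim \mu^\reg_0$, set $M^n_0 = \xi^n$, and construct $M^n$ inductively on the intervals $[t^n_k, t^n_{k+1}]$. On the first half $[t^n_k, t^n_k + 2^{-(n+1)})$, the coefficient $\sigma^n_t(\cdot) = \sqrt{2}\,\tilde\sigma^{n, k}_{t^n_k + 2(t - t^n_k)}(\cdot)$ is locally bounded and locally Lipschitz in $x$ by Lemma \ref{lem:markov-mixing}, so the martingale produced by Brunick--Shreve (\Cref{lem:brunick-shreve}), after the deterministic time change by factor $2$, extends $M^n$ on this half-interval with respect to $B^n$; on the second half $[t^n_k + 2^{-(n+1)}, t^n_{k+1}]$, $\sigma^n \equiv \sqrt{2\varepsilon}\,\id$, so the extension is simply $M^n_t = M^n_{t^n_k + 2^{-(n+1)}} + \sqrt{2\varepsilon}\,(B^n_t - B^n_{t^n_k + 2^{-(n+1)}})$. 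The identity \eqref{eq:dyadic-law} then gives $\Law(M^n_{t^n_{k+1}}) = \mu^\reg_{t^n_{k+1}}$, closing the induction and yielding $\Law(M^n_r) = \mu^\reg_r$ for every $r \in S^n$.

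Next I would read off regularity and the lower bound. By Lemma \ref{lem:markov-mixing}, for every compact $K \subseteq \R^d$ the map $(t, x) \mapsto \tilde \sigma^{n, k}_t(x)^2$ is bounded by $C_K$ and Lipschitz in $x$ with constant $L_K$, uniformly in $t, n, k$. The time rescaling multiplies this squared coefficient by $2$, preserving uniform bounds, and the constant piece $2\varepsilon\,\id$ contributes trivially. Since $\sigma^n$ is defined piecewise in $t$ by concatenation of pieces with these uniform constants, it follows that $(t, x) \mapsto \sigma^n_t(x)^2$ is locally bounded and locally Lipschitz in $x$ uniformly in $t \in [0,1]$ and $n \in \N$. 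The lower bound \eqref{eq:lower-bound-unif} is immediate from \eqref{eq:lower-bound}, since on each interval $\sigma^n$ coincides with $\sigma^{n, k}$.

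For uniform integrability, I would observe that $1 = 2^n \cdot 2^{-n} \in S^n$, so $\Law(M^n_1) = \mu^\reg_1 = \mu_1 \ast \gamma^{\varepsilon(1 + \delta)}$ for every $n$. The measure $\mu^\reg_1$ has finite second moment (inherited from $\mu_1 \in \Pc_2(\R^d)$ and the Gaussian convolution), so $\{|M^n_1|^2 : n \in \N\}$ is a family of identically distributed integrable random variables, hence trivially uniformly integrable.

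The main obstacle I anticipate is purely technical: arranging the recursive pasting so that a single Brownian motion $B^n$ drives $M^n$ on the whole interval $[0, 1]$ and so that the initial condition of each piece coincides with the terminal value of the previous piece while still preserving the one-dimensional marginal distributions guaranteed by \Cref{lem:brunick-shreve}. This is handled by standard enlargement of the probability space and regular conditional probabilities; once the individual diffusions on each subinterval have been produced, no new analytic ingredients are needed.
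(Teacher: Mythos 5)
Your proposal is correct and follows essentially the same route as the paper: paste the pieces from \Cref{lem:brunick-shreve} interval by interval, obtain the dyadic marginals from \eqref{eq:dyadic-law}, deduce the uniform local bounds and Lipschitz constants from \Cref{lem:markov-mixing} (picking up the factor $2$ from the time change and the constant piece $\sqrt{2\varepsilon}\,\id$), get \eqref{eq:lower-bound-unif} from \eqref{eq:lower-bound}, and conclude uniform integrability from $\Law(M^n_1)=\mu^\reg_1\in\Pc_2(\R^d)$ for all $n$. The only difference is that you spell out the inductive pasting and the matching of initial conditions slightly more explicitly than the paper, which simply invokes \Cref{lem:brunick-shreve}.
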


\begin{proof}
	By \Cref{lem:brunick-shreve}, for each $n \in \N$, we can find a probability space $(\Omega^n, \F^n, \P^n)$ supporting a Brownian motion $B^n$ such that we can define a random variable $M^n_0$ with $\Law(M^n_0) = \mu^\reg_0$ and, for $t \in [0, 1]$,
	\begin{equation}
		M^n_t = M^n_0 + \int_0^t \sigma^n_s(M^n_s) \D B^n_s.
	\end{equation}
	For each dyadic $t^n_k \in S^n$, $k \in \{0, \dotsc, 2^n\}$, $n \in \N$, we have $\Law(M^{n}_{t^n_k}) = \mu^\reg_{t^n_k}$ by \eqref{eq:dyadic-law}. In particular, $\Law(M^n_1) = \mu^\reg_1 \in \Pc_2(\Omega)$, for all $n \in \N$, which implies uniform integrability of $\{|M^n_1|^2 \colon \; n \in \N\}$.
	
	Let $K \subseteq \R^d$ be a compact set and recall the bound $C_K$ and local Lipschitz constant $L_K$ from \Cref{lem:markov-mixing}. Since these constants depend only on the compact set $K$, it follows that
	\begin{equation}
		\sigma^n_t(x)^2 \leq 2 (C_K \vee \varepsilon) \id, x \in K,
	\end{equation}
	uniformly in $t \in [0, 1]$ and $n \in \N$, and $(t, x) \mapsto \sigma^n_t(x)$ is locally Lipschitz in $x$ with Lipschitz constant $2 L_K$ on the set $K$, uniformly in $t \in [0, 1]$ and $n \in \N$. Finally, the lower bound \eqref{eq:lower-bound-unif} follows immediately from \eqref{eq:lower-bound}.
	\end{proof}

The final step is to find a limiting martingale that mimics the peacock $\mu^\reg$ at every time $t \in [0, 1]$. In \Cref{sec:compactness-Ito}, we will prove a result on compactness of It\^o diffusions with respect to convergence in finite dimensional distributions, which is tailor-made for the present application. This plays an analogous role to compactness of Lipschitz Markov processes in the one-dimensional setting of \cite{Lo09}. We now use our compactness result to allow us to pass to a limit and complete the proof of \Cref{thm:kellerer-regularized}.

\begin{proof}[Proof of \Cref{thm:kellerer-regularized} (admitting \Cref{thm:fdd_convergence})]
	Take the sequence of functions $\sigma^n\colon [0, 1] \times \R^d \to \R^{d \times d}$ and martingales $M^n$, $n \in \N$, to be as in \Cref{prop:pasting-intervals}. Then \Crefrange{it:assumption_1}{it:assumption_5} are satisfied. By \Cref{thm:fdd_convergence}, there exists a function $(t, x) \mapsto \sigma_t(x)$, such that $\sigma^2$ is locally Lipschitz in $x$, uniformly in $t \in [0, 1]$, and $M^n$ converges in finite dimensional distributions to $M$, the unique strong solution of the SDE $\D M_t = \sigma_t(M_t) \D B_t$, with $\Law(M_0) = \mu^\reg_0$. For each $n \in \N$, we have that $\Law(M^n_t) = \mu^\reg_t$, for any dyadic $t \in S^n$. Therefore, taking the limit in finite dimensional distributions, we have $\Law(M_t) = \mu^\reg_t$ for all $t \in [0, 1]$. That is, $M$ is a mimicking martingale for the regularized peacock $\mu^\reg$.
	
	From the conclusion of \Cref{thm:fdd_convergence}, we also obtain the required bounds on $\sigma^2$. That is, for each $x \in \R^d$, there exist constants $c_x, C_x > 0$ such that, uniformly in $t \in [0, 1]$, we have
	\begin{equation}\label{eq:sigma-bounds}
		c_x \, \id \leq \sigma_t(x)^2 \leq C_x \, \id.
	\end{equation}
	It remains to verify the Feller property of $M$. The law of $(M_t)_{t \in [0, 1]}$ is a solution of the associated martingale problem of Stroock and Varadhan \cite[Chapter 6]{StVa79}. Moreover, we proved above that the diffusion coefficient $\sigma$ satisfies the bounds \eqref{eq:sigma-bounds}, and that $\sigma^2$ is locally Lipschitz in $x$, uniformly in $t \in [0, 1]$. Under these conditions, \cite[Theorem 10.1.3]{StVa79} implies that the martingale problem admits at most one solution. We now have that the martingale problem is well posed and, by applying \cite[Corollary 10.1.4]{StVa79}, the unique solution has the Feller property. We conclude that the martingale $(M_t)_{t \in [0, 1]}$ also has the strong Markov property.
\end{proof}

\section{Non-uniqueness}\label{sec:non-uniqueness}
We shall show that, even for the simple example of a two-dimensional Brownian motion, uniqueness does not hold in the class of continuous strong Markov mimicking martingales. In other words, the one-dimensional uniqueness result of Lowther \cite{Lo08b} cannot be extended directly to higher dimensions.

Considering the problem of mimicking the marginals of a standard two-dimensional Brownian motion, the Brownian motion itself is of course a continuous strong Markov martingale with the required marginals. In order to disprove uniqueness, we seek another mimicking process with these properties. We will thus construct a two-dimensional continuous \emph{fake Brownian motion} that is strongly Markovian.

\begin{proposition}
	For every peacock $\mu = (\mu_t)_{t \in [0, 1]}$ on $\R^2$ defined as in \Cref{thm:non-uniqueness}, there exist two distinct continuous strong Markov martingale diffusions mimicking $\mu$.
\end{proposition}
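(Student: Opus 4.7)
The first of the two continuous strong Markov mimicking diffusions is Brownian motion itself (take $\sigma_t \equiv I$ in \eqref{eq:mimicking-m}); the plan is therefore to exhibit a second diffusion $M$ of the form \eqref{eq:mimicking-m} with $\Law(M_t) = \mu_t$ for every $t \in [0, 1]$ but with $M$ not a Brownian motion. Following \cite{Ro20, CoRo22}, I would exploit the rotational symmetry of the target peacock by looking for a coefficient $\sigma_t$ such that the squared coefficient $\Sigma_t(x) := \sigma_t(x)\sigma_t(x)^\top$ is rotationally equivariant, $\Sigma_t(Rx) = R\,\Sigma_t(x)\, R^\top$ for every $R \in SO(2)$. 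In dimension two every such symmetric positive semidefinite matrix takes the form
$$\Sigma_t(x) = \alpha_t(|x|)\, I + \beta_t(|x|)\, \tfrac{xx^\top}{|x|^2},$$
with $\alpha_t \geq 0$ and $\alpha_t + \beta_t \geq 0$. Combined with rotational invariance of the initial law $\eta$, equivariance of $\Sigma_t$ propagates rotational invariance of $\Law(M_t)$ to every $t$, so it suffices to arrange that the radial marginal $\Law(|M_t|)$ matches $\Law(|B_t|)$.

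Next, by It\^o's formula, $R_t := |M_t|$ is a one-dimensional (possibly time-inhomogeneous) Markov diffusion with drift $\alpha_t(R_t)/(2R_t)$ and squared volatility $\alpha_t(R_t) + \beta_t(R_t)$. Imposing that the Fokker--Planck equation for $R_t$ agrees with that of the two-dimensional Bessel process at every $(t, r)$ yields the single scalar relation
$$\tfrac{1}{2}\partial_{rr}\!\bigl((\alpha_t+\beta_t-1)\, p\bigr) = \partial_r\!\Bigl(\tfrac{\alpha_t-1}{2r}\, p\Bigr),$$
where $p(t, r)$ denotes the radial density of $|B_t|$. This leaves a one-parameter family of admissible pairs $(\alpha_t, \beta_t)$, of which the trivial element $(\alpha_t, \beta_t) \equiv (1, 0)$ recovers Brownian motion. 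I would then select a non-trivial smooth pair $(\alpha_t, \beta_t) \not\equiv (1, 0)$ from this family, chosen so that $\Sigma_t$ is locally Lipschitz in $x$ uniformly in $t \in [0, 1]$ and satisfies $c_K I \leq \Sigma_t(x) \leq C_K I$ on every compact $K \subset \R^2$, and invoke \cite[Theorem 10.1.3 and Corollary 10.1.4]{StVa79} to conclude that the SDE \eqref{eq:mimicking-m} admits a unique solution with the strong Markov and Feller properties. An explicit such choice is carried out in \cite{Ro20, CoRo22}, yielding a continuous strong Markov martingale It\^o diffusion $M$ with $\Law(M_t) = \mu_t$.

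Finally, to distinguish $M$ from Brownian motion, I would note that $d\langle M \rangle_t = \Sigma_t(M_t)\, dt$ and this differs from $I\, dt$ on a set of positive probability by the choice $(\alpha_t, \beta_t) \not\equiv (1, 0)$, so L\'evy's characterisation excludes $M$ from being a Brownian motion. The main obstacle will be producing a non-trivial $(\alpha_t, \beta_t)$ that simultaneously satisfies the Fokker--Planck identity, the required regularity and the uniform ellipticity bounds for every $t$; this careful construction is the technical heart of the argument and is the content of \cite{Ro20, CoRo22}.
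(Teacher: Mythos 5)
Your overall strategy --- keep Brownian motion as the first mimicking process and construct a second, rotationally symmetric diffusion whose radius is distributed as a two-dimensional Bessel process --- is the same as the paper's. However, the step you defer to \cite{Ro20, CoRo22}, namely producing a non-trivial pair $(\alpha_t,\beta_t)$ solving your Fokker--Planck identity while keeping $\Sigma_t$ locally Lipschitz and uniformly elliptic on compacts, is \emph{not} carried out there, and the example those papers (and this one) actually use does not fit your framework. The paper's second process solves $\D M_t = \frac{1}{|M_t|}(M_t + M_t^\perp)\,\D W_t$ with $W$ a \emph{one-dimensional} Brownian motion, so $\sigma\sigma^\top(x) = \frac{(x+x^\perp)(x+x^\perp)^\top}{|x|^2}$ is rank one (eigenvalues $2$ and $0$): it is degenerate, not uniformly elliptic, and the Stroock--Varadhan well-posedness results you invoke do not apply to it. Well-posedness is instead obtained from the fact that solutions almost surely never hit the origin \cite[Proposition 3.2]{CoRo22}, combined with Lipschitz continuity of the coefficient away from the origin.

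There is also a structural error in your parametrization: $\Sigma_t(x)=\alpha_t(|x|)\,\id + \beta_t(|x|)\,xx^\top/|x|^2$ describes only the fields that are \emph{diagonal} in the orthonormal frame $\{x/|x|,\,x^\perp/|x|\}$; the general rotationally equivariant symmetric field also contains a cross term $c_t(|x|)\,\frac{x(x^\perp)^\top + x^\perp x^\top}{|x|^2}$. The paper's coefficient equals $\id + \frac{x(x^\perp)^\top + x^\perp x^\top}{|x|^2}$, i.e.\ it lies entirely in the part of the family you discarded. So as written your argument has a genuine gap: the existence of an admissible non-trivial $(\alpha_t,\beta_t)$ in your restricted, uniformly elliptic class is asserted but neither proved nor available in the cited references (one would have to solve your first-order relation in $r$ for $\alpha$ given $\beta$ and verify positivity, ellipticity and regularity down to $r=0$, where $xx^\top/|x|^2$ is discontinuous unless $\beta$ vanishes appropriately). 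The repair is to use the explicit degenerate SDE \eqref{eq:sde-symmetric}: its radius is a Bessel(2) process by \cite[Proposition 3.2]{CoRo22}, rotational invariance of $\eta$ plus equivariance of the coefficient give rotationally invariant marginals, hence $\Law(M_t)=\mu_t$ for all $t$, and $M$ is not a Brownian motion because both components are driven by a single scalar noise (equivalently, $\D\langle M\rangle_t \neq \id\,\D t$).
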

\begin{proof}
	Let $B$ be a standard $2$-dimensional Brownian motion started in some rotationally invariant law $\eta \in \mathcal{P}_2(\R^2) \setminus \{0\}$, and write $(\mu_t)_{t \in [0, 1]}$ for its marginals. Then $B$ is a continuous strong Markov martingale mimicking $(\mu_t)_{t \in [0, 1]}$. We now construct a continuous strong Markov martingale $M$ that mimics $(\mu_t)_{t \in [0, 1]}$ and is not itself a Brownian motion.
	
	For any $x \in \R^2$, let us denote $x^\perp \coloneqq (-x_2, x_1)^\top$, so that $x \cdot x^\perp = 0$ and $\lvert x \rvert = \lvert x^\perp \rvert$. Let $W$ be a standard $\R$-valued Brownian motion and consider the SDE
	\begin{equation}\label{eq:sde-symmetric}
		\D M_t = \frac{1}{\lvert M_t \rvert} (M_t + M^\perp_t) \D W_t; \quad \Law(M_0) = \eta.
	\end{equation}
	It is shown in \cite[Proposition 3.2]{CoRo22} that any solution of \eqref{eq:sde-symmetric} almost surely does not hit the origin. Therefore, by applying standard arguments for SDEs with Lipschitz coefficients, one can show that there exists a unique strong solution $M$ of this SDE that is a continuous strong Markov martingale. A simulated trajectory of $M$ is shown in \Cref{subfig:fake-bm}.
	
	 By \cite[Proposition 3.2]{CoRo22} again, the radius of $M$, denoted by $R_t = \lvert M_t \rvert$ for all $t \geq 0$, is a $2$-dimensional Bessel process satisfying
	\begin{equation}
		R_t = W_t + \frac{1}{2 R_t}\D t, \quad t > 0; \quad \Law(R_0) = \Law(\lvert M_0\rvert).
	\end{equation}
	Hence the radius of $M$ coincides with the radius of the $2$-dimensional Brownian motion $B$ in law; see, e.g.~\cite[Chapter XI]{ReYo99}. Moreover, the marginals of both the processes $M$ and $B$ have rotational symmetry. Hence we conclude that these marginals coincide. However, we can see that $M$ is not itself a $2$-dimensional Brownian motion, since the components of $M$ in the two coordinate directions are not independent. We have thus shown that there exist at least two distinct continuous strong Markov martingales that mimic the marginals $(\mu_t)_{t \in [0, 1]}$.
\end{proof}

\begin{remark}
	Note that $M$ solves the SDE \eqref{eq:sde-symmetric} if and only if the time-changed process $(X^\lambda_t)_{t \in [0, 1]} \coloneqq (M_{\lambda^2 t})_{t \in [0, 1]}$ solves
	\begin{equation}\label{eq:sde-lambda}
		\D X^\lambda_t = \frac{1}{\lvert X^\lambda_t \rvert}(\lambda X^\lambda_t + \sqrt{1 - \lambda^2} (X^\lambda_t)^\perp) \D W_t; \quad X_0 = x_0,
	\end{equation}
	with $\lambda = \frac{\sqrt{2}}{2}$. The SDE \eqref{eq:sde-lambda} with $\lambda \in [0, 1]$ is studied by Cox and Robinson \cite{CoRo21, CoRo22} and by Robinson \cite{Ro20}. For $\lambda = 1$, the martingale solving \eqref{eq:sde-lambda} acts as a one-dimensional Brownian motion on a fixed line through the origin --- see \Cref{subfig:rad}. For $\lambda = 0$, the martingale follows what is dubbed \emph{tangential motion} in \cite{CoRo21, CoRo22}. In this case, the process moves on a tangent to its current position, increasing the radius of the process deterministically --- see \Cref{subfig:tang}. Such a martingale already appeared in \cite{Fe18} and \cite{LaRu20} in the context of stochastic portfolio theory. Cox and Robinson \cite[Theorem 1.1]{CoRo22} showed that there is no strong solution of \eqref{eq:sde-lambda} with $\lambda = 0$ started from the origin, i.e.~$\eta = \delta_0$, drawing parallels with famous one-dimensional example of Tsirelson \cite{Ts76} and the circular Brownian motion of \'Emery and Schachermayer \cite{EmSc99}. In fact \cite[Theorem 1.2]{CoRo22} also shows that there is no strong solution of the SDE \eqref{eq:sde-lambda} started from the origin for any $\lambda \in [0, 1)$.
	 
	 We will repeatedly refer to the SDE \eqref{eq:sde-lambda} with $\lambda = 0$ in the examples of \Cref{sec:necessity} below.
	 \end{remark}

\begin{figure}[h]
	\begin{subfigure}{0.3\textwidth}
		\centering
		\includegraphics[width = \textwidth]{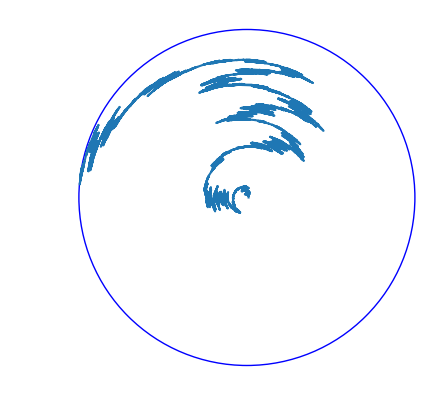}\subcaption{$\lambda = 0$,}\label{subfig:tang}
	\end{subfigure}
	\begin{subfigure}{0.3\textwidth}
		\centering
		\includegraphics[width = 0.95\textwidth]{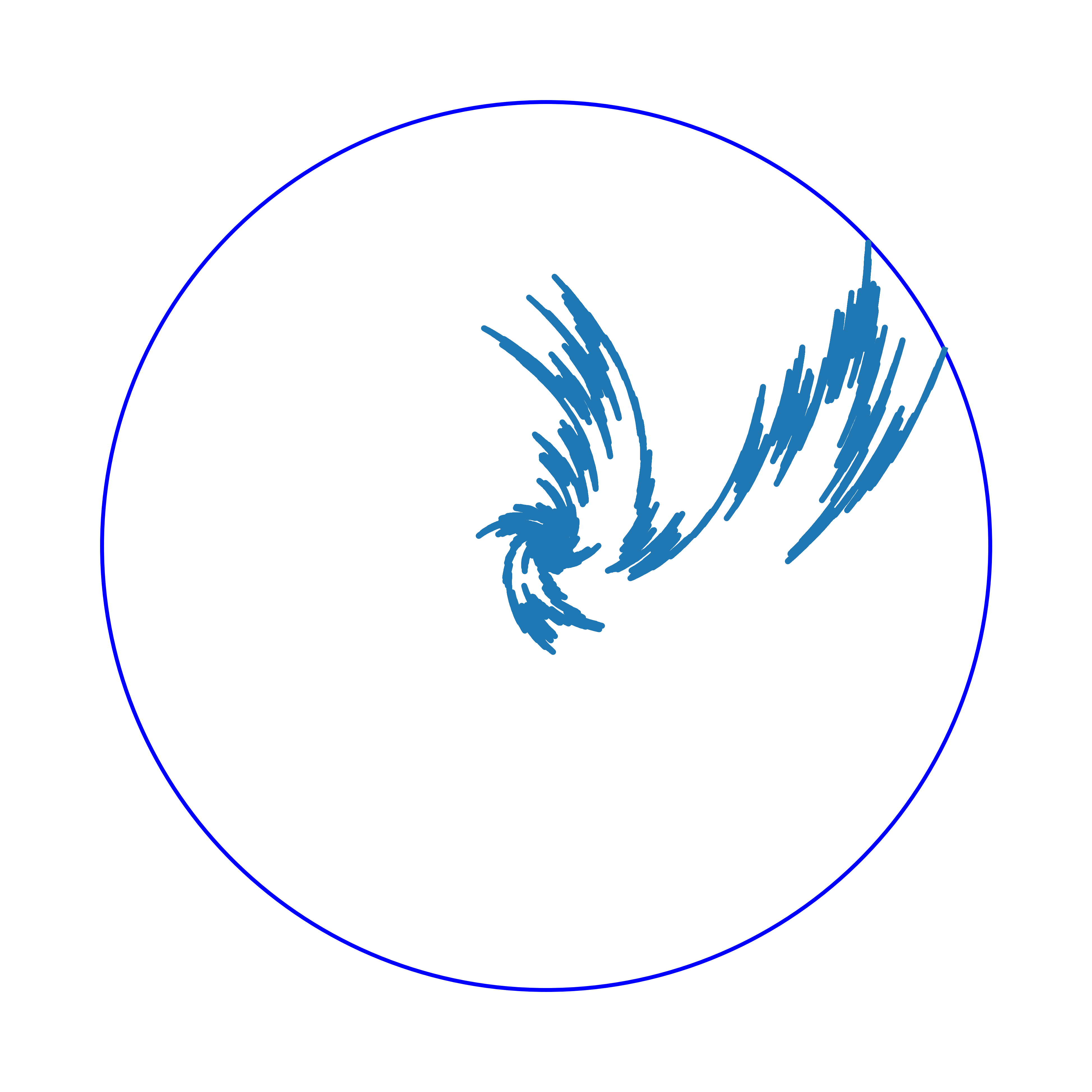}	\subcaption{fake Brownian motion,}\label{subfig:fake-bm}
	\end{subfigure}
	\begin{subfigure}{0.3\textwidth}
		\centering
		\includegraphics[width = \textwidth]{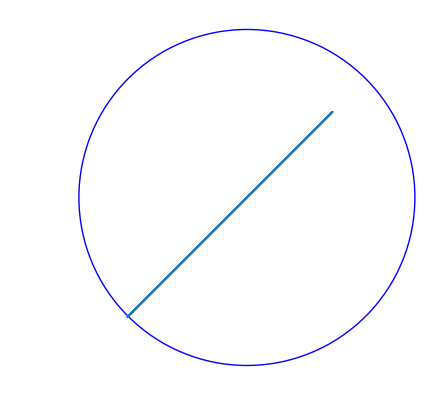}	\subcaption{$\lambda = 1$.}\label{subfig:rad}
	\end{subfigure}
	\caption{Simulations of the solution $X^\lambda$ of \eqref{eq:sde-lambda}, up to the first exit of a ball, for different values of $\lambda$. \Cref{subfig:tang} and \Cref{subfig:rad} show the extreme behaviours within the class of martingales $\{X^\lambda\colon \; \lambda \in [0, 1]\}$ (as already appeared in \cite{CoRo21}). \Cref{subfig:fake-bm} shows the midpoint between these cases, where we set $\lambda = \frac{\sqrt{2}}{2}$ then rescale time so that the martingale mimics the marginals of a Brownian motion.}\label{fig:lambda}
\end{figure}

\section{Necessity of regularization}\label{sec:necessity}

In this section, we will construct a series of counterexamples, showing that a mimicking Markov martingale may not exist without the regularization of \Cref{thm:kellerer-regularized}. We present the examples in increasing order of complexity, first showing that there may not exist a continuous mimicking Markov martingale. We then remove the continuity assumption, and finally add some (partial) regularization, in both cases showing that mimicking Markov martingales may not exist.

The following examples build on the SDE \eqref{eq:sde-lambda}, started from the origin, with $\lambda = 0$; i.e.
\begin{equation}\label{eq:cbm}
	 \D X_t = \frac{1}{|X_t|} X^\perp_t \D W_t, \quad X_0 = 0.
\end{equation}
We recall some important properties of \eqref{eq:cbm}.

\begin{remark}\label{rem:cbm}
	There exists a weak solution of \eqref{eq:cbm} by \cite[Theorem 4.3]{LaRu20}. Moreover, \cite[Theorem 1.1]{CoRo22} shows that, at any time $t \in (0, 1]$, the law of a weak solution $X$ is a uniform measure on the circle of radius $\sqrt{t}$, and so uniqueness in law holds for \eqref{eq:cbm}. In particular, a weak solution $X$ has \emph{deterministically increasing} radius
	\begin{equation}\label{eq:radius}
		|X_t| = \sqrt{t}, \quad t \in [0, 1].
	\end{equation}
\end{remark}

We construct each of the below examples\footnote{We thank Nicolas Juillet who suggested similar examples to the third named author.}  on $\R^4 \equiv \X^1 \times \X^2$, where $\X^1, \X^2$ are copies of $\R^2$. 
We also denote $S^1_t \coloneqq \{(x_1, x_2) \in \X^1 \colon \; x_1^2 + x_2^2 = t\} \times \{(0, 0)\}$, $S^2_t \coloneqq \{(0, 0)\} \times \{(x_3, x_4) \in \X^2 \colon \; x_1^2 + x_2^2 = t\}$, and $S^i \coloneqq \cup_{t \ge 0} S^i_t$ for $i = 1,2$.
Note that $S^1$ and $S^2$ only intersect at the origin.

We emphasise that, throughout the following sections, the \emph{usual conditions} of right-continuity and completeness are in force for all filtrations that we consider, and $\sigma(U)$ denotes the completion of the sigma-algebra generated by a given random variable $U$; see \Cref{rem:usual-conditions} for a discussion of these conditions.

\subsection{The continuous case}

Fix a probability space $(\Omega, \F, \P)$ on which we can define two independent copies $M^1, M^2$ of the weak solution of \eqref{eq:cbm}, as well as an independent Bernoulli($0.5$) random variable $\xi$.
Now define a process $X$ taking values in $\R^4$ by
\begin{equation}\label{eq:r4-simple}
	X_t = \begin{cases}
		(M^1_t, 0), \; & \xi = 0,\\
		(0, M^2_t), & \xi = 1,
	\end{cases}\quad t \in [0, 1],
\end{equation}
and write $\mu_t = \Law(X_t)$. Thus, following \Cref{rem:cbm}, the measure $\mu_t$ is a uniform measure on $S^1_t \cup S^2_t \subset \R^4$. Note that the process $X$ is a martingale, and so $\mu$ is a peacock.

\begin{proposition}\label{prop:ex-continuous}
	There exists a peacock $\mu$ on $\R^4$ such that there does not exist any continuous Markov process mimicking $\mu$.
\end{proposition}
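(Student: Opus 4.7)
The plan is to argue by contradiction: assume $Y = (Y_t)_{t \in [0,1]}$ is a continuous Markov process on some filtered probability space $(\Omega, \F, (\F_t), \P)$ satisfying the usual conditions, with $\Law(Y_t) = \mu_t$ for all $t$. The key geometric observation is that $\mu_t$ is supported on the disjoint circles $S^1_t$ and $S^2_t$ (for $t > 0$) which lie in orthogonal $2$-planes, so continuity should force the trajectory to remain inside a single one of the two planes $\X^1, \X^2$ once it leaves the origin. The arm is thus selected immediately after time $0$, making it $\F^Y_{0+}$-measurable, which will clash with the deterministic start $Y_0 = 0$ via the Markov property at $t = 0$.

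For the pathwise reduction, let $\rho_1(t)$ and $\rho_2(t)$ denote the squared norms of the projections of $Y_t$ onto $\X^1$ and $\X^2$ respectively; these are continuous in $t$. Since $\mu_t$ is supported on $S^1_t \cup S^2_t$, for each fixed $t$ we have $\rho_1(t) + \rho_2(t) = t$ and $\rho_1(t)\rho_2(t) = 0$ almost surely. Picking a countable dense $D \subseteq [0, 1]$ and discarding the union of the relevant null sets, both identities hold simultaneously for all $t \in D$, and continuity extends them to all $t \in [0, 1]$. Hence, for every $t \in (0, 1]$, exactly one of $\rho_1(t), \rho_2(t)$ is strictly positive; the open sets $\{t \in (0,1] : \rho_i(t) > 0\}$, $i = 1, 2$, partition the connected interval $(0, 1]$, so one of them is empty. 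This produces a $\{1, 2\}$-valued random variable $\Pi$ with $Y_t \in S^\Pi$ for all $t \in (0, 1]$ almost surely, and the marginal at $t = 1$ forces $\P(\Pi = 1) = \P(\Pi = 2) = \tfrac{1}{2}$. Since $\{\Pi = 1\}$ agrees up to null sets with $\{\rho_1(t) > 0\}$ for every $t > 0$, we get $\Pi \in \F^Y_t$ for all $t > 0$, hence $\Pi \in \F^Y_{0+} = \F^Y_0$ under the usual conditions.

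To close the argument, fix any $u \in (0, 1]$. The Markov property at time $0$, together with $Y_0 = 0$ a.s., gives
\begin{equation*}
\P(Y_u \in S^1_u \mid \F^Y_0) = \P(Y_u \in S^1_u \mid Y_0) = \mu_u(S^1_u) = \tfrac{1}{2}
\end{equation*}
almost surely. On the other hand, $\{Y_u \in S^1_u\} = \{\Pi = 1\}$ a.s.\ and $\Pi$ is $\F^Y_0$-measurable, so the left-hand side equals $\mathds{1}_{\{\Pi = 1\}}$, a random variable taking only the values $0$ and $1$. This contradicts the deterministic value $\tfrac{1}{2}$. The only delicate step is the upgrade from the pointwise-in-$t$ support statement $Y_t \in S^1_t \cup S^2_t$ a.s.\ to the pathwise statement governing the whole trajectory; this is handled cleanly via the continuous scalar processes $\rho_1, \rho_2$ combined with the connectedness of $(0, 1]$, after which the contradiction is a direct consequence of the Markov property and the right-continuity of the augmented filtration.
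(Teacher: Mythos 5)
Your proposal is correct and follows essentially the same route as the paper: continuity forces the trajectory to commit to one of the two orthogonal circles' planes, the resulting arm-selection event lies in $\F^Y_{0+}=\F^Y_0$ but has probability $\tfrac12$ and hence cannot be $\sigma(Y_0)$-measurable, contradicting Markovianity at time $0$. The only difference is that you spell out the pathwise support statement (via the continuous processes $\rho_1,\rho_2$ and connectedness of $(0,1]$) which the paper asserts directly from continuity of the paths; this is a welcome elaboration, not a different argument.
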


\begin{proof}
	Let $(\mu_t)_{t \in [0, 1]} = (\Law(X_t))_{t \in [0, 1]}$, where $X$ is defined by \eqref{eq:r4-simple}. Suppose that there exists a continuous process $Y$ that mimics $\mu$. We will show that $Y$ is not Markovian at time $0$.
	
	By definition of the peacock $\mu$ and continuity of the paths, we have
	\begin{equation}
		\P\!\left[ \{ Y_t \in S^1_t, \; \forall t \in [0, 1] \} \cup \{  Y_t \in S^2_t, \; \forall t \in [0, 1] \} \right]\! = 1.
	\end{equation}
	In particular, for $t_0 > 0$ the events $A^1 \coloneqq \{ Y_t \in S_t^1, \; \forall t \in [0,1]\}$ and $\{ Y_{t_0} \in S^1_{t_0} \} \in \F^Y_{t_0}$ differ only by a null set, hence $\P[A^1] = 1/2$.
    On the one hand, we have by completeness and right-continuity of the filtration that $A^1 \in \F^Y_0$.
    On the other hand, $A^1$ can not be in $\sigma(Y_0)$ since the former has probability $1/2$ whereas the latter is the completion of the trivial sigma-algebra.
    Now define a function $f\colon \R^4 \to \R$ by $f(x) = \sqrt{x_3^2 + x_4^2}$. Then, for any $t \in (0, 1)$ we find
	\begin{equation}
		\frac{\sqrt{t}}{2} = \E [f(Y_t) \mid \sigma(Y_0)] \neq \E[f(Y_t) \mid \F^Y_0] =
		\begin{cases}
			0 & \text{on}\; A^1,\\
			\sqrt{t} & \text{on} \; \Omega \setminus A^1.
		\end{cases}
	\end{equation}
	 We conclude that $Y$ is not Markovian at time $0$.
\end{proof}

\begin{remark}\label{rem:initial-law}
	We clarify that the assumption that the peacock $\mu$ starts in $\mu_0 = \delta_0$ does not play a fundamental role in \Cref{prop:ex-continuous}, nor will it in \Cref{prop:ex-discontinuous,prop:ex-regularized}. Indeed, let us embed the space $\R^4 = \X^1 \times \X^2$ into $\R^5 = \R \times \X^1 \times \X^2$. Let $\bar X_0$ be any square-integrable random variable taking values in $\R \times \{(0, 0)\} \times \{(0, 0)\}$. Setting $\bar X_t = \bar X_0 + X_t$, for $t \in [0, 1]$, where $X$ is defined as in \eqref{eq:r4-simple} or \eqref{eq:r4-dyadic}, we obtain examples with the same features as required in \Cref{prop:ex-continuous,prop:ex-discontinuous,prop:ex-regularized}, without imposing that the initial law of the peacock is a Dirac measure.
\end{remark}

\begin{remark}\label{rem:ex-cadlag}
		For the peacock $\mu$ defined via \eqref{eq:r4-simple}, the continuity assumption in \Cref{prop:ex-continuous} is required in order to show non-existence of mimicking Markov processes.
        In the following we construct a c\`adl\`ag strong Markov martingale $X$ that mimics the peacock $\mu$. 
        The process behaves similarly to a compensated Poisson process:
        at time $t$ a particle $X_t$ starting either in $x = (x_1,x_2,0,0) \in \R^4$ or $x = (0,0,x_3,x_4) \in \R^4$ drifts in the direction $x$ with speed $|x|^2$.
        This drift is compensated with jumps of rate $\frac{1}{2|x|^2}$ to a uniform distribution on $S^2_{|x|^2}$ (resp.\ $S^1_{|x|^2}$).
        For $t \in \R_+$ define the rate function $\lambda$ and its anti-derivative $\Lambda$ by
        \begin{equation}
            \lambda_t \coloneqq \frac1{2t} \quad\mbox{and}\quad \Lambda_t \coloneqq \frac12 \log(t).
        \end{equation}
        To construct the process $X$, we first consider the peacock $(\mu_t)_{t \in [t_0,1]}$ where $t_0 \in (0,1)$ and define a mimicking process $X^{t_0}$.
        To this end, let $(\xi_n)_{n \in \N}$, $(U_n)_{n \in \N}$, and $(V_n)_{n \in \N}$ be families of independent random variables such that $\xi_n \sim \exp(1)$, $U_n \sim \text{Unif}(S^1_1)$, and $V_n \sim \text{Unif}(S^2_1)$.
        Given that $X_{t_0}^{t_0} = x \in S^1_{|x|^2}$, $U_0 \coloneqq x / |x|$, and $t \in (t_0,1]$, we set
        \begin{equation}
            X_t^{t_0} = \sqrt{t}\sum_{n \in \N \cup \{ 0 \} } \mathbbm 1_{\{\sum_{k = 1}^n \xi_k \le \Lambda_t - \Lambda_{t_0} < \sum_{k = 1}^{n + 1} \xi_k\}} \!\left( \mathbbm 1_{2 \mathbb Z}(n) V_n + \mathbbm 1_{2\mathbb Z + 1}(n) U_n \right)\!,
        \end{equation}
        where we use the convention that the sum over an empty index set is $-\infty$.
        Similarly, when starting in $S^2_{|x|^2}$, we define $X_t^{t_0}$ analogously to the displayed equation above but with the roles of odd and even integers reversed.
        It is straightforward to show that this process is a Feller process and thus has the strong Markov property.
        
        By \Cref{lem:app-rem-1}, which we postpone to \Cref{app:B}, $X^{t_0}$ mimics $(\mu_t)_{t \in [t_0, 1]}$ and there exists a process $X$ with the property that, for any $t_1 \in (0, 1]$,
         \begin{equation}
            \label{eq:rem.comp_Poisson.consistent}
            (X_t)_{t \in [t_1,1]} \sim (X^{t_1}_t)_{t \in [t_1,1]}.
        \end{equation}
        Hence $X$ mimics $\mu$. We deduce from \eqref{eq:rem.comp_Poisson.consistent} and the strong Markov property of $X^{t_1}$ that $X$ has the strong Markov property for stopping times $\tau$ with $\tau \ge t_1$ and $t_1 > 0$. By \Cref{lem:app-rem-2}, we also have that $X$ is Markovian at time $0$. Hence, by Lemma \ref{lem:strong_Markov} below, $X$ is a strong Markov process.
\end{remark}

\begin{remark}\label{rem:usual-conditions}
	Two referees, to whom we are indebted for their excellent remarks, have asked us to comment on the assumption that the usual conditions of right-continuity and completeness hold for the filtration $(\F^Y_t)_{t \in [0, 1]}$ considered in \Cref{prop:ex-continuous}. This example depends crucially on the assumption of right-continuity. Indeed, if we consider the natural filtration $(\F^0_t)_{t \in [0, 1]}$ generated by the process $Y$ constructed in the proof of \Cref{prop:ex-continuous} without any augmentation, the sigma-algebra $\F^0_0$ is trivial and  the process $Y$ is Markovian with respect to $\F^0$.
		
	To address this issue, we start by referring to the one-dimensional result of Kellerer \cite{Ke72}. As noted in the introduction, the set of Markov martingales is not closed with respect to convergence in finite dimensional distributions. The original proof of \cite[Theorem 3]{Ke72} therefore uses a stronger notion, which has subsequently been termed the \emph{Lipschitz Markov} property. In fact Kellerer shows the existence of a one-dimensional Lipschitz Markov mimicking martingale. Lowther \cite[Lemma 4.5]{Lo09} shows that the set of c\`adl\`ag Lipschitz Markov processes is closed with respect to convergence in finite dimensional distributions, and \cite[Lemma 4.2]{Lo09} shows that a c\`adl\`ag Lipschitz Markov process has the strong Markov property with respect to the augmented filtration. In fact, as observed in \cite[Section 3.3]{BeHuSt16}, the Lipschitz Markov property implies the Feller property. Then, for a right continuous process, strong Markovianity with respect to the augmented filtration follows from Liggett \cite[Theorem 3.3]{Li10}. Moreover, the completed natural filtration of a Feller process is right-continuous. The notion of Lipschitz Markov martingales has been used in all known proofs of the one-dimensional Kellerer theorem; see, e.g.~\cite{BeHuSt16,HiRoYo14}. In the higher dimensional setting, the Lipschitz Markov property no longer holds in general. However, we argue that it is still natural to consider Feller processes. \Cref{thm:kellerer-regularized} shows the existence of a mimicking martingale that is a Feller process in the regularised case, and \Cref{prop:ex-continuous,prop:ex-discontinuous,prop:ex-regularized} give examples for which, in particular, no Feller mimicking martingale exists.
	
	We next refer to the use of the usual conditions in the literature, which are now a standard assumption; see, e.g.~\cite{RoWi00}. The usual conditions, or \emph{conditions habituelles}, of completeness and right-continuity of filtrations are fundamental to the \emph{th\'eorie g\'en\'erale} of semi-martingales developed by the Strasbourg school; see Dellacherie and Meyer \cite{DMa,DMb}. In the context of Markov processes, Dellacherie and Meyer \cite{DMd} work predominantly with processes satisfying the strong Markov and Feller properties, and they once again work under the usual conditions. These conditions were already present in the work of Blumenthal and Getoor \cite{BlGe68} and Getoor \cite{Ge75}, where the authors consider strong Markov \emph{right processes}; see also the later references of Dellacherie and Meyer \cite{DMe}, Liggett \cite{Li10}, and Sharpe \cite{Sh88}.
	
	To give additional motivation for the use of the usual conditions, we finally put ourselves into the following financial context. Suppose that $(X_t)_{t \in [0, 1]}$ models the price of a stock (or four stocks, as in \Cref{prop:ex-continuous}) and an economic agent trades on the stock using predictable trading strategies, as described, for example, in the books \cite{DeSc06,KaSh98}. Following the paradigm of \emph{no arbitrage}, the process $X$ must be a semi-martingale by \cite[Theorem 9.7.2]{DeSc06}. This places us in the setting of \cite{DMb} described above, where the usual conditions are in force. An important role is played by the available information that is encoded in the filtration. Consider, for example, the announcement of some economic statistics that is revealed at a fixed time $t$ (corresponding to $t = 0$ in the example of \Cref{prop:ex-continuous}). We claim that the proper interpretation is that the agent may use the knowledge of these statistics from time $t$ on, and not only from time $t + \varepsilon$ for each $\varepsilon > 0$. This point of view corresponds to the right continuity of the filtration. One may ask whether this gives too much information to the agent, since the stock prices $X_t$ may be more favourable than $X_{t + \varepsilon}$ for arbitrary $\varepsilon > 0$. However, this difference in prices is negligible as $\varepsilon \to 0$, since semi-martingales are defined to be right-continuous; see \cite{DMb}. Thus, the usual conditions fit well in this financial setting.
	
	Nevertheless, for purely mathematical interest, one could and should of course ask whether it is possible to construct counterexamples of peacocks for which there exists no mimicking Markov martingale, when the Markov property is defined with respect to the raw filtration. We do not have an answer to this question, and we leave this as an open problem for future research.
\end{remark}

\subsection{The general case}
We now generalize the example given in \Cref{prop:ex-continuous} to find a peacock for which there is no mimicking Markov martingale, even if we allow for jumps. We construct such a peacock by modifying the previous example in the following way. Let us partition the time interval $[0, 1]$ into the intervals $I_1 \coloneqq \bigcup_{\substack{n \in \N\\n\;\text{even}}} [2^{-(n+1)}, 2^{-n}]$, $I_2 \coloneqq \bigcup_{\substack{n \in \N\\n\;\text{odd}}} [2^{-(n+1)}, 2^{-n}]$. Define the functions
	\begin{equation}
		\begin{split}
			a_1(t) & = \int_0^t \mathds{1}_{I_1}(s) \D s, \quad a_2(t) = \int_0^t \mathds{1}_{I_2}(s) \D s.
		\end{split}
	\end{equation}
	Now time-change the processes $M^1, M^2$ from \eqref{eq:r4-simple} to define a process $X$ by
	\begin{equation}\label{eq:r4-dyadic}
		X_t = \begin{cases}
			\!\left(M^1_{a_1(t)}, \; 0\right)\!, \; & \xi = 0,\\
			\!\left(0, \; M^2_{a_2(t)}\right)\!, & \xi = 1,
		\end{cases}\quad t \in [0, 1],
	\end{equation}
	and write $\mu_t = \Law(X_t)$. Then, at time $t \in [0, 1]$, $\mu_t$ is the uniform measure on $S^1_{a_1(t)} \cup S^2_{a_2(t)} \subset \R^4$. Note that, for $t \in I_1$, the radius of $S^1_{a_1(t)}$ is increasing deterministically at rate $\sqrt{t}$, while the radius of $S^2_{a_2(t)}$ remains constant, with the roles reversed on the set of times $I_2$.
	
	We will show that there is no Markov martingale mimicking $\mu$.

\begin{proposition}\label{prop:ex-discontinuous}
	There exists a peacock $\mu$ on $\R^4$ such that there does not exist any \emph{Markov martingale} mimicking $\mu$.
\end{proposition}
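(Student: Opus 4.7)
The approach adapts the contradiction argument of \Cref{prop:ex-continuous}, exploiting the dyadic structure of the time-change in \eqref{eq:r4-dyadic} to replace the missing path continuity by a ``frozen sphere'' argument. Assume for contradiction that $Y$ is a Markov martingale mimicking $\mu$; since $\mu_0 = \delta_0$, we have $Y_0 \equiv 0$. Write $Y_t = (U_t, V_t)$ with $U_t \in \X^1$, $V_t \in \X^2$, and set $s_n \coloneqq 2^{-n}$. I will construct an event $A^1 \in \F^Y_{0+}$ of probability $\tfrac12$ whose existence contradicts Markovianity at $0$ via the same $\F^Y_0$-independence identity used in \Cref{prop:ex-continuous}. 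The main obstacle, absent in the continuous case, is the possibility of jumps between $S^1$ and $S^2$ across interval boundaries; these are killed by combining a one-sided ``frozen-sphere is constant'' lemma with mass balance coming from the fixed marginal weights $\tfrac12, \tfrac12$ of $\mu_t$ on $S^1$ and $S^2$.

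\textbf{Frozen-sphere lemma.} Fix a dyadic subinterval $[s_{n+1}, s_n] \subseteq I_1$ with $s_{n+1} > 0$, so that $a_2$ is constant on it. Since $\Law(Y_t) = \mu_t$ is supported on $S^1_{a_1(t)} \cup S^2_{a_2(t)}$, we have $|V_t|^2 \in \{0, a_2(t)\} \subseteq [0, a_2(s_{n+1})]$ a.s.\ for every $t \in [s_{n+1}, s_n]$. The martingale identity $\E[V_t \mid \F^Y_{s_{n+1}}] = V_{s_{n+1}}$ and Jensen's inequality yield $\E[|V_t|^2 \mid \F^Y_{s_{n+1}}] \ge |V_{s_{n+1}}|^2 = a_2(s_{n+1})$ on the event $\{Y_{s_{n+1}} \in S^2_{a_2(s_{n+1})}\}$, which combined with the a.s.\ upper bound forces equality in Jensen. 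By strict convexity of $|\cdot|^2$, $V_t = V_{s_{n+1}}$ a.s.\ on this event; since $V_{s_{n+1}} \ne 0$ forces $Y_t$ to remain on $S^2$, we also obtain $U_t = 0 = U_{s_{n+1}}$, i.e.\ $Y_t = Y_{s_{n+1}}$ a.s. The analogous statement holds for $I_2$-intervals with the roles of $U$ and $V$ interchanged.

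\textbf{Mass balance and contradiction.} The lemma yields $\P(Y_{s_{n+1}} \in S^2_{a_2(s_{n+1})}, Y_{s_n} \in S^1_{a_1(s_n)}) = 0$ on $I_1$-intervals; since the mimicking property gives $\P(Y_{s_{n+1}} \in S^2) = \P(Y_{s_n} \in S^2) = \tfrac12$, the reverse transition also has probability zero, so the events $\{Y_{s_{n+1}} \in S^1_{a_1(s_{n+1})}\}$ and $\{Y_{s_n} \in S^1_{a_1(s_n)}\}$ coincide up to a null set; the same conclusion holds for $I_2$-intervals. Chaining over $n \ge 1$, the event
\begin{equation*}
	A^1 \coloneqq \bigcap_{n \ge 1} \{Y_{s_n} \in S^1_{a_1(s_n)}\}
\end{equation*}
satisfies $\P(A^1) = \tfrac12$ and $A^1 \in \F^Y_{s_n}$ for every $n$, hence $A^1 \in \bigcap_n \F^Y_{s_n} = \F^Y_{0+} = \F^Y_0$ by right-continuity. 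Because $Y_0 \equiv 0$, the Markov property at time $0$ forces $\E[\mathds{1}_{A^1} f(Y_1)] = \P(A^1)\,\E[f(Y_1)]$ for every bounded measurable $f$. Taking $f = \mathds{1}_{S^1_{a_1(1)}}$, the right-hand side equals $\tfrac14$, while $A^1 \subseteq \{Y_1 \in S^1_{a_1(1)}\}$ makes the left-hand side equal to $\P(A^1) = \tfrac12$, a contradiction.
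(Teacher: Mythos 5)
Your proof is correct and follows essentially the same route as the paper: an equality-in-Jensen argument shows that $Y$ cannot switch between $S^1$ and $S^2$ across a half-dyadic interval on which the relevant radius is frozen, and chaining these identities produces a germ event $A^1 \in \F^Y_0$ of probability $\tfrac12$ that contradicts Markovianity at time $0$. The only genuine (and welcome) simplification is that by working solely on the countable dyadic skeleton $\{s_n\}$ you avoid the paper's step of passing to a c\`adl\`ag modification of $Y$; otherwise your conditional-variance ``frozen-sphere'' lemma is the mirror image of the paper's convex-ordering computation with $f(x)=\sqrt{x_3^2+x_4^2}$, and the final contradiction is the same.
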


\begin{proof}
	Let $\mu_t = \Law(X_t)$, $t \in [0, 1]$, where $X$ is defined by \eqref{eq:r4-dyadic}. Suppose that there exists a martingale $Y$ mimicking $\mu$. As in \Cref{prop:ex-continuous}, we will show that $Y$ is not Markovian at time $0$.
	
	Fix $n \in \N$ even and $t_0 \in [2^{-(n + 1)}, 2^{-n}) \subset I_1$. Then, for all $t \in [2^{-(n+1)}, 2^{-n}]$, $\mu_t$ is supported on $S^1_{a_1(t)} \cup S^2_{a_2(t_0)}$. Define a function $f\colon \R^4 \to \R$ by $f(x) = \sqrt{x_3^2 + x_4^2}$, and note that $f$ is convex. Also note that, for any $t \in [0, 1]$, $f(x) = 0$ for $x \in S^1_{a_1(t)}$, and $f(x) = \sqrt{a_2(t)}$ for $x \in S^2_{a_2(t)}$.
	Let $t \in (t_0, 2^{-n}]$. Then, by convex ordering,
	\begin{equation}\label{eq:cvx-order-ex}
		\begin{split}
			\E\!\left[f(Y_t) \mid Y_{t_0} \in S^2_{a_2(t_0)}\right]\! \geq \E \!\left[f(Y_{t_0}) \mid Y_{t_0} \in S^2_{a_2(t_0)}\right]\! & = \sqrt{a_2(t_0)},\\
			\text{and} \quad\E\!\left[f(Y_t) \mid Y_{t_0} \in S^1_{a_1(t_0)} \right]\! \geq \E \!\left[f(Y_{t_0}) \mid Y_{t_0} \in S^1_{a_1(t_0)} \right]\! & = 0.
		\end{split}
	\end{equation}
	We also have
	\begin{equation}
		\begin{split}
			\E[f(Y_t)] & = \frac12 \sqrt{a_2(t)} = \frac 12 \sqrt{a_2(t_0)} = \E[f(Y_{t_0})].
		\end{split}
	\end{equation}
	Hence equality holds in each inequality in \eqref{eq:cvx-order-ex} and, in particular,
	\begin{equation}
		\P\!\left[Y_t \in S^1_{a_1(t_0)} \mid Y_{t_0} \in S^1_{a_1(t_0)}\right]\! = 1.
	\end{equation}
	This holds for any subinterval of $I_1$, and a symmetric argument applies to $I_2$.
	Since the peacock $\mu$ is weakly continuous, we can suppose that $Y$ has c\`adl\`ag paths. Therefore
	\begin{equation}
		\P \!\left[Y_t \in S^1_{a_1(t)}, \; \forall t \geq t_0  \mid Y_{t_0} \in S^1_{a_1(t_0)} \right]\! = 1.
	\end{equation}
	
	For $t \in (0, 1]$, define $A^1_t \coloneqq \{Y_s \in S^1_{a_1(s)}, \forall s \geq t\}$, and let $A^1 \coloneqq \bigcap_{t > 0}A^1_t$. Then, for each $t \in (0, 1]$, $\P(A^1_t) = 1/2$, and so $\P(A^1) = 1/2$. Since $\sigma(Y_0)$ is trivial, $A^1 \notin \sigma(Y_0)$. On the other hand, we have $A^1_t \in \F^Y_t$. Hence $A^1 \in \F^Y_0 \coloneqq \bigcap_{s > 0}\F^Y_s$.
	
	We conclude as in \Cref{prop:ex-continuous}. Observe that, with $f$ defined as above, for any $t \in (0, 1)$, we get
	\begin{equation}
		\frac{\sqrt{a_2(t)}}{2} = \E[f(Y_t) \mid \sigma(Y_0)] \neq \E[f(Y_t) \mid \F^Y_0] =
		\begin{cases}
			0 & \text{on}\; A^1,\\
			\sqrt{a_2(t)} & \text{on}\; \Omega \setminus A^1.
		\end{cases}
	\end{equation}
	Hence $Y$ is not Markovian at time $0$.
	\end{proof}

\subsection{A partially regularized case}

In this section, we present a final example, in which we regularize a peacock, which is defined similarly as in \Cref{prop:ex-discontinuous}, by convolving the peacock at time $t \in [0, 1]$ with a centered Gaussian with covariance $t \, \id$. We will show that, even after such regularization, there exists no Markov martingale mimicking this peacock. Therefore, in order to guarantee existence of a mimicking Markov martingale, some further regularization is required, as in \Cref{thm:kellerer-regularized}.

Throughout this section, we use the notation $\gamma^\sigma$ for the $4$-dimensional centered Gaussian measure with covariance $\sigma \id$, $\sigma \in \R_+$. Also let $\gamma^0$ denote the Dirac measure at $0 \in \R^4$, so that $\mu_0 \ast \gamma^0 = \mu_0$.

\begin{proposition}\label{prop:ex-regularized}
	There exists a peacock $\mu$ on $\R^4$ such that there is no Markov martingale mimicking $\mu^\reg$, which is defined by $\mu^\reg_t \coloneqq \mu_t \ast \gamma^{t}$, for $t \in [0, 1]$.
\end{proposition}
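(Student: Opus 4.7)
The plan is to build on the construction of \Cref{prop:ex-discontinuous} by scaling the peacock spatially, so that its branching structure survives convolution with $\gamma^t$. Let $X$ be the process from \eqref{eq:r4-dyadic}, fix a large constant $N > 0$, and set $\mu_t \coloneqq \Law(N X_t)$. Spatial scaling preserves the martingale property and convex order, so $(\mu_t)_{t \in [0,1]}$ is a weakly continuous peacock with $\mu_0 = \delta_0$, supported on two circles of radii $N\sqrt{a_i(t)}$ lying in the orthogonal planes $\X^1, \X^2 \subset \R^4$. After regularization, the two branches of $\mu^{\reg}_t$ are separated at scale $N\sqrt{t}$, while the Gaussian smoothing has scale $\sqrt{t}$; for $N$ large enough, the branches remain distinguishable uniformly in $t \in (0,1]$.

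Suppose for contradiction that $Y$ is a Markov martingale mimicking $\mu^{\reg}$; then $Y_0 = 0$ almost surely and $\sigma(Y_0)$ is trivial. For each rational $t > 0$ set $B_t \coloneqq \{|\pi^1(Y_t)| > |\pi^2(Y_t)|\} \in \F^Y_t$, and define
\begin{equation*}
	A^1 \coloneqq \liminf_{t \downarrow 0,\, t \in \mathbb{Q}} B_t, \qquad A^2 \coloneqq \liminf_{t \downarrow 0,\, t \in \mathbb{Q}} B_t^c.
\end{equation*}
Both events lie in $\F^Y_0$ by right-continuity of the filtration. The key claim to establish is that $\P[A^1], \P[A^2] \to 1/2$ as $N \to \infty$. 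Granted this, the contradiction follows as in the proofs of \Cref{prop:ex-continuous,prop:ex-discontinuous}: testing the convex function $f(x) = |\pi^2(x)|^2$, the conditional expectation $\E[f(Y_t) \mid \F^Y_0]$ is approximately $2t$ on $A^1$ and $N^2 a_2(t) + 2t$ on $A^2$, whereas $\E[f(Y_t) \mid \sigma(Y_0)]$ is the deterministic value $N^2 a_2(t)/2 + 2t$, and these disagree as soon as $a_2(t) > 0$.

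The main obstacle is to prove that $Y$ respects the branching structure, i.e.\ that $B_t$ is stable as $t \downarrow 0$ with probability close to $1$ uniformly in $N$. A direct application of convex ordering to $f(x) = |\pi^2(x)|^2$ does not work, since $\int f \,\D\mu^{\reg}_t$ grows on dyadic subintervals of $I_1$ at rate $2$ due to the Gaussian contribution, rather than being constant as in the proof of \Cref{prop:ex-discontinuous}. To recover branch preservation, one would need a more delicate argument: on each dyadic subinterval $[t_0, t_1] \subseteq I_1$, apply convex ordering to a truncated test function such as $(|\pi^2(x)| - \kappa \sqrt{t_0})^+$, with $\kappa > 0$ tuned so that the Gaussian contribution to $\int f \,\D\mu^{\reg}_t$ is balanced on that interval, combine this with the submartingale property of $f(Y)$ and the large branch separation at scale $N\sqrt{a_2(t_0)}$, and propagate the resulting quantitative bounds along the nested dyadic decomposition accumulating at $0$, uniformly in $N$. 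Making this quantitative control watertight is the delicate technical step that would need careful execution.
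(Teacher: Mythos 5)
There is a genuine gap, and it sits exactly where you flag it: the branch--preservation claim $\P[A^1],\P[A^2]\to 1/2$ cannot be established by spatial scaling alone. With $\mu_t=\Law(NX_t)$ and the mandated regularization $\gamma^t$, the branch separation at time $t$ is of order $N\sqrt{t}$ while the Gaussian noise has standard deviation $\sqrt{t}$, so the noise-to-signal ratio is a \emph{constant} $1/N$, uniformly in $t$. Consequently every estimate you can extract at a single dyadic scale --- whether from Gaussian tails, from Doob's inequality applied to $\E[|Y^2_{t_1}-Y^2_{t_0}|^2]$ (which is of order $t_0$ because the variance of $\gamma^t$ itself grows by $t_0$ on $[t_0,2t_0]$, even on intervals where $a_2$ is constant), or from your truncated test functions $(|\pi^2(x)|-\kappa\sqrt{t_0})^+$ --- bounds the per-scale flip probability only by a constant $\varepsilon_N>0$ independent of $t$. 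Since the construction requires controlling the events $B_{t_k}$ along \emph{all} dyadic scales $t_k=2^{-k}$ accumulating at $0$, these per-scale errors must be summable in $k$; a bound that is merely small in $N$ but constant in $t$ sums to infinity, and $\liminf_{t\downarrow 0}B_t$ cannot be shown to carry probability near $1/2$ (for a worst-case mimicking martingale one should expect infinitely many flips). No choice of the fixed constant $N$ repairs this.

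The paper resolves precisely this obstruction by a \emph{time change} rather than a spatial scaling. It first proves the statement for the regularization $\nu^\reg_t=\nu_t\ast\gamma^{t^{14}}$, where the noise variance $t^{14}$ is a much higher power of $t$ than the branch radius squared $a_i(t)\asymp t$; this makes the signal-to-noise ratio blow up polynomially as $t\downarrow 0$ and yields the summable symmetric-difference bound $\P[\Sccc^1_{t_k}\bigtriangleup\Sccc^1_{t_{k+1}}]\le C t_k$ of \Cref{lem:app-reg}, from which the tail event $\Sc\in\F^Y_0$ with $\P[\Sc]=1/2$ is obtained. Only afterwards does it set $\mu_t\coloneqq\nu_{t^{1/14}}$, which converts the weak regularization $\gamma^{t^{14}}$ into the required $\gamma^t$. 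If you want to salvage your write-up, replace the dilation $NX_t$ by such a deterministic time change (or, equivalently, choose $\mu$ so that its intrinsic spatial scale at time $t$ dominates $\sqrt{t}$ by a factor tending to infinity as $t\downarrow 0$); the remainder of your argument --- the events in $\F^Y_0$, the triviality of $\sigma(Y_0)$, and the final comparison of conditional expectations --- then goes through along the lines you describe.
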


\begin{proof}
	Let $\nu_t = \Law(X_t)$, where $X_t$ is defined by \eqref{eq:r4-dyadic}, and define a regularized peacock $\nu^\reg$ by $\nu^\reg_t \coloneqq \nu_t \ast \gamma^{t^{14}}$, for $t \in [0, 1]$. Suppose that there exists a martingale $Y$ mimicking $\nu^\reg$. In the following, we will show that $Y$ can not be Markovian at $0$. Finally, we will time-change $\nu$ in order to find a peacock $\mu$ such that any martingale mimicking $\mu^\reg \coloneqq (\mu_t \ast \gamma^t)_{t \in [0, 1]}$ is not Markovian.

	Due to the convolution with a Gaussian $\nu^\reg_t$ is no longer concentrated on $S^1_{a_1(t)} \cup S^2_{a_2(t)}$; it rather has full support on $\R^4$, for all $t \in (0, 1]$. 
    Since $Y$ mimics $\nu^\reg$ we have, for each $t \in [0, 1]$, that $\Law(Y_t) =  \Law( X_t + N_{t^{14}})$ where $X_t \sim \text{Unif}(S^1_{a_1(t)} \cup S^2_{a_2(t)})$ and $N_{t^{14}} \sim \mathcal N(0,t^{14} \id)$ are independent. Note that, from the definitions of $a_1$ and $a_2$, we can find constants $c, C > 0$ such that $c \, t \leq a_i(t) \leq C \, t$, for $t \in [0, 1]$, $i = 1, 2$.
	We also have the estimate
    \begin{equation}
    \begin{split}
        & \P[|N_{t^{14}}| \ge a] \le \frac{t^{14}}{a^2}, \quad a > 0.
	\end{split}
    \end{equation}
	Define the events $\Sccc^1_{t} \coloneqq \{\exists x \in S^i_{a_i(t)} \colon \; |x - Y_t| < t \}$.
	Using the independence of $X_t$ and $N_{t^{14}}$, we have the bound
	\begin{equation}\label{eq:s1-lb}
		\begin{split}
			\P [\Sccc^1_t] & \geq \P[X_t \in S^1_{a_1(t)}, \, |N_{t^{14}}| < t] = \frac 12 \P[|N_{t^{14}}| < t] \geq \frac 12 - \frac{t^{12}}{2}.
		\end{split}
	\end{equation}
	On the other hand, note that
	\begin{equation}
		\{X_t \in S^1_{a_1(t)}\} = \{\exists x \in S^1_{a_1(t)} \colon \; |x - X_t|^2 \leq a_1(t)\}.
	\end{equation}
	Thus
	\begin{equation}\label{eq:s1-ub}
		\begin{split}
			\P [\Sccc^1_t] & \leq t^{14}a_1(t)^{-1} \P [\Sccc^1_t ; \, N_{t^{14}} \geq \sqrt{a_1(t)}] + \P [\Sccc^1_t ; \, N_{t^{14}} \geq \sqrt{a_1(t)}]\\
			& \leq c^{-1}t^{13} + \P[ \exists x \in S^1_{a_1(t)} \colon \; |x - X_t|^2 \leq a_1(t)] = c^{-1}t^{13} + \frac 12.
		\end{split}
	\end{equation}
	
	Write $t_k \coloneqq 2^{-k}$ for $k \in \N$. By \Cref{lem:app-reg} we have that,
	for $t \in [0, 1]$,
    \begin{equation}
        \label{eq:ex_regularized.difference}
        \P[\Sccc^1_{t_{k}} \bigtriangleup \Sccc^1_{t_{k + 1}}] \le C t_k,
    \end{equation}    
    where $A \bigtriangleup B \coloneqq (A \setminus B) \cup (B \setminus A)$ denotes the symmetric difference between events $A$ and $B$.
    Therefore, for $m,n \in \N$, $m \ge n$, the bounds \eqref{eq:s1-lb} and \eqref{eq:ex_regularized.difference} imply
    \begin{equation}
    \begin{split}
        \P\!\left[\bigcap_{k = n}^{m} \Sccc_{t_k}^1 \right]\! & = \P\!\left[\Sccc_{t_m}^1 \setminus \bigcup_{k = n}^{m - 1} (\Sccc_{t_k}^1 \bigtriangleup \Sccc_{t_{k + 1}}^1)\right]\! \geq \P[\Sccc_{t_m}^1] - \sum_{k = n}^{m-1}\P[\Sccc_{t_k}^1 \bigtriangleup \Sccc_{t_{k + 1}}^1]\\
        & \geq \frac12 - \frac{t_m^{12}}{2} - C\sum_{k = n}^{m - 1}t_k\ge \frac12 - C \sum_{k = n}^{m} t_k \xrightarrow{m, n \to \infty} \frac12,
    \end{split}
    \end{equation}
    since the sequence $(t_k)_{k \in \N}$ is summable, and \eqref{eq:s1-ub} gives
    \begin{equation}
    	\P\!\left[\bigcap_{k = n}^{m} \Sccc_{t_k}^1 \right]\! \leq \inf_{n \leq k \leq m} \P[\Sccc^1_{t_k}] \leq \frac12 + c^{-1} t_m^{13} \xrightarrow{m \to \infty} \frac12.
    \end{equation}
    Defining an increasing sequence of events $\Sc_n$, $n \in \N$, and its limit $\Sc$ by
    \begin{equation}
        \Sc_n \coloneqq \bigcap_{k = n}^\infty \Sccc_{t_k}^1, \quad \text{and} \quad \Sc \coloneqq \bigcup_{n = 1}^\infty \Sc_n,
    \end{equation}
    we conclude that
    \begin{equation}
    	\P[\Sc] = \lim_{n \to \infty} \P[\Sc_n] = \frac{1}{2}.
    \end{equation}
    Hence $\Sc \notin \sigma(Y_0)$. However, we have that, for each $k \in \N$, $\Sccc^1_{t_k} \in \F^Y_{t_k}$, and so for $n \in \N$, $\Sc_n \in \F^Y_{t_k}$ for all $k \geq n$. Hence $\Sc \in \bigcap_{k \geq n} \F^Y_{t_k} = \F^Y_0$.
    
    Now choose $k \in \N$ sufficiently large that $\P[\Sc \setminus \Sc_k] \le 1/8$ and $t_k \leq 1/8$.
    Then we have
    \begin{equation}
    	\P[\Sccc^1_{t_k} \mid \Sc] = \P[\Sccc^1_{t_k} \cap \Sc]/\P[\Sc] \geq  2 \P[\Sc_k \cap \Sc] = 2(\P[\Sc] - \P[\Sc \setminus \Sc_k]) \geq 2\!\left(\frac12 - \frac18\right)\! = \frac34,
    \end{equation}
    while on the other hand, $\P[\Sccc^1_{t_k} \mid \sigma(Y_0)] \leq 1/2 + t_k \leq 5/8 < 3/4$. Therefore $Y$ cannot be Markovian at time $0$.
    
    Now define a peacock $\mu$ by a time-change of $\nu$ such that $\mu_t \coloneqq \nu_{t^{14^{-1}}}$, and define a regularized peacock $\mu^\reg$ by $\mu^\reg_t \coloneqq \mu_t \ast \gamma^t$, $t \in [0, 1]$. Then, rescaling time by $t \mapsto t^{14^{-1}}$ in all of the above arguments, we obtain the result that any martingale mimicking $\mu^\reg$ cannot be Markovian at time $0$.
\end{proof}

\section{Compactness of martingale It\^o diffusions}\label{sec:compactness-Ito}

In this section we prove a compactness result for martingale diffusions with respect to convergence in finite dimensional distributions (Theorem \ref{thm:fdd_convergence}).
We applied this result in the proof of \Cref{thm:kellerer-regularized} in order to pass to a limit when constructing a mimicking martingale diffusion. This parallels the approach of Lowther \cite{Lo09} to the one-dimensional case.

Consider a sequence $(\sigma^k)_{k \in \N}$ of positive semidefinite matrix-valued measurable functions $\sigma^k \colon [0,1] \times \R^d \to \R^{d \times d}$.
Let $\Sigma^k \colon [0,1] \times \R^d \to \R^{d \times d}$ denote the integral
\begin{equation}
    \Sigma^k_t(x) \coloneqq \int_0^t \sigma_s^k(x)^2 \, \D s.
\end{equation}
Moreover, fix a sequence of initial distributions $(\mu_0^k)_{k \in \N}$ and suppose that there exist weak solutions $(X^k)_{k \in \N}$ of the SDEs
\begin{equation}\label{eq:sde-sequence}
    dX_t^k = \sigma^k_t(X_t^k) \, \D B^k_t, \quad \mbox{ with }X_0^k \sim \mu_0^k,
\end{equation}
where $B^k$ denotes a standard $\R^d$-valued Brownian motion, and $(X^k, B^k)$ is defined on some probability space $(\Omega^k, \F^k, \P^k)$, for each $k \in \N$.
Let $(\mu^k_t)_{t \in [0,1]}$ denote the marginal distributions of $X^k$, for each $k \in \N$.

In the following we will use combinations of the following assumptions, which were satisfied in the setting of \Cref{prop:pasting-intervals}.

\begin{assumption}\label{ass:cpct}\;

	\begin{enumerate}[label = (A\arabic*)]
	    \item \label[ass]{it:assumption_1}
	    The map $x \mapsto \sigma^k_t(x)^2$ is locally Lipschitz continuous, uniformly in $k \in \N$ and $t \in [0,1]$.
	    \item \label[ass]{it:assumption_2}
	    For every $x \in \R^d$ the value of $\| \sigma^k_t(x) \|$ is bounded, uniformly in $k \in \N$ and $t \in [0,1]$.
	    \item \label[ass]{it:assumption_3}
	    The family of random variables $\{ |X^k_1|^2 \colon k \in \N \}$ is uniformly integrable.
	    \item \label[ass]{it:assumption_4}
	    The matrix $\sigma^k_t(x)^2$ is positive definite with eigenvalues bounded away from zero, locally in $x \in \R^d$, uniformly in $t \in \bigcup_{j = 0}^{2^k - 1} [j 2^{-k}, j 2^{-k} +2^{-k-1}],$ and $k \in \N$.
	    \item \label[ass]{it:assumption_5}
	    The set of initial distributions $\{ \mu_0^k \colon k \in \N \}$ converges to $\mu_0 \in \mathcal P_2(\R^d)$.
	\end{enumerate}
\end{assumption}

\begin{remark}
	Due to \Cref{it:assumption_1,it:assumption_2} and the Arzel\`a-Ascoli theorem, we can assume without loss of generality, by passing to subsequences, that $(\Sigma^k_t(x))_{k \in \N}$ converges for every $(t, x) \in [0, 1] \times \R^d$.
\end{remark}
	
\begin{remark}[continuity of matrix square root]\label{rem:square-root}
	Suppose that, for some domain $\mathcal O \subseteq \R^d$, and some function $\theta \colon \mathcal O \to \R^{d \times d}$, the square $\theta^2 \colon \mathcal O \to \R^{d \times d}$ is Lipschitz continuous. Then
	\begin{enumerate}[label = (\roman*)]
		\item $x \mapsto \theta(x)$ is $1/2$-H\"older  continuous on $\mathcal O$, since the matrix square root is $1/2$-H\"older  continuous on the set of positive semidefinite matrices \cite[Theorem 1.1]{Wi09}.
	\end{enumerate}
	If, moreover, the eigenvalues of $\theta(x)^2$ are bounded away from zero for each $x \in \mathcal O$, then
	\begin{enumerate}[label = (\roman*), start = 2]
		\item $x \mapsto \theta(x)$ is Lipschitz on $\mathcal O$, since the matrix square root is Lipschitz on the set of positive definite matrices with eigenvalues bounded away from zero.
	\end{enumerate}
	
	In particular, \Cref{ass:cpct} implies that $x \mapsto \sigma^k_t(x)$ is locally $1/2$-H\"older continuous uniformly in $t \in [0, 1]$ and $k \in \N$, and locally Lipschitz continuous uniformly in $t \in \bigcup_{j = 0}^{2^k - 1} [j 2^{-k}, j 2^{-k} +2^{-k-1}]$ and $k \in \N$.
	Thus the SDEs \eqref{eq:sde-sequence} may not admit unique strong solutions; see \cite[Remark 2]{YW2}.
\end{remark}

\begin{theorem}
    \label{thm:fdd_convergence}
    Suppose that there exist weak solutions $(X^k)_{k \in \N}$ of \eqref{eq:sde-sequence}, \Crefrange{it:assumption_1}{it:assumption_5} are satisfied, and $(\Sigma^k_t(x))_{k \in \N}$ converges pointwise for $(t,x) \in [0,1] \times \R^d$ to $\Sigma \colon [0,1] \times \R^d \to \R^{d \times d}$.
    Then there exists a function $(t, x) \mapsto \sigma_t(x)$ taking values in the set of positive definite $d \times d$-matrices such that, uniformly in $t \in [0, 1]$,
    \begin{enumerate}[label = (\roman*)]
    	\item $(t, x) \mapsto \sigma_t(x)^2$ is locally Lipschitz continuous,
    	\item for each $x \in \R^d$, there exist constants $c, C > 0$ such that $c \, \id \leq \sigma_t(x)^2 \leq C \, \id$.
    \end{enumerate}
    Moreover, let $(\Omega, \F, \P)$ be any probability space supporting a standard $\R^d$-valued Brownian motion $B$ and independent random variable $\xi \sim \mu_0$. Then there exists a unique strong solution $X$ of the SDE $dX_t = \sigma_t(X_t) \, \D B_t$, with $X_0 \sim \mu_0$, and $(X^k)_{k \in \N}$ converges in finite dimensional distributions to $X$.
\end{theorem}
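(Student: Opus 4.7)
My plan is to construct the limit coefficient $\sigma$ from the pointwise limit of $\Sigma^k$, establish well-posedness of the limit SDE together with tightness of $(X^k)_{k \in \N}$, and then identify every subsequential fdd-limit as the unique strong solution.

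\textbf{Building $\sigma$.} By \Cref{it:assumption_1,it:assumption_2} the family $(\Sigma^k)_{k \in \N}$ is locally equi-Lipschitz in $x$ uniformly in $(t,k)$, and Lipschitz in $t$ with a constant determined by any compact in $x$. Combined with pointwise convergence, Arzel\`a--Ascoli upgrades this to locally uniform convergence $\Sigma^k \to \Sigma$, and $\Sigma$ inherits the same regularity. Hence $t \mapsto \Sigma_t(x)$ is absolutely continuous for each $x$, and I set $\sigma_t(x)^2 \coloneqq \partial_t \Sigma_t(x)$ on a version that is jointly measurable and locally Lipschitz in $x$ uniformly in $t$, obtained by differencing the $x$-Lipschitz bound for $\Sigma^k$ before passing to the time derivative. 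The upper bound $\sigma_t(x)^2 \leq C_x \id$ follows from \Cref{it:assumption_2}. For the lower bound, let $G_k \coloneqq \bigcup_{j=0}^{2^k-1}[j 2^{-k}, j 2^{-k} + 2^{-k-1}]$; this set has Lebesgue measure $1/2$ and satisfies $|G_k \cap [s,t]|/(t-s) \to 1/2$ as $k \to \infty$ for every $0 \leq s < t \leq 1$. By \Cref{it:assumption_4}, for $x$ in a compact $K$,
\begin{equation*}
	\Sigma^k_t(x) - \Sigma^k_s(x) \geq c_K \lvert G_k \cap [s,t] \rvert \id,
\end{equation*}
which passes to the limit as $\Sigma_t(x) - \Sigma_s(x) \geq (c_K/2)(t-s)\id$, so $\sigma_t(x)^2 \geq (c_K/2)\id$ almost everywhere in $t$; modifying on a null set extends the bound to every $t$.

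\textbf{SDE well-posedness, tightness, extraction.} Since $\sigma^2$ is locally Lipschitz in $x$ uniformly in $t$ and uniformly elliptic at each $x$, \Cref{rem:square-root} gives the same regularity for $\sigma$ itself. Standard SDE theory then yields a unique non-exploding strong solution $X$ with $\Law(X_0) = \mu_0$, non-explosion being controlled via the second-moment bound inherited from $\E[|X^k_1|^2] = \E[|X^k_0|^2] + \int_0^1 \E[\trace(\sigma^k_s(X^k_s)^2)] \D s$ being uniformly bounded by \Cref{it:assumption_3,it:assumption_5}. The same estimates provide Kolmogorov-type moment bounds on the increments of $X^k$, giving tightness of $(\Law(X^k))_{k \in \N}$ on $C([0,1], \R^d)$. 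After passing to a subsequence and applying Skorokhod representation, I may assume $X^k \to \tilde X$ almost surely uniformly on $[0,1]$, on a common probability space.

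\textbf{Identification, conclusion, and main obstacle.} The limit $\tilde X$ is a continuous martingale. To identify its bracket I exploit the locally uniform convergence of $\Sigma^k$: for $0 \leq s < t \leq 1$ and a partition $s = t_0 < \dotsb < t_N = t$, approximating $\int_s^t \sigma^k_u(X^k_u)^2 \D u$ by $\sum_i [\Sigma^k_{t_{i+1}}(X^k_{t_i}) - \Sigma^k_{t_i}(X^k_{t_i})]$, with error controlled via the $x$-equi-Lipschitz property of $\Sigma^k$ and uniform continuity of $X^k$, and combining with $\Sigma^k \to \Sigma$ and $X^k \to \tilde X$ uniformly, yields $\int_s^t \sigma^k_u(X^k_u)^2 \D u \to \int_s^t \sigma_u(\tilde X_u)^2 \D u$ almost surely. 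Passing to the limit in the $C^2_c$-martingale relations for $X^k$ then shows that $\tilde X$ solves the martingale problem for $\mathcal L_t f = \tfrac12 \trace(\sigma_t^2 D^2 f)$. By \cite[Theorem 10.1.3]{StVa79} this martingale problem admits at most one solution under the bounds and Lipschitz property established above, hence $\tilde X$ has the same law as $X$; since every subsequential limit has this law, the full sequence converges in finite dimensional distributions to $X$. The delicate point --- and the main obstacle --- is the identification: pointwise convergence of $\Sigma^k$ does \emph{not} yield any pointwise convergence of $\sigma^k_t(x)^2$ in $t$, since the $\sigma^k_t$ carry no time regularity, so the Riemann-sum argument above, which decouples the $x$-approximation (via \Cref{it:assumption_1}) from the $t$-integration (via the integrated convergence $\Sigma^k \to \Sigma$), is the technical heart of the proof.
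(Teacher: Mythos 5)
Your construction of $\sigma$ from the pointwise limit of $\Sigma^k$ matches the paper's \Cref{lem:Lipschitz_continuity_sigma} in substance: the paper obtains the Lipschitz bound on the density $\rho=\partial_t\Sigma$ from weak lower semicontinuity of the Lipschitz functional, you from differencing before differentiating, and your explicit density-$\tfrac12$ computation of the lower bound from \Cref{it:assumption_4} is if anything more detailed than the paper's. For the convergence itself, however, you take a genuinely different route. The paper never proves tightness on path space: it argues in two stages (\Cref{prop:fdd_convergence} for coefficients frozen outside a ball $B_R$, then a stopping-time localization), and inside the proposition it compares the one-step dyadic transition kernels $\pi^n_{m,k}$ of $X^n$ with Gaussian kernels $\bar\pi^n_{m,k}$ obtained by freezing the coefficient, controls $\W_2(\pi^n_{m,k},\bar\pi^n_{m,k})$ via the H\"older continuity of $\sigma^n$ and $\W_2(\bar\pi^n_{m,k},\bar\pi_{m,k})$ via $\Sigma^n\to\Sigma$ and \Cref{lem:Lipschitz_continuity_gaussians}, and then builds an explicit coupling: an auxiliary process $S^m$ solving the limit SDE between dyadic times with $\W_2$-optimal jump corrections at dyadic times, whose jump part vanishes and whose continuous part is $L^2$-Cauchy by Gr\"onwall. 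You instead use tightness in $C([0,1];\R^d)$, Skorokhod representation, identification of the limit as a solution of the martingale problem for $\tfrac12\trace(\sigma^2D^2\cdot)$, and uniqueness from \cite[Theorem 10.1.3]{StVa79} --- the same uniqueness result the paper only invokes later, for the Feller property. Your Riemann-sum identification, which replaces $\int\sigma^k_u(X^k_u)^2\,\D u$ by increments of $\Sigma^k$ along the path so that only the \emph{integrated} coefficient needs to converge in $t$, is sound and is a clean substitute for the paper's Gaussian one-step approximation. The paper's method buys a quantitative coupling with explicit rates and no path-space compactness; yours buys a shorter, softer argument on classical machinery and in fact yields convergence in law on $C([0,1];\R^d)$, which is stronger than fdd convergence.

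Two points need more care than you give them, though neither is a wrong idea. First, tightness: \Cref{it:assumption_2} is only a pointwise-in-$x$ (hence local) bound on $\lVert\sigma^k\rVert$, so the Kolmogorov/BDG moment bounds you allude to are available only for the processes stopped on exiting a ball $B_R$; you then need the uniform estimate $\P(\sup_t|X^k_t|>R)\le C/R^2$ (Doob plus \Cref{it:assumption_3}) to transfer tightness to $X^k$ itself --- exactly the localization the paper performs with $\sigma^{n,R}$. Second, non-explosion of the limit SDE: since the bounds $c_x\,\id\le\sigma_t(x)^2\le C_x\,\id$ are only local, well-posedness on $[0,1]$ does not follow from the coefficient alone, and your appeal to the second-moment identity controls the approximating processes $X^k$, not the limit. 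The correct order is to first extract the weak limit $\tilde X$ (a global solution) and then use local pathwise uniqueness to conclude that the strong solution cannot explode before time $1$; as written, your well-posedness paragraph precedes the extraction and is slightly circular.
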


As a simple corollary we have the following compactness result.

\begin{corollary}
    Under \Crefrange{it:assumption_1}{it:assumption_5}, the set of martingale It\^o diffusions $\{ X^k \colon k \in \N \}$ is precompact with respect to convergence in finite dimensional distributions in the set of martingale It\^o diffusions.
\end{corollary}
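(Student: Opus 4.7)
The plan is to extract a limiting coefficient $\sigma$ from the convergence of $\Sigma^k$, verify its regularity, establish well-posedness of the limit SDE, and identify the limit of $(X^k)$ via a tightness-plus-martingale-problem argument. For the construction, observe that under \Cref{it:assumption_1,it:assumption_2}, each $\Sigma^k$ is Lipschitz in $t$ (with constant depending on $x$) and locally Lipschitz in $x$, uniformly in $k$; these properties pass to the pointwise limit $\Sigma$. Define $\sigma_t(x)^2$ as the positive semidefinite derivative $\partial_t \Sigma_t(x)$ where it exists (a.e.\ $t$, for each $x$), extended to $[0, 1] \times \R^d$ by Lipschitz extension at each fixed $t$. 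The local Lipschitz continuity of $\sigma^2$ in $x$, uniform in $t$, follows by passing $k \to \infty$ then $h \to 0$ in
\begin{equation*}
    \lVert h^{-1}(\Sigma^k_{t + h}(x) - \Sigma^k_t(x)) - h^{-1}(\Sigma^k_{t + h}(y) - \Sigma^k_t(y)) \rVert \leq L_K |x - y|, \quad x, y \in K.
\end{equation*}
The upper bound $\sigma_t(x)^2 \leq C_x \, \id$ is inherited from \Cref{it:assumption_2}. For the lower bound, with $A_k \coloneqq \bigcup_{j} [j 2^{-k}, j 2^{-k} + 2^{-k-1}]$, the Lebesgue measure of $[s, t] \cap A_k$ tends to $(t - s)/2$ as $k \to \infty$, so \Cref{it:assumption_4} yields $\Sigma_t(x) - \Sigma_s(x) \geq (c_K/2)(t - s)\, \id$ and hence $\sigma_t(x)^2 \geq (c_K/2)\,\id$ after redefining on a Lebesgue-null set of times.

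By \Cref{rem:square-root}(ii), the matrix square root $\sigma$ is itself locally Lipschitz in $x$ uniformly in $t$, so standard SDE theory yields a unique strong solution $X$ of $\D X_t = \sigma_t(X_t) \D B_t$ up to explosion; the uniform bound $\sup_{k, t} \E[\lvert X^k_t \rvert^2] \leq \sup_k \E[\lvert X^k_1 \rvert^2] < \infty$ (from \Cref{it:assumption_3} and the submartingale property of $\lvert X^k \rvert^2$) passes to the limit to rule out explosion. For tightness of $(X^k)$ in $C([0, 1], \R^d)$, Doob's $L^2$-inequality controls $\P[\sup_t \lvert X^k_t\rvert > N]$ uniformly in $k$; on the event $\{\sup_t \lvert X^k_t\rvert \leq N\}$, the coefficient $\sigma^k_t(X^k_t)$ is bounded by $C_N$ via \Cref{it:assumption_2}, giving the requisite modulus-of-continuity estimates via localization and Kolmogorov--Chentsov.

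Let $\bar X$ be a weak subsequential limit of $(X^k)$ in $C([0, 1], \R^d)$. It suffices to show $\bar X$ solves the martingale problem for $L_t f \coloneqq \tfrac12 \trace(\sigma_t(\cdot)^2 D^2 f)$ with $\Law(\bar X_0) = \mu_0$ (guaranteed by \Cref{it:assumption_5}), since well-posedness of this martingale problem via \cite[Theorem 10.1.3]{StVa79} then forces $\Law(\bar X) = \Law(X)$, yielding convergence of the full sequence in finite dimensional distributions. For $f \in C_c^\infty(\R^d)$ we know that $f(X^k_t) - \int_0^t \tfrac12 \trace(\sigma^k_s(X^k_s)^2 D^2 f(X^k_s))\, \D s$ is a martingale; by Skorokhod's representation we may assume $X^k \to \bar X$ almost surely in $C([0, 1], \R^d)$, and we must pass the drift integral to the limit.

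\textbf{Main obstacle.} The essential difficulty is that we have only pointwise convergence of the \emph{integrated} coefficients $\Sigma^k$, not of $\sigma^k$ itself. The strategy is to approximate $\sigma^k_s(X^k_s)^2$ by $\sigma^k_s(y_j)^2$ on a finite spatial grid $\{y_j\}$, with error controlled by the uniform local Lipschitz bound from \Cref{it:assumption_1} together with the uniform modulus of the paths $X^k$; the time integrals $\int_0^t \sigma^k_s(y_j)^2\, \D s$ are then handled by the pointwise convergence of $\Sigma^k$ via a Riemann-sum argument in time. This averaging step, in the spirit of Gy\"ongy-type mimicking arguments, is the heart of the compactness result.
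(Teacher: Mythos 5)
Your argument is essentially sound, but note first that the paper treats this corollary as an immediate consequence of \Cref{thm:fdd_convergence}: by \Cref{it:assumption_1,it:assumption_2} and Arzel\`a--Ascoli one extracts a subsequence along which $\Sigma^k$ converges pointwise, and the theorem then gives fdd-convergence of that subsequence to a martingale It\^o diffusion. You instead reprove the substance of the theorem, and you do so by a genuinely different route. Your construction of the limit coefficient (difference quotients of $\Sigma$, upper bound from \Cref{it:assumption_2}, lower bound from \Cref{it:assumption_4} via $|[s,t]\cap A_k|\to (t-s)/2$) matches the paper's \Cref{lem:Lipschitz_continuity_sigma} in substance, though the paper obtains the Lipschitz bound on $\rho=\partial_t\Sigma$ by weak lower semicontinuity of the Lipschitz functional and a choice of $L^2$-representative, which handles the $x$-dependent null sets of times more cleanly than your ``Lipschitz extension at each fixed $t$''. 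The real divergence is in identifying the limit: the paper couples the one-step transition kernels of $X^{n}$ with Gaussian kernels (\Cref{lem:Lipschitz_continuity_gaussians}), builds auxiliary processes $S^m$ solving the limit SDE with small corrective jumps given by $\W_2$-optimal maps, and shows these are $L^2$-Cauchy via Gr\"onwall; you use tightness in $C([0,1];\R^d)$ plus Skorokhod representation and stability of the martingale problem, with uniqueness from \cite[Theorem 10.1.3]{StVa79} forcing convergence of the whole sequence. You correctly identify the crux --- only $\Sigma^k$, not $\sigma^k$, converges --- and your fix (freeze the spatial argument on small time intervals, use the uniform local Lipschitz bound and the uniform path modulus, then pass to the limit in the increments $\Sigma^k_{t_{i+1}}-\Sigma^k_{t_i}$, which converge locally uniformly in $x$ by equicontinuity) is workable. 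The paper's coupling buys quantitative $L^2$ rates and a pathwise comparison; your route is shorter and more standard but leans earlier on Stroock--Varadhan well-posedness, which the paper only invokes afterwards to get the Feller property.
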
 

We start with two auxiliary lemmas.

\begin{lemma}
    \label{lem:marginal_curves}
    Suppose that there exist weak solutions $(X^k)_{k \in \N}$ of \eqref{eq:sde-sequence}.
    Under \Crefrange{it:assumption_1}{it:assumption_3}, the sequence of curves $t \mapsto \mu^k_t$, $t \in [0,1]$, of marginal distributions of $(X^k)_{k \in \N}$ is equicontinuous in $C([0,1], \mathcal P_2(\R^d))$ with respect to the $\W_2$-metric on $\mathcal P_2(\R^d)$.
\end{lemma}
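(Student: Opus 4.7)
The plan is to exploit the coupling of marginals furnished by the SDE itself. For $k \in \N$ and $0 \leq s \leq t \leq 1$, the pair $(X^k_s, X^k_t)$ couples $\mu^k_s$ and $\mu^k_t$; the martingale property combined with the It\^o isometry then gives
\begin{equation*}
\W_2(\mu^k_s, \mu^k_t)^2 \leq \E\bigl[|X^k_t - X^k_s|^2\bigr] = \E\bigl[|X^k_t|^2 - |X^k_s|^2\bigr] = \E\!\left[\int_s^t \trace\bigl(\sigma^k_r(X^k_r)^2\bigr)\,\D r\right]\!,
\end{equation*}
so the task reduces to bounding the right-hand side uniformly in $k$ by a modulus in $t - s$.

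To produce such a bound, I would localize by the exit time $\tau_R^k \coloneqq \inf\{r \in [0,1] \colon |X^k_r| > R\} \wedge 1$. On $\{r \leq \tau_R^k\}$ the process remains in $B_R$, so \Cref{it:assumption_2} supplies a constant $C_R$, independent of $k$ and $r$, with $\trace(\sigma^k_r(X^k_r)^2) \leq C_R$; the corresponding contribution to the integral is at most $C_R(t-s)$. For the tail, applying optional sampling to the bounded stopped martingale $X^{k,\tau_R^k}$ gives
\begin{equation*}
\E\!\left[\int_{\tau_R^k \wedge t}^{\tau_R^k \vee t} \trace\bigl(\sigma^k_r(X^k_r)^2\bigr)\,\D r\right]\! \leq \E\!\left[\int_{\tau_R^k}^1 \trace\bigl(\sigma^k_r(X^k_r)^2\bigr)\,\D r\right]\! = \E\bigl[|X^k_1|^2 - |X^k_{\tau_R^k}|^2\bigr] \leq \E\bigl[|X^k_1|^2 \mathds{1}_{\{\tau_R^k < 1\}}\bigr].
\end{equation*}
Doob's maximal inequality controls $\P[\tau_R^k < 1] \leq R^{-2}\E[|X^k_1|^2]$, uniformly in $k$ by \Cref{it:assumption_3}, and uniform integrability of $\{|X^k_1|^2 \colon k \in \N\}$ then forces $\sup_k \E\bigl[|X^k_1|^2 \mathds{1}_{\{\tau_R^k < 1\}}\bigr] \to 0$ as $R \to \infty$.

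Given $\varepsilon > 0$, I would first choose $R$ so large that the tail contribution is at most $\varepsilon/2$ uniformly in $k$, and then take $\delta \coloneqq \varepsilon/(2 C_R)$; this yields $\W_2(\mu^k_s, \mu^k_t)^2 \leq \varepsilon$ whenever $0 \leq t - s < \delta$, which is the desired uniform modulus of continuity. The main obstacle is the absence of any global bound on the diffusion coefficient: \Cref{it:assumption_2} is purely local, so the argument must convert that local control into uniform tightness of the time increments, and it is precisely the coupling of the localisation with the $L^2$-uniform integrability hypothesis \Cref{it:assumption_3} that achieves this.
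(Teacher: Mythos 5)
Your argument is correct and rests on the same two pillars as the paper's proof: localization at the exit time of a large ball $B_R$, where \Cref{it:assumption_1,it:assumption_2} give a uniform bound on the diffusion coefficient, and control of the tail beyond $\tau^k_R$ via Doob's maximal inequality together with the uniform integrability from \Cref{it:assumption_3}. The bookkeeping differs slightly: the paper stops the process, shows $\W_2(\mu^k_t,\mu^{k,R}_t)<\varepsilon$ uniformly, and then proves a uniform $1/2$-H\"older bound for the stopped marginal curves, whereas you keep the original process and split the It\^o-isometry time integral at $\tau^k_R$, converting the post-$\tau^k_R$ contribution into $\E[|X^k_1|^2\mathds 1_{\{\tau^k_R<1\}}]$ by optional stopping. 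Your tail estimate is arguably the more transparent of the two, since it makes the role of uniform integrability explicit rather than routing it through $\W_2$-precompactness of the terminal laws. Two small points of precision: the uniform constant $C_R$ on $B_R$ is not supplied by \Cref{it:assumption_2} alone, since that assumption is pointwise in $x$; you need the uniform local Lipschitz bound of \Cref{it:assumption_1} to propagate the bound at a single point to all of $B_R$ (the paper's three-term estimate does exactly this). Also, the integration range $[\tau^k_R\wedge t,\,\tau^k_R\vee t]$ does not describe the tail of $[s,t]$ when $\tau^k_R\ge t$; the correct tail set is $[s\vee\tau^k_R,\,t]$ on $\{\tau^k_R<t\}$ and empty otherwise, which is still contained in $[\tau^k_R,1]$, so your subsequent inequality and the conclusion stand.
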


\begin{proof}
       Since by \Cref{it:assumption_3} the set of terminal distributions is $\W_2$-precompact, the set $\{\eta \in \mathcal P_2(\R^d) \colon \exists k \in \N \mbox{ with }\eta \preceq  \mu_1^k \}$ is also $\W_2$-precompact.
       Applying Doob's maximal $L^2$-inequality, for each $\varepsilon > 0$, we can find a ball $B_R \subseteq \R^d$ of radius $R > 0$ such that for all $t \in [0,1]$ and $k \in \N$, $\W_2(\mu_t^k,\mu_t^{k,R}) < \varepsilon$ where
    \begin{equation}
        X^{k,R}_t \coloneqq X^k_{\tau^k \wedge t}, \quad \mu^{k,R}_t \coloneqq \operatorname{Law}(X^{k,R}_t), \quad
        \tau^k \coloneqq \inf \{ s > 0 \colon X^k_s \notin B_R \}.
    \end{equation}
    Next, we show that the curves $(\mu^{k,R})_{k \in \N}$ are $1/2$-H\"older continuous with uniform H\"older constant $\sqrt{\Lambda_\varepsilon} > 0$.
    Indeed, by the It\^o isometry,    \Cref{it:assumption_1,it:assumption_2}, we get, for $0 \le t_0 \le t_1 \le 1$, $k \in \N$,
    \begin{align}
        \mathcal W_2^2(\mu_{t_0}^{k,R}, \mu_{t_1}^{k,R}) &\le \mathbb E \!\left[| X_{\tau^k \wedge t_1}^k - X_{\tau^k \wedge t_0}^k|^2 \right]\! = \mathbb E\!\left[ \int_{t_0 \wedge \tau^k }^{t_1 \wedge \tau^k} \|\sigma^k_s(X_s^k)\|^2 \, \D s\right]\!
        \\
        &\le 
        3 \mathbb E \!\left[ \int_{t_0 \wedge \tau^k}^{t_1 \wedge \tau^k} \!\left(\|\sigma_s^k(0)\|^2 + \|\sigma_s^k(0) - \sigma_s^k(X_t^k)\|^2 + \|\sigma_s^k(X_t^k) - \sigma_s^k(X_s^k)\|^2\right)\! \D s \right]\!
        \\
        &\le
        3 (t_1 - t_0) \!\left( C + L_R^2 \mathbb E\!\left[|X_{t_1}^k|^2\right]\! + L_R^2 \mathbb E\!\left[|X_{t_0}^k - X_{t_1}^k|^2\right]\! \right)\!,
    \end{align}
    where $L_R$ denotes the Lipschitz constant for $\sigma^k_s$ on $B_R$ that is provided by \Cref{it:assumption_1}.
    Setting $\Lambda_\varepsilon \coloneqq 3( C + 2 L_R^2 \sup_{k \in \N} \mathbb E [|X_1^{k}|^2])$ we obtain a uniform $1/2$-H\"older bound. We conclude by noting that uniform convergence preserves equicontinuity.
	In fact the function $\delta \mapsto \inf_{\varepsilon > 0} \{ 2 \varepsilon + \sqrt{ \Lambda_\varepsilon \delta } \}$ is a modulus of continuity for the sequence $(\mu^k)_{k \in \N}$.
  \end{proof}

For the following lemma, compare to Beiglb\"ock, Huesmann, Stebegg \cite[Theorem 1]{BeHuSt16}. Let $\mathcal D([0, 1]; \R^d)$ denote the Skorohod space of $\R^d$-valued c\`adl\`ag paths over the time interval $[0, 1]$.

\begin{lemma}\label{lem:cpct-continuity-points}
    Let $\Lambda$ be a $\W_2$-compact subset of $\mathcal P_2(\R^d)$ and let $\mathcal M(\Lambda)$ denote the set of probability measures on $\mathcal D([0, 1]; \R^d)$ defined by
    \begin{equation} 
        \mathcal M(\Lambda) \coloneqq \{ \pi = \Law(M) \colon (M_t)_{t \in [0, 1]} \text{ is a c\`adl\`ag martingale with }\Law(M_1) \in \Lambda \}.
    \end{equation}
    Then, for any sequence $(\pi^k)_{k \in \N}$ in $\mathcal M(\Lambda)$ there exists $\pi \in \mathcal M(\Lambda)$ and a subsequence $(\pi^{k_j})_{j \in \N}$ that converges to $\pi = \Law(M)$ in finite dimensional distributions on the set of continuity points with respect to the weak topology of the function $t \mapsto \Law(M_t)$.
\end{lemma}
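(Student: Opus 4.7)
The plan is to combine the convex-order domination inherent in the martingale structure with a diagonal extraction over a countable dense set of times, followed by a c\`adl\`ag regularization of the resulting limit process, and finally a sandwich argument to promote convergence from dense times to arbitrary continuity points.

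First, since each $M^k$ is a martingale, Jensen's inequality gives $\Law(M^k_t) \preceq \Law(M^k_1) \in \Lambda$ for every $t \in [0, 1]$. As $\Lambda$ is $\W_2$-compact, the set of measures dominated in convex order by an element of $\Lambda$ is likewise $\W_2$-relatively compact, so the family $\{\Law(M^k_t) : t \in [0, 1],\, k \in \N\}$ lies in a fixed $\W_2$-compact subset $\hat{\Lambda} \subseteq \Pc_2(\R^d)$, and in particular the squared norms are uniformly integrable. Fix a countable dense set $T \subset [0, 1]$ containing $1$. For each finite $F \subset T$ the uniform tightness of marginals yields tightness of $\{\Law((M^k_t)_{t \in F}) : k \in \N\}$ on $\R^{d|F|}$, so by diagonalization one extracts a subsequence $(k_j)$ along which $\Law((M^{k_j}_t)_{t \in F})$ converges weakly for every finite $F \subset T$. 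The uniform $L^2$ bound promotes these weak limits to $\W_2$-limits and allows one to pass the conditional expectation identity $\E[M^{k_j}_{t_2}\mid M^{k_j}_{s_1},\dots,M^{k_j}_{s_n}] = M^{k_j}_{s_n}$ (with $s_1 \leq \dots \leq s_n \leq t_2$ in $T$) to the limit, producing a consistent family of finite-dimensional distributions on $T$ realized by a process $\tilde{M}$ that is a martingale on $T$.

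Next, the convex-order-increasing curve $(\Law(\tilde{M}_t))_{t \in T}$ remains in $\hat{\Lambda}$ and therefore admits a right-continuous extension to $[0, 1]$; combining this right-continuity with the standard martingale regularization theorem (cf.\ \cite{DMb}) yields a c\`adl\`ag martingale $M$ on $[0, 1]$ whose restriction to $T$ has the same finite-dimensional distributions as $\tilde{M}$. Setting $\pi \coloneqq \Law(M) \in \mathcal M(\Lambda)$ gives the candidate limit.

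Finally, for continuity points $t_1 < \dots < t_n$ of $s \mapsto \Law(M_s)$, one approximates each $t_i$ from both sides by sequences in $T$ and applies a sandwich argument: by joint tightness of $\{\Law((M^{k_j}_{t_1},\dots,M^{k_j}_{t_n}))\}_j$, one extracts a weak limit $\nu$; the one-dimensional marginals of $\nu$ are squeezed between $\mu_s$ and $\mu_{s'}$ for $s < t_i < s'$ in $T$, forcing them to equal $\mu_{t_i}$ by continuity of $\mu$ at $t_i$, while the joint structure is pinned down by approximating $t_i$ by $s^i_m \in T$ on which finite-dimensional convergence is already known and invoking the $\W_2$-equicontinuity inherited from $\hat \Lambda$. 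The main obstacle is the regularization step: one must carefully verify that the convex-order curve $t \mapsto \mu_t$ is right-continuous on $T$ and that, consequently, the hypotheses of the martingale regularization theorem apply to yield a c\`adl\`ag martingale $M$ indexed by $[0,1]$ rather than just by $T$. This right-continuity is the decisive consequence of the uniform $\W_2$-control that martingales inherit from the $\W_2$-compactness of their terminal marginals.
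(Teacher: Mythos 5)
Your proposal follows essentially the same route as the paper: convex-order domination of all marginals by $\Lambda$ plus $\W_2$-compactness of the dominated set, diagonal extraction over a countable dense set of times, consistency/Kolmogorov plus martingale regularization to produce a c\`adl\`ag limit martingale $M$ with $\Law(M_1)\in\Lambda$, and a two-sided approximation at continuity points of $t\mapsto\Law(M_t)$. The one imprecision is the appeal to ``$\W_2$-equicontinuity inherited from $\hat\Lambda$'' --- the marginal curves of arbitrary c\`adl\`ag martingales with terminal law in $\Lambda$ need not be equicontinuous; what actually closes the sandwich is the martingale identity $\E\left[|M^{k_j}_{q_+}-M^{k_j}_{q_-}|^2\right]=\E\left[|M^{k_j}_{q_+}|^2\right]-\E\left[|M^{k_j}_{q_-}|^2\right]$ combined with continuity of the non-decreasing limit function $t\mapsto\E\left[|M_t|^2\right]$ at the chosen points, which is precisely how the paper (via monotonicity of the variance and Doob's convergence theorem) concludes.
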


\begin{proof}
    Let $(\pi^k)_{k \in \N}$ be a sequence in $\mathcal M(\Lambda)$ and write $M^k$ for a martingale with law $\pi^k$.
    Since $\Lambda$ is compact, it follows
    that $\{ \mu \in \mathcal P_2(\R^d) \colon \exists \nu \in \Lambda \mbox{ with }\mu \le_c \nu \}$ is also compact.
    Therefore we find a subsequence $(\pi^{k_j})_{j \in \N}$ such that, for any finite subset $S \subseteq [0,1] \cap \mathbb Q$,
    \begin{equation}
        \Law(M_t^{k_j})_{t \in S} \to \tilde \pi^S \mbox{ weakly for }j\to\infty,    
    \end{equation}
    where $\tilde \pi^S$ is the law of a discrete-time martingale in $|S|$ time steps with values in $\R^d$.
    The family $(\tilde \pi^S)_{S \subset [0,1] \cap \mathbb Q, |S| < \infty}$ is a consistent family and we can apply Kolmogorov's extension theorem to obtain a probability $\tilde \pi$ on $\prod_{t \in [0,1] \cap \mathbb Q} \mathbb R^d$.
    Note that, for any $S \subseteq [0,1] \cap \mathbb Q$, the projection of $\tilde \pi$ onto the $S$-coordinates coincides with $\tilde \pi^S$.
    Hence $\tilde \pi$ is the law of a martingale $\tilde M = (\tilde M_t)_{t \in [0,1] \cap \Q}$ with terminal distribution $\Law(\tilde M_1) \in \Lambda$.
    By standard arguments, there exists $M$ where $M_t \coloneqq \lim_{q \searrow t, \, q \in \Q \cap [0,1]} \tilde M_q$ for $t \in [0,1]$ which is a c\`adl\`ag martingale (in the right-continuous version of the filtration).
    We claim that $\pi \coloneqq \Law(M)$ has the desired properties.

    As $t \mapsto \textrm{Var}(M_t)$ is non-decreasing there are at most countably many points of discontinuity.
    Let $U$ be a finite subset of the continuity points of $t \mapsto \textrm{Var}(M_t)$, which coincide with the continuity points of $t \mapsto \Law(M_t)$.
    Fix $N \in \N$ and note that, as all involved processes are martingales, $(M^{k_j})_{t \in \tilde S}$ converges for $j \to \infty$ in $\mathcal W_2$ to $(\tilde M_t)_{t \in \tilde S}$ uniformly for all $\tilde S \subseteq [0,1] \cap \Q$ with $|\tilde S|\le N$.
    Moreover, by Doob's martingale convergence theorem, we have, for any $t \in U$, that $\lim_{q \searrow t, \, \in [0,1] \cap \Q} \tilde M_q = M_t$ almost surely.
    We conclude that $(M^{k_j}_t)_{t \in U}$ converges in $\mathcal W_2$ to $(M_t)_{t \in U}$.
\end{proof}

\begin{proposition}
    \label{prop:fdd_convergence}
    Under \Crefrange{it:assumption_1}{it:assumption_5}, suppose moverover that there exists $R > 0$ such that $(\sigma^k)_{k \in \N}$ satisfies $\sigma^k_t(x) = \sigma^k_t(\frac{R x}{|x| \vee R})$ for $t \in [0, 1]$, $x \in \R^d$, $k \in \N$, and that $(\Sigma^k_t(x))_{k \in \N}$ converges pointwise for $(t,x) \in [0,1] \times \R^d$ to $\Sigma \colon [0,1] \times \R^d \to \R^{d \times d}$.
	
    Then the conclusion of \Cref{thm:fdd_convergence} holds.
\end{proposition}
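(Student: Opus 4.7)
The plan is to proceed in three steps: first, extract a limiting diffusion coefficient $\sigma$ from compactness of the antiderivatives $\Sigma^k$; second, obtain a subsequential martingale limit $M$ via tightness of $(X^k)$; third, identify $M$ as the unique strong solution of the SDE driven by $\sigma$, thereby upgrading subsequential convergence to full convergence.

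\emph{Extracting $\sigma$.} Under the truncation hypothesis together with \Cref{it:assumption_2}, $\sigma^k$ is bounded on $[0,1]\times\R^d$ uniformly in $k$, so $\{\Sigma^k\}$ is uniformly Lipschitz in $t$; combined with \Cref{it:assumption_1}, it is equicontinuous on $[0,1]\times K$ for every compact $K\subseteq\R^d$. The hypothesised pointwise convergence will improve via Arzel\`a--Ascoli to uniform convergence on compacts, with $\Sigma$ inheriting Lipschitz regularity in both variables. I will define $\sigma_t(x)^2\coloneqq\partial_t\Sigma_t(x)$ wherever the derivative exists; Lipschitz regularity in $x$ of $\Sigma$ transfers to local Lipschitz regularity in $x$ of $\sigma^2$, uniformly in $t$, via a difference-quotient argument. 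The lower bound from \Cref{it:assumption_4} survives the limit because the dyadic sets $A_k\coloneqq\bigcup_j[j2^{-k},j2^{-k}+2^{-k-1}]$ satisfy $\mathrm{Leb}([s,t]\cap A_k)\to(t-s)/2$, yielding $\Sigma_t(x)-\Sigma_s(x)\ge\tfrac{c_K}{2}(t-s)\id$ on compacts and hence $\sigma_t(x)^2\ge\tfrac{c_K}{2}\id$; the upper bound persists by uniform boundedness. The matrix $\sigma_t(x)$ is then taken as the positive-definite square root.

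\emph{Subsequential convergence and identification.} By \Cref{lem:marginal_curves} the $\W_2$-curves $(\mu^k_t)_t$ are equicontinuous, and Doob's maximal inequality combined with boundedness of $\sigma^k$ gives tightness of $(X^k)$ in the Skorokhod space; \Cref{lem:cpct-continuity-points} then produces a subsequence converging in finite-dimensional distributions, at continuity points, to a continuous martingale $M$ with $\Law(M_0)=\mu_0$ by \Cref{it:assumption_5}. To identify $M$, I will verify the Stroock--Varadhan martingale problem for $\tfrac12\mathrm{tr}(\sigma^2\nabla^2\cdot)$: for $f\in C^\infty_c(\R^d)$ the process $f(X^k_t)-\tfrac12\int_0^t\mathrm{tr}(\sigma^k_s(X^k_s)^2\nabla^2 f(X^k_s))\D s$ is a martingale, and the central task is to pass to the limit in the time integral. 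On a dyadic partition $\{t_i\}$ I will use
\begin{equation*}
\int_{t_i}^{t_{i+1}}\sigma^k_s(X^k_s)^2\D s \;\approx\; \Sigma^k_{t_{i+1}}(X^k_{t_i})-\Sigma^k_{t_i}(X^k_{t_i}),
\end{equation*}
with error bounded by the local Lipschitz norm of $\Sigma^k$ times a uniform-in-$k$ path modulus for $X^k$. Uniform convergence $\Sigma^k\to\Sigma$ together with $X^k_{t_i}\to M_{t_i}$ in law then yields the limit $\Sigma_{t_{i+1}}(M_{t_i})-\Sigma_{t_i}(M_{t_i})=\int_{t_i}^{t_{i+1}}\sigma_s(M_{t_i})^2\D s$, and continuity of $M$ together with Lipschitz-in-$x$ of $\sigma^2$ will replace $M_{t_i}$ by $M_s$ as the mesh shrinks.

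\emph{Uniqueness and full convergence.} By \Cref{rem:square-root}, Lipschitz-in-$x$ of $\sigma^2$ combined with its positive-definite lower bound gives that $\sigma$ itself is locally Lipschitz in $x$, uniformly in $t$. Because the truncation renders $\sigma$ globally bounded on $\R^d$ (and in fact globally Lipschitz, as the projection $x\mapsto Rx/(|x|\vee R)$ is Lipschitz on $\{|x|\ge R\}$), standard SDE theory yields existence and uniqueness of a strong solution $X$ to $\D X_t=\sigma_t(X_t)\D B_t$ with $X_0\sim\mu_0$ on any probability space supporting $(B,\xi)$. Uniqueness in law then forces every subsequential limit of $(X^k)$ to coincide with $X$, giving convergence of the full sequence in finite-dimensional distributions. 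The hardest step is the identification: since $\sigma^k$ depends on $k$ only through the antiderivative $\Sigma^k$, dominated convergence is not directly available, and the Riemann-sum surgery must be made quantitative using a uniform-in-$k$ path modulus for $(X^k)$ derived from $\W_2$-equicontinuity of the marginals and the It\^o isometry.
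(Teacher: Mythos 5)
Your outline is correct, but the central identification step follows a genuinely different route from the paper. You pass to the limit in the Stroock--Varadhan martingale problem, replacing the time integral of $\sigma^k_s(X^k_s)^2$ by the telescoping increments $\Sigma^k_{t_{i+1}}(X^k_{t_i})-\Sigma^k_{t_i}(X^k_{t_i})$ with an $O(2^{-m/2})$ total error from the uniform Lipschitz bound and the It\^o isometry; identification of the limit then rests on well-posedness of the martingale problem (available here since the truncation makes $\sigma^2$ globally Lipschitz, bounded and uniformly elliptic), after which Yamada--Watanabe and a sub-subsequence argument give the strong solution and full convergence. The paper instead never invokes the martingale problem at this stage: it compares the true dyadic transition kernels $\pi^n_{m,k}$ of $X^n$ with frozen-coefficient Gaussians $\bar\pi^n_{m,k}$ (using \Cref{lem:Lipschitz_continuity_gaussians} and the convergence of $\Sigma^n$), and builds on a single probability space auxiliary processes $S^m$ that solve the limiting SDE between dyadic times and jump via $\W_2$-optimal maps at dyadic times so as to reproduce the joint dyadic law of $X^{n(m)}$; a Gr\"onwall argument then gives $L^2$-convergence of $S^m$ to the strong solution $S$, which yields finite-dimensional convergence and the identification in one stroke. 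Your Riemann-sum error estimate is quantitatively the same as the paper's bound on $\W_2(\pi^n_{m,k},\bar\pi^n_{m,k})$, so the approach is sound; what the coupling buys the paper is an explicit $L^2$-convergent construction that avoids appealing to uniqueness for the martingale problem, while your route is the more classical one. Two small points to tighten: \Cref{lem:cpct-continuity-points} provides finite-dimensional precompactness directly via Kolmogorov extension, not tightness on Skorokhod space, so the tightness claim is neither needed nor what that lemma delivers (and continuity of the subsequential limit, if you want it before identification, requires a fourth-moment/BDG estimate rather than the second-moment bound alone); and when differentiating $\Sigma$ in $t$ you must produce a single version of $\rho_t(x)=\partial_t\Sigma_t(x)$ that is Lipschitz in $x$ for every $t$ --- the exceptional $t$-null set a priori depends on $x$, which the paper handles by the lower-semicontinuity argument in \Cref{lem:Lipschitz_continuity_sigma} and a choice of $L^2$-representative, and which you can handle by differentiating along a countable dense set of $x$ and extending by the uniform Lipschitz bound. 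Your explicit equidistribution argument for the lower bound coming from \Cref{it:assumption_4} is correct and in fact more detailed than the paper's.
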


We break the proof of \Cref{prop:fdd_convergence} into the following lemmas.

\begin{lemma}
    \label{lem:Lipschitz_continuity_sigma}
    In the setting of \Cref{prop:fdd_convergence}, there exists $\sigma \colon [0,1] \times \R^d \to \R^{d \times d}$ with $\Sigma_t(x) = \int_0^t\sigma_s(x)^2 \D s$ such that, uniformly in $t \in [0, 1]$, $x \mapsto \sigma_t(x)^2$ is locally Lipschitz continuous and, for each $x \in \R^d$, there exist constants $c_x, C_x > 0$ sarisfying $c_x \, \id \leq \sigma_t(x)^2 \leq C_x \, \id$.
\end{lemma}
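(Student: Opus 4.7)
The plan is to produce $\sigma$ as (essentially) the time-derivative of $\Sigma$, using a weak-$\ast$ compactness argument in $L^\infty([0,1];\R^{d\times d})$ applied to the bounded sequence $(\sigma^k_\cdot(x)^2)_{k \in \N}$ for each $x$, and then to transfer the pointwise bounds and Lipschitz estimates from $(\sigma^k)$ to the limit.

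As a preliminary step, I would use \Cref{it:assumption_1} and \Cref{it:assumption_2} to show that the integrals $\Sigma^k$ are equi-Lipschitz jointly in $(t,x)$ on compacts: on each compact $K \subseteq \R^d$ we have $\| \Sigma^k_t(x) - \Sigma^k_s(x) \| \le M_K(t - s)$ from the uniform bound on $\|\sigma^k\|$, and $\| \Sigma^k_t(x) - \Sigma^k_s(x) - \Sigma^k_t(y) + \Sigma^k_s(y) \| \le L_K (t-s)|x - y|$ from the uniform local Lipschitz bound on $\sigma^{k}(\cdot)^2$. These estimates pass to the pointwise limit and are therefore inherited by $\Sigma$.

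Next I would extract $\sigma$ itself. Fix a countable dense set $D \subseteq \R^d$. For each $x \in D$, the truncation hypothesis together with \Cref{it:assumption_2} gives that $(\sigma^k_\cdot(x)^2)_{k \in \N}$ is bounded in $L^\infty([0,1];\R^{d\times d})$. By Banach--Alaoglu and a diagonal extraction along $D$, I obtain a subsequence $(k_j)$ and functions $(\sigma_\cdot(x)^2)_{x \in D}$ such that $\sigma^{k_j}_\cdot(x)^2 \to \sigma_\cdot(x)^2$ weakly-$\ast$ for every $x \in D$; testing against $\mathds{1}_{[0,t]}$ and comparing with the assumed pointwise convergence of $\Sigma^k$ yields the representation $\Sigma_t(x) = \int_0^t \sigma_s(x)^2 \, \D s$ for $x \in D$. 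Weak-$\ast$ lower semi-continuity transfers the upper bound $\|\sigma_t(x)^2\| \le M_K$ and the Lipschitz estimate $\|\sigma_t(x)^2 - \sigma_t(y)^2 \| \le L_K |x - y|$ to the limit as a.e.-in-$t$ statements. Taking the union of the countably many exceptional null sets (over pairs in $D$ and a countable exhausting family of compacts) yields a single null set $N \subseteq [0,1]$ outside of which $x \mapsto \sigma_t(x)^2$ is Lipschitz and bounded on every compact; for such $t \notin N$ extend uniquely by continuity to all of $\R^d$, and the integral identity then extends to all $x$ by dominated convergence. On $N$ I would redefine $\sigma_t(x)^2$ arbitrarily (e.g.\ as a right-limit in $t$), which does not affect the integral.

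For the lower bound I would invoke \Cref{it:assumption_4}: for each compact $K$ there is $c_K > 0$ with $\sigma^k_t(x)^2 \ge c_K \, \id$ whenever $x \in K$ and $t$ lies in the dyadic set $D_k \coloneqq \bigcup_{j=0}^{2^k - 1}[j2^{-k}, j2^{-k} + 2^{-k-1}]$. Since $|D_k \cap [s,t]| \to (t-s)/2$ for every fixed $0 \le s < t$, integrating and passing to the pointwise limit (using closedness of the positive semidefinite cone) gives $\Sigma_t(x) - \Sigma_s(x) \ge (c_K/2)(t-s) \, \id$, from which $\sigma_t(x)^2 \ge (c_K/2) \, \id$ for a.e.\ $t$; I absorb the exceptional set into $N$. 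The main obstacle is handling these null sets carefully so that the Lipschitz and spectral bounds hold for every $t \in [0,1]$ rather than merely a.e.: separating the $t$-dependence from the $x$-dependence via the countable dense set $D$, and exploiting the fact that $\Sigma$ is insensitive to modifications of $\sigma$ on time-null sets, is what makes this repair possible.
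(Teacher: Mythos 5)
Your proposal is correct and reaches the same conclusion by the same overall strategy as the paper: realize $\sigma_\cdot(x)^2$ as the time-density of the Lipschitz curve $t \mapsto \Sigma_t(x)$, transfer the upper bound and the Lipschitz-in-$x$ estimate from $(\sigma^k)^2$ to this density by a lower semicontinuity argument, choose a good representative off a time-null set, and obtain the lower bound from \Cref{it:assumption_4} together with the fact that the dyadic union $\bigcup_j [j2^{-k}, j2^{-k}+2^{-k-1}]$ has asymptotic density $1/2$ in every interval. The implementation of the middle step differs: the paper treats $(t,x)$ jointly, obtaining the density $\rho$ directly from absolute continuity of $t \mapsto \Sigma_t(x)$ and then applying weak lower semicontinuity in $L^2([0,1]\times B_R)$ of the convex, lower semicontinuous Lipschitz-seminorm functional $F$ (via Bauschke--Combettes) to conclude that $\rho$ is a.e.\ Lipschitz in $x$; you instead apply Banach--Alaoglu in $L^\infty([0,1])$ pointwise over a countable dense set $D$ of space points, use weak-$\ast$ lower semicontinuity of the $L^\infty$-norm for each pair in $D$, collect countably many null sets, and extend by Lipschitz continuity from $D$ to $\R^d$. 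Your route is more elementary and self-contained (no appeal to weak lower semicontinuity of convex functionals on $L^2$), at the price of a more delicate bookkeeping of exceptional null sets, which you handle correctly; the paper's route gets the spatial regularity of the density in one stroke but is terser about the choice of representative. Both arguments are complete modulo the same routine details (positive semidefiniteness of the density so that the square root exists, and the harmless redefinition on the time-null set, which does not alter $\Sigma$).
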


\begin{proof}
	The specific form of the $\sigma^k$ allows us to restrict to the ball $B_R \coloneqq \{ x \in \R^d \colon |x| < R \}$ of radius $R > 0$.
    As the limit of Lipschitz functions, $t \mapsto \Sigma_t(x)$ is Lipschitz continuous, and so are the entries $(\Sigma^{i,j})_{i,j = 1}^d$ of $\Sigma$.
    Therefore there exist densities
    \begin{equation}
        \rho_t(x) \coloneqq (\rho^{i,j}_t(x))_{i,j = 1}^d \in \R^{d \times d},  \mbox{ where } \int_{t_0}^{t_1} \rho_t(x) \, \D t = \Sigma_{t_1}(x) - \Sigma_{t_0}(x), \quad x \in \R^d.
    \end{equation}
    We define $\sigma$ as the matrix square root of $\rho$, which is possible as $\rho$ is a.s.\ positive semidefinite.
    Next, we define the Lipschitz norm of a function $g \colon [0,1] \times B_R \to \R^{d \times d}$ as
    \begin{align}
        F(g)  \coloneqq& \operatorname{esssup}_{x,y \in B_R, t \in [0,1]} \frac{\|g_t(x) - g_t(y)\|}{|x - y|},
    \end{align}
    where the essential supremum is taken with respect to $\D t \otimes \D x$.
    Since $F \colon L^2([0,1] \times B_R; \R^{d \times d}) \to \R_+ \cup \{ \infty\}$ is lower semicontinuous and convex, we have by \cite[Theorem 9.1]{BaCo11} that $F$ is weakly lower semicontinuous, and in particular
    \begin{equation}
        \liminf_{j \to \infty} F((\sigma^{k_j})^2) \ge F(\rho) \eqqcolon L,
    \end{equation}
    which implies that $x \mapsto \rho_t(x)$ is $dt \otimes dx$-almost everywhere $L$-Lipschitz continuous.
    Thus, by choosing a suitable $L^2$-representative of $\rho$ we can assume without loss of generality that $x \mapsto \rho_t(x)$ is $L$-Lipschitz continuous in $x$ for every $t \in [0, 1]$, and that $\sup_{t \in [0,1]} \|\rho_t(x)\| < \infty$.
    By Lipschitz continuity of $t \mapsto \Sigma^k_t(x)$ and $x \mapsto \rho_t(x)$, we have that, for every $x \in B_R$, $\xi \in \R^d$ and $0 \leq t_0 \leq t_1 \leq 1$,
    \begin{equation}
        \int_{t_0}^{t_1} \xi^\textrm{T} \rho_t(x) \xi \, \D t = \lim_{k \to \infty} \xi^\textrm{T} \!\left( \Sigma^k_{t_1}(x) - \Sigma^k_{t_0}(x) \right)\! \xi.
    \end{equation}
    Therefore, by \Cref{it:assumption_4}, there exists a constant $c > 0$ such that $\rho_t(x) \ge c \, \id$ for every $x \in B_R$ and Lebesgue-almost every $t \in [0,1]$.
 \end{proof}

The second result that we will make use of in the proof of \Cref{prop:fdd_convergence} is the Lipschitz continuity of the $\W_2$-distance between Gaussian laws with respect to covariance matrices. From now on, we let $\mathcal N(\mu, \sigma^2)$ denote the law of a normal random variable with mean $\mu \in \R^d$ and covariance matrix $\sigma^2 \in \R^{d \times d}$.

\begin{lemma}
    \label{lem:Lipschitz_continuity_gaussians}
    Let $C, \delta > 0$.
    Then the map
    \begin{equation}
        \R^{d \times d} \times \R^{d \times d} \ni (\sigma^2, (\sigma')^2) \mapsto \mathcal W_2\!\left( \mathcal N(0,\sigma^2), \mathcal N(0,(\sigma')^2)\right)\!    
    \end{equation}
    is Lipschitz continuous on the set $\{(\sigma, \sigma^\prime) \colon \; \delta \, \id \leq \sigma^2 \leq C \, \id, \; \delta \, \id \leq (\sigma^\prime)^2 \leq C \, \id \}$ equipped with the product of the Hilbert--Schmidt norm.
\end{lemma}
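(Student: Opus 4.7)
The plan is to reduce the claim to the Lipschitz continuity of the matrix square root on the set of positive definite matrices with eigenvalues bounded away from zero and above, via a triangle inequality argument combined with an explicit Gaussian coupling.

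First, I would apply the triangle inequality for $\W_2$: for positive definite matrices $A_1, B_1, A_2, B_2$ in the set under consideration,
\begin{equation}
\bigl| \W_2(\mathcal N(0, A_1), \mathcal N(0, B_1)) - \W_2(\mathcal N(0, A_2), \mathcal N(0, B_2))\bigr| \leq \W_2(\mathcal N(0, A_1), \mathcal N(0, A_2)) + \W_2(\mathcal N(0, B_1), \mathcal N(0, B_2)).
\end{equation}
It therefore suffices to establish an estimate of the form $\W_2(\mathcal N(0, A), \mathcal N(0, B)) \leq L\, \|A - B\|_\HS$, for some constant $L$ depending only on $d$, $\delta$ and $C$, whenever $\delta \id \leq A, B \leq C\, \id$.

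Next, I would produce such an estimate via the synchronous Gaussian coupling. Taking $Z \sim \mathcal N(0, \id)$, the random variables $A^{1/2} Z$ and $B^{1/2} Z$ have laws $\mathcal N(0, A)$ and $\mathcal N(0, B)$ respectively, hence
\begin{equation}
\W_2^2(\mathcal N(0, A), \mathcal N(0, B)) \leq \E\bigl[ |A^{1/2} Z - B^{1/2} Z|^2 \bigr] = \|A^{1/2} - B^{1/2}\|_\HS^2.
\end{equation}
By \Cref{rem:square-root}(ii), on the set $\{A \colon \delta \id \leq A \leq C\, \id\}$ the matrix square root is Lipschitz continuous, with Lipschitz constant $L = L(\delta, C)$. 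Substituting this bound and combining with the triangle inequality above yields Lipschitz continuity of the map in question with constant $2L$.

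The single nontrivial ingredient is the Lipschitz continuity of the matrix square root, which crucially uses the lower bound $\delta\, \id \leq A$; without such a bound one only has the $1/2$-H\"older estimate noted in \Cref{rem:square-root}(i), which would be insufficient. All other steps — the triangle inequality for $\W_2$, the coupling bound, and the identification $\E[|(A^{1/2} - B^{1/2}) Z|^2] = \|A^{1/2} - B^{1/2}\|_\HS^2$ for $Z \sim \mathcal N(0, \id)$ — are standard.
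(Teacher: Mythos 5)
Your proof is correct, and it takes a genuinely different route from the paper. The paper invokes the explicit Gelbrich--Givens--Shortt formula $\W_2^2(\mathcal N(0,A),\mathcal N(0,B)) = \trace\bigl(A + B - 2(A^{1/2} B A^{1/2})^{1/2}\bigr)$ and then appeals to \Cref{rem:square-root} to conclude; making that argument fully rigorous requires checking that the composite map through $(A,B)\mapsto (A^{1/2}BA^{1/2})^{1/2}$ is Lipschitz on the given set and then handling the outer square root of the trace. You avoid the exact formula entirely: the triangle inequality for the metric $\W_2$ reduces the claim to the one-sided bound $\W_2(\mathcal N(0,A),\mathcal N(0,B)) \leq \lVert A^{1/2}-B^{1/2}\rVert_{\HS}$, which you obtain from the synchronous coupling $(A^{1/2}Z, B^{1/2}Z)$ together with the identity $\E[\lvert M Z\rvert^2] = \trace(MM^\top)$ for $Z \sim \mathcal N(0,\id)$, and you then conclude with the same key ingredient as the paper, namely the Lipschitz continuity of the matrix square root on $\{\delta\,\id \leq A \leq C\,\id\}$ from \Cref{rem:square-root}(ii). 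Your version is more elementary and self-contained (no external citation needed, and the upper bound on the eigenvalues is not even used), while the paper's version is shorter on the page because it delegates the computation to \cite{GiSh84}; both arguments correctly isolate the lower eigenvalue bound $\delta\,\id \leq A$ as the essential hypothesis.
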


\begin{proof}
    As shown in \cite[Proposition 7]{GiSh84}, the squared Wasserstein-2 distance between two centered Gaussians with covariance matrices $\sigma, \sigma^\prime$ is explicitly given by
    \begin{equation}
        \label{eq:Wasserstein_2_gaussians}        
        \trace \!\left( \sigma^2 + (\sigma')^2 - 2\!\left( \sigma (\sigma')^2 \sigma \right)\!^\frac12 \right)\!,
    \end{equation}
    from which the assertion follows, in light of \Cref{rem:square-root}.
\end{proof}

\begin{proof}[Proof of \Cref{prop:fdd_convergence}]
	For each $k \in \N$ and $t \in [0, 1]$, the functions $x \mapsto \sigma^k_t(x)$ are bounded and continuous. Thus, by \cite[Theorem 6.1.7]{StVa79}, there exists a weak solution $X^k$ of \eqref{eq:sde-sequence} for all $k \in \N$.
    By Lemma \ref{lem:marginal_curves} there exists a subsequence, still denoted by $(X^k)_{k \in \N}$, such that the curves $(\mu^k_t)_{t \in [0,1]}$, $k \in \N$, converge to a $\mathcal W_2$-continuous curve $(\mu_t)_{t \in [0,1]}$.
    After a deterministic time-change, if necessary, we can assume without loss of generality that $t \mapsto \int |x|^2 \, \mu_t(dx)$ is 1-Lipschitz.
    
    By Lemma \ref{lem:Lipschitz_continuity_sigma}, we can find a diffusion coefficient $\sigma$ with the desired properties. Then by \Cref{rem:square-root}, the map $x \mapsto \sigma_t(x)$ is Lipschitz continuous uniformly in $t \in [0,1]$, with some Lipschitz constant $\tilde L$. Thus there exists a unique strong solution $X$ of $\D X_t = \sigma(X_t) \D B_t$ with $X_0 \sim \mu_0$.
    The particular form of the $(\sigma^k)_{k \in \N}$ now gives us the following properties. \Cref{it:assumption_1} implies that $x \mapsto \sigma^k_t(x)^2$ is globally Lipschitz continuous uniformly in $t \in [0, 1]$ and $k \in \N$. Thus, by \Cref{rem:square-root}, $x \mapsto \sigma^k_t(x)$ is globally $1/2$-H\"older continuous uniformly in $t \in [0,1]$ and $k \in \N$, with some H\"older constant $L$.
    \Cref{it:assumption_2} implies that there exists a constant $C \geq 0$ such that $\|\sigma_t^k(x)\| \leq C$ for all $t \in [0, 1]$, $x \in \R^d$, $k \in \N$.
    
    Now, for $m \in \N$ and $k = 0,\ldots,2^m$, define $t_k^m \coloneqq k 2^{-m}$ and, for $n \in \N$, consider the kernels
\begin{align}
    \pi_{m,k}^{n}(x) \coloneqq \Law( X^{n}_{t^m_{k+1}} \mid X^{n}_{t^m_k} = x),
    \qquad \bar \pi_{m,k}^{n}(x) \coloneqq \Law( Y^{n}_{t^m_{k+1}} \mid Y^{n}_{t^m_k} = x )
\end{align}
where $Y^{n}_{t^m_k} = x$ and $Y^{n}_{t^n_{k + 1}} = \int_{t^m_k}^{t^{m}_{k + 1}}\sigma_t^{n}(x) \, \D B^n_t$, for $x \in \R^d$.
Observe that, for each $n \in \N$, $x \in \R^d$,
\begin{equation}
    \label{eq:bar_pi_normal_distribution}
    \bar \pi^{n}_{m,k}(x) = \mathcal N\!\left(x, \int_{t^m_k}^{t^m_{k+1}} \sigma_t^{n}(x)^2 \, \D t \right)\! = \mathcal N\!\left(x, \Sigma_{t^m_{k+1}}^{n}(x) - \Sigma_{t^m_k}^{n}(x) \right)\!. 
\end{equation}
In a similar manner we define $Y$, $\pi_{m,k}$ and $\bar \pi_{m,k}$.

Note that $(\Sigma^n)_{n \in \N}$ converges uniformly on $B_R$, and therefore on $\R^d$, to $\Sigma$. Thus \eqref{eq:bar_pi_normal_distribution} and Lemma \ref{lem:Lipschitz_continuity_gaussians} imply the weak convergence
\begin{equation}
    \label{eq:bar_pi_convergence}
    \lim_{n \to \infty}\bar \pi^n_{m,k}(x) = \bar \pi_{m,k}(x)\quad \text{uniformly in }x \in \R^d, \mbox{ uniformly in }k \mbox{ and }m.
\end{equation}
Combining this with the bound $\|\Sigma_t\| \le C^2$ gives
\begin{equation}
    \label{eq:prop_bar_pi_convergence_expectation}
    \lim_{n \to \infty}
    \sum_{k = 0}^{2^m - 1} \mathbb E \!\left[ \mathcal W_2^2\!\left(\bar \pi^n_{m,k}(X^n_{t^m_k}), \bar \pi_{m,k}(X^n_{t^m_k})\right)\! \right]\! = 0.
\end{equation}
In the following we choose $n \coloneqq n(m) \in \N, n(m) \ge m$ sufficiently large such that for this particular $n$ the sum in \eqref{eq:prop_bar_pi_convergence_expectation} is smaller than $2^{-m}$.
We estimate
\begin{align}
    \nonumber
    \mathcal W_2^2\!\left( \pi^n_{m,k}(x), \bar \pi^n_{m,k}(x) \right)\! 
    &\le 
    \mathbb E \!\left[ |X^n_{t^m_{k+1}} - Y^n_{t^m_{k+1}} |^2 \mid X^n_{t^m_k} = x = Y^n_{t^m_k} \right]\!
    \\
    \nonumber
    &=
    \mathbb E \!\left[ \int_{t^m_k}^{t^m_{k+1}} \|\sigma^n_s(X^n_s) - \sigma^n_s(x) \|^2 \, \D s \mid X^n_{t^m_k} = x \right]\!
    \\
    \nonumber
    &\le
    L \mathbb E \!\left[ \int_{t^m_k}^{t^m_{k+1}} | X^n_s - X^n_{t^m_k} | \, \D s \mid X^n_{t^m_k} = x \right]\!,
    \end{align}
using the It\^o isometry and the H\"older continuity of $\sigma^n$. Now, since $X^n$ is a square-integrable martingale, we have
    \begin{align}
    \mathcal W_2^2\!\left( \pi^n_{m,k}(x), \bar \pi^n_{m,k}(x) \right)\! 
    &\le
    L \mathbb E \!\left[ \int_{t^m_k}^{t^m_{k+1}} | X^n_{t^m_{k+1}} - X^n_{t^m_k} | \, \D s \mid X^n_{t^m_k} = x \right]\!
    \\
    \nonumber
    &=
    L 2^{-m} \mathbb E \!\left[ |X^n_{t^m_{k+1}} - X^n_{t^m_k}| \mid  X^n_{t^m_k} = x \right]\!
    \\
    \nonumber
    &=
    L 2^{-m} \mathbb E \!\left[  \!\left(\int_{t^m_k}^{t^m_{k+1}} \| \sigma^n_s(X^n_s) \|^2 \, \D s\right)\!^\frac12 \mid X^n_{t^m_k} = x  \right]\!
    \\
    \label{eq:prop_pi_n_m_convergence}
    &\le
    L C 2^{-\frac{3m}{2}},
\end{align}
where we use the It\^o isometry again, as well as the bound on the norm of $\sigma^n$. By the same line of reasoning, and using the Lipschitz property of $\sigma$, we find that
\begin{equation}\label{eq:wass-2-bound-sigma}
	\mathcal W_2^2(\pi_{m,k}(x), \bar \pi_{m,k}(x)) \leq \tilde L^2 C^2 2^{-2m}.
\end{equation}
Hence, for some constant $\tilde C > 0$, the triangle inequality together with the above estimates yields
\begin{equation}
    \label{eq:prop_sum_W2_vanishes}
    \sum_{k = 0}^{2^m - 1} \mathbb E \!\left[ \mathcal W_2^2\!\left(\pi^{n(m)}_{m,k}(X^{n(m)}_{t^m_k}), \pi_{m,k}(X^{n(m)}_{t^m_k})\right)\! \right]\! 
    \le
    \tilde C 2^{-\frac{m}{2}}.
\end{equation}

Now fix a probability space $(\Omega, \F, \P)$ supporting a standard $\R^d$-valued Brownian motion $B$ and independent random variables $\xi_0 \sim \mu_0$ and $\xi_k \sim \mu^k_0$, $k \in \N$.
For each $m \in \N$, we define an auxiliary process $S^m$ that has c\`adl\`ag paths and the same marginals as $X^{n(m)}$ at the $m$-dyadics, where $n(m)$ is fixed after \eqref{eq:prop_bar_pi_convergence_expectation}.
Set $S_0^m = X_0^{n(m)}$, define $S^m$ as the unique strong solution of $\D S^m_t = \sigma_t(S^m_t) \D B_t$ on the interval $[0, t^m_1)$ and, on each interval $[t^m_k, t^m_{k + 1})$, $k = 1,\ldots,2^m-1$, the unique strong solution of 
\begin{equation}
    \label{eq:prop_S_m_definition}
   \D S_t^m = \sigma_t(S_t^m) \, \D B_t, \quad S_{t^m_{k}}^m = T^m_k(S^m_{t^m_{k}-}),
\end{equation}
where $T^m_k$ is the $\mathcal W_2$-optimal map between $\pi_{m, k - 1}(S_{t^m_{k-1}}^m)$ and $\pi^n_{m,k - 1}(S_{t^m_{k-1}}^m)$.
The discrete-time jump process $Z^{m}_{t} \coloneqq \sum_{l = 1}^{2^m} \mathds 1_{[0,t]}(t^m_{l})(S_{t^m_{l}}^{m} - S_{t^{m}_{l}-}^{m})$ is a martingale in the underlying filtration and $\tilde S^m \coloneqq S^{m} - Z^{m}$ is a continuous martingale.
Indeed
\begin{align}
    \mathbb E \!\left[ Z^{m}_{\hat t} \mid \mathcal F_t \right]\! 
    &= 
    Z_t^{m} + \sum_{l = 1}^{2^m} \mathds 1_{(t,\hat t]}(t^m_{l})\mathbb E \!\left[ S_{t^m_{l}}^{m} - S_{t^m_{l}-}^{m} \mid \mathcal F_t \right]\! 
    \\
    &= Z_t^{m} +  \sum_{l = 1}^{2^m} \mathds 1_{(t,\hat t]}(t^m_{l}) \mathbb E \!\left[ S_{t^m_{l-1}}^{m} - S_{t^m_{l-1}}^{m} \mid \mathcal F_t \right]\! = Z_t^{m}.
\end{align}
Moreover, by \eqref{eq:prop_sum_W2_vanishes}, $Z^{m}$ admits the following estimate
\begin{align}
    \mathbb E \!\left[ | Z^{m}_1 |^2 \right]\!^\frac12 
    = \mathbb E \!\left[ \sum_{k = 0}^{2^m-1} \mathcal W_2^2\!\left(\pi_{m,k}(X^{n(m)}_{t^m_k}),\pi^{n}_{m,k}(X_{t^m_k}^{n(m)})\right)\! \right]\!^\frac12 
    \le
    \tilde C^\frac12 2^{- \frac{m}{4}},
\end{align}
whence, by Doob's maximal inequality, the term $\mathbb E [\sup_{t \in [0,1]} |Z^m_t|]$ also vanishes as $m \to \infty$.
We claim that $(\tilde S^{m})_{m \in \N}$ is a Cauchy sequence.
Indeed, using the Lipschitz property of $\sigma$ and the bound \eqref{eq:wass-2-bound-sigma}, for $m, \hat m \in \N$, $\hat m \ge m$ we have
\begin{align}
    \mathbb E \!\left[ |\tilde S^{m}_t - \tilde S^{\hat m}_t|^2 \right]\!
    &\le
    2
    \!\left( 
        \mathbb E\!\left[ |X^{n(m)}_0 - X^{n(\hat m)}_0|^2 \right]\! +
        \mathbb E\!\left[ \int_0^t |\sigma_t(S^{m}_{\hat t}) - \sigma_t(S^{\hat m}_{\hat t})|^2 \, \D \hat t\right]\!
    \right)\!
    \\
    &\le
    2
    \!\left( 
        \mathbb E\!\left[ |X^{n(m)}_0 - X^{n(\hat m)}_0|^2 \right]\! + \tilde L^2
        \mathbb E\!\left[ \int_0^t |S^{m}_{\hat t} - S^{\hat m}_{\hat t}|^2 \, \D \hat t\right]\!
    \right)\!
    \\
    &\le
    2
    \!\left( 
        \mathbb E\!\left[ |X^{n(m)}_0 - X^{n(\hat m)}_0|^2 \right]\! +
        3 \tilde L^2 
        \!\left(
            \mathbb E \!\left[ \int_0^t |\tilde S^{m}_{\hat t} - \tilde S^{\hat m}_{\hat t}|^2 \, \D \hat t\right]\!
            + \tilde L^2 C^2 2^{1 - 2m}
        \right)\!
    \right)\!.
\end{align}
By Gr\"onwall's lemma, we have that $(\tilde S^{m}_1)_{m \in \N}$ is an $L^2$-Cauchy sequence.
Therefore, there exists a continuous $L^2$-martingale $S$ such that $(\tilde S^{m}_1)_{m \in \N}$ converges in $L^2$ to $S_1$.
As $Z^m$ vanishes uniformly, we get
\begin{equation}
    \lim_{m \to \infty} \mathbb E \!\left[ \sup_{t \in [0,1]} |S^m_t - S_t|^2 \right]\! = 0.
\end{equation}
Since $S^m_t \sim \mu_t$ for $t \in \{0,2^{-m},\ldots,1\}$, we have by continuity of $t \mapsto \mu_t$ that $S_t \sim \mu_t$ for every $t \in [0,1]$.
By the Lipschitz property of $\sigma$, for any $t \in [0, 1]$, we find that
\begin{equation}
    \lim_{m \to \infty} \mathbb E \!\left[ \int_0^t |\sigma_t(S^m_s) - \sigma_t(S_s)|^2 \, \D s \right]\! = 0.
\end{equation}
Thus, for any $t \in [0, 1]$, the It\^o isometry yields
\begin{equation}
    \int_0^t \sigma_s(S_s) \, \D B_s = \lim_{m \to \infty} \int \sigma_s(S^m_s) \, \D B_s = \lim_{m \to \infty} \tilde S^m_t - X^{n(m)}_0 = S_t - S_0, 
\end{equation}
and so $S$ is the unique strong solution of $dS_t = \sigma_t(S_t) \, \D B_t$, with $S_0 \sim \mu_0$.

Fix $\varepsilon > 0$ and $\hat m \in \N$.
Then there exists $M \ge \hat m$ such that for all $m \ge M$
\begin{equation}
    \mathbb E \!\left[ \sup_{t \in [0,1]} |S^m_t - S_t| \right]\! < \varepsilon.
\end{equation}
Recall that by construction we have $(S^m_0, S^m_{t^{\hat m}_1},\ldots, S^m_1) = (X^{n(m)}_0, X^{n(m)}_{t^{\hat m}_1}, \ldots, X^{n(m)}_1)$ in law, from which we deduce that $(X^{n(m)})_{m \in \N}$ converges in finite dimensional distributions to $S$.

Finally, to see uniqueness of the limit, note that by Lemma \ref{lem:cpct-continuity-points} the sequence $(\Law(X^n))_{n \in \N}$ is precompact with respect to convergence in finite dimensional distributions.
By the first part of the proof any subsequence of $(X^n)_{n \in \N}$ admits a subsequence that converges in finite dimensional distributions to $S$.
Hence, we conclude that $\Law(S)$ is the unique limit by recalling that the finite dimensional distributions separate points on the space of probability measures on the Skorohod space $\mathcal D([0,1];\R^d)$.
\end{proof}

Having established \Cref{prop:fdd_convergence}, we now extend the result from compact subsets to the whole of $\R^d$ in order to complete the proof of \Cref{thm:fdd_convergence}.

\begin{proof}[Proof of Theorem \ref{thm:fdd_convergence}]
    In order to apply Proposition \ref{prop:fdd_convergence}, for any radius $R > 0$, define the diffusion coefficients $(\sigma^{n,R})_{n \in \N}$ by
    \begin{equation}
        \sigma_t^{n,R}(x) \coloneqq
        \begin{cases}
            \sigma_t^n(x) & |x| \le R, \\
            \sigma_t^n\!\left(R \frac{x}{|x|}\right)\! & \text{else}.
        \end{cases}
    \end{equation}
    For each $n \in \N$, let $\tau^{n, R} \coloneqq \inf \{t > 0 \colon \; |X^n_t| \geq R\}$ and define a process $X^{n, R}$ by
    \begin{equation}\label{eq:thm_fdd_convergence_restricted_sigma}
    	X^{n, R}_t \coloneqq X^n_{t \wedge \tau^{n, R}} + \int_{\tau^{n, R}}^{t \wedge \tau^{n, R}}\sigma^{n, R}_t(X^{n,R}_t) \D B^n_t, \quad t \in [0, 1].
    \end{equation}
    Then $X^{n, R}$ is a weak solution of the SDE $\D X^{n, R}_t = \sigma^{n, R}_t(X^{n, R}_t) \D B^n_t$ with $X^{n, R}_0 \sim \mu^n_0$.
    The map $x \mapsto \sigma_t^{n,R}(x)^2$ is both Lipschitz continuous and bounded, uniformly in $(t,n) \in [0,1] \times \N$. By \Cref{it:assumption_3}, $C \coloneqq \sup_{n \in \N} \mathbb E[|X^n_1|^2] < \infty$, and thus by \eqref{eq:thm_fdd_convergence_restricted_sigma} and Doob's martingale inequality,
    \begin{equation}
        \label{eq:thm_fdd_convergence_weak1}
        \mathbb E \!\left[ \sup_{t \in [0,1]} |X^{n,R}_t - X^n_t| \wedge 1 \right]\! \le \mathbb P \!\left( \sup_{t \in [0,1]} |X^{n}_t| \ge R \right)\! \le \frac{C}{R^2}.
    \end{equation}
    Moreover, $\sup_{n \in \N} \E[|X^{n, R}_1|^2] \leq C$.
    
    We see that the conditions of \Cref{prop:fdd_convergence} are fulfilled, and thus there exists a diffusion coefficient $\sigma^{R}$ such that the SDE $dX^R_t = \sigma_t^R(X^R_t) \, \D B_t$ admits a unique strong solution $X^R$ with $X^R_0 \sim \mu_0$, and $(X^{n,R})_{n \in \N}$ converges in finite dimensional distributions to $X^R$.
    Observe that, for $R' \ge R$, the corresponding diffusion coefficients are compatible, in the sense that
    \begin{equation}
        \sigma^{R}_t(x) = \sigma^{R'}_t(x) \quad \mbox{for every } x \in B_R \mbox{ and almost every } t\in [0,1].
    \end{equation}
    Defining $\sigma_t(x) \coloneqq \sum_{R = 0}^\infty \mathds 1_{[R,R+1)}(|x|) \sigma_t^R(x)$, the properties of $\sigma^R$ given by \Cref{prop:fdd_convergence} imply that $\sigma^2$ is locally Lipschitz continuous, uniformly in $t \in [0,1]$, and that, for each $x \in \R^d$, there exist constants $c, C > 0$ such that $c\, \id \leq \sigma_t(x)^2 \leq C \, \id$, for $t \in [0, 1]$. In particular, $\sigma$ is locally Lipschitz continuous, uniformly in $t \in [0, 1]$, by \Cref{rem:square-root}. Skorohod \cite[Chapter 3, Section 3]{Sk65} shows that weak existence and pathwise uniqueness hold for the SDE $\D X_t = \sigma_t(X_t) \D B_t$ with $X_0 \sim \mu_0$. Therefore there exists a unique strong solution $X$, by Yamada and Watanabe \cite[Corollary 1]{YW}.
    Defining $\tau^R \coloneqq \inf \{ t > 0 \colon |X^R_t| \ge R \}$ and $\tau \coloneqq \inf \{ t > 0 \colon |X_t| \geq R\}$, pathwise uniqueness implies that, almost surely, $\tau^R = \tau$ and
    \begin{equation}
        \label{eq:thm_ffd_convergence_XR_X}
        X^R_{t \wedge \tau^R} = X_{t \wedge \tau^R}, \quad \text{for all } t \in [0,1].
    \end{equation}
    Thus, by the continuity of the paths of $X^R$, the convergence of $(X^{n,R})_{n \in \N}$ in finite dimensional distributions to $X^R$, and Doob's martingale inequality,
    \begin{align}
        \mathbb P \!\left( \sup_{t \in [0,1]} |X_t| > R \right)\!
        &= 
        \mathbb P\!\left( \sup_{t \in [0,1]} |X_t^R| > R \right)\!
        \leq \sup_{I \subseteq \mathbb Q \cap [0,1]}
        \mathbb P\!\left( \sup_{t \in I} |X_t^R| > R \right)\!
        \\
        &\le
        \sup_{I \subseteq \mathbb Q \cap [0,1]} \sup_{n \in \N} \mathbb P \!\left( \sup_{t \in I} |X_t^{n,R}| > R \right)\!
        \le \frac{C}{R^2}.
    \end{align}
    Therefore, similarly to \eqref{eq:thm_fdd_convergence_weak1}, we find that
    \begin{equation}
        \label{eq:thm_fdd_convergence_weak2}
        \mathbb E \!\left[ \sup_{t \in [0,1]} |X^R_t - X_t| \wedge 1 \right]\! \le \mathbb P\!\left( \sup_{t \in [0,1]} |X_t| \ge R \right)\! \le \frac{C}{(R - 1)^2}.    
    \end{equation}
    For any finite subset $I \subseteq \mathbb Q \cap [0,1]$, \eqref{eq:thm_fdd_convergence_weak1} and \eqref{eq:thm_fdd_convergence_weak2} imply that
    \begin{align}
        & \lim_{n \to \infty} \mathbb E\!\left[ \sup_{t \in I} |X_t^n - X_t| \wedge 1 \right]\! 
        \\
        & \qquad \qquad \le 
        \liminf_{n \to \infty} \mathbb E\!\left[ \sup_{t \in [0,1]} |X_t^n - X_t^{n,R}| \wedge 1 + \sup_{t \in I} |X_t^{n,R} - X_t^{R}| + \sup_{t \in [0,1]} |X_t^{R} - X_t| \wedge 1 \right]\!
        \\
        & \qquad \qquad \le
        \frac{2C}{(R-1)^2} + \liminf_{n \to \infty} \mathbb E \!\left[ \sup_{t \in I} |X_t^{n,R} - X_t^{R}| \wedge 1\right]\!
        =\frac{2C}{(R-1)^2},
    \end{align}
    using convergence of $(X^{n,R})_{n \in \N}$ in finite dimensional distributions to $X^R$. Taking $R \to \infty$, we conclude that $(X^n)_{n \in \N}$ converges to $X$ in finite dimensional distributions.
\end{proof}

\appendix

\section{The Markov and strong Markov property}\label{app:A}

For the definitions and properties of Markov and strong Markov processes, we refer to \cite{BlGe68,KaSh12}.

\begin{lemma}
    \label{lem:strong_Markov}
    Let $(X_t)_{t \in [0, 1]}$ be a c\`adl\`ag process on a filtered probability space $(\Omega, \F, \P, (\F_t)_{t\in [0,1]})$ under the usual conditions.
    Suppose that $X$ satisfies the Markov property for all times $t \in [0, 1]$ and satisfies the strong Markov property for all $\varepsilon > 0$ and all finite stopping times $\tau \ge \varepsilon > 0$. Then $X$ is a strong Markov process.
\end{lemma}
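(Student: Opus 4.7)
The plan is to reduce the strong Markov property at an arbitrary finite stopping time $\tau$ to the hypothesised strong Markov property at stopping times bounded below by some $\varepsilon > 0$, by treating $\{\tau > 0\}$ and $\{\tau = 0\}$ separately and appealing to the Markov property at time $0$ on the latter event. Concretely, I will fix a bounded Borel function $f$, $h \ge 0$ and $A \in \F_\tau$, and aim to verify
\[
	\E[f(X_{\tau+h})\mathds{1}_A] = \E\!\left[\E[f(X_{\tau+h}) \mid X_\tau]\,\mathds{1}_A\right]\!;
\]
a routine monotone class extension will then promote this pointwise identity to the functional form of the strong Markov property.

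For the piece on $\{\tau > 0\}$, I would introduce the auxiliary stopping time $\tau_\varepsilon \coloneqq \tau \vee \varepsilon$, which satisfies $\tau_\varepsilon \ge \varepsilon$ and hence falls under the hypothesis. On the event $\{\tau \ge \varepsilon\} \in \F_\tau$ one has $\tau_\varepsilon = \tau$ and $X_{\tau_\varepsilon} = X_\tau$; combined with the standard fact that, for stopping times $\sigma \le \sigma'$ and $B \in \F_{\sigma'}$, $B \cap \{\sigma = \sigma'\} \in \F_\sigma$, this lets me transfer strong Markov at $\tau_\varepsilon$ into the same statement at $\tau$ on $\{\tau \ge \varepsilon\}$. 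Sending $\varepsilon = 1/n \downarrow 0$ and invoking dominated convergence (with dominator $\|f\|_\infty$) would yield the identity on all of $A \cap \{\tau > 0\}$.

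For the piece on $\{\tau = 0\}$, the key observation is that $\{\tau = 0\} \in \F_0$, so $A \cap \{\tau = 0\} \in \F_0$ for every $A \in \F_\tau$. On this event $X_\tau = X_0$ and $X_{\tau+h} = X_h$, and the hypothesised Markov property at time $t = 0$, applied against the $\F_0$-measurable indicator $\mathds{1}_{A \cap \{\tau = 0\}}$, gives the analogous identity directly. Summing the two contributions closes the argument.

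The main subtlety, more a bookkeeping issue than a genuine obstacle, is justifying that strong Markov at $\tau_\varepsilon$ really descends to strong Markov at $\tau$ on $\{\tau = \tau_\varepsilon\}$, i.e., the careful interplay between $\F_\tau$ and $\F_{\tau_\varepsilon}$; once that is in place, everything else reduces to routine measure-theoretic manipulations, and the usual conditions on the filtration enter only through these standard stopping time facts.
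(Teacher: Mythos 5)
Your proposal is correct and follows essentially the same route as the paper: the paper's auxiliary stopping time $\tau_\varepsilon = \varepsilon\mathbbm{1}_{\{\tau\le\varepsilon\}} + \tau\mathbbm{1}_{\{\tau>\varepsilon\}}$ is precisely your $\tau\vee\varepsilon$, the hypothesis is applied on $\{\tau>\varepsilon\}$ where $\tau_\varepsilon=\tau$, one lets $\varepsilon\searrow 0$ to cover $\{\tau>0\}$, and the Markov property at time $0$ handles $\{\tau=0\}$. The only cosmetic difference is that you phrase the identity in integrated form against $A\in\F_\tau$ rather than directly as an equality of conditional expectations.
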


\begin{proof}
    Let $g \colon \R^d \to \R$ be measurable and bounded, $t \in \R_+$, and fix $\varepsilon > 0$.
    For a given finite stopping time $\tau$, 
    we consider the event $A_\varepsilon \coloneqq \{ \tau \le \varepsilon \} \in \F_\varepsilon$, and define the finite stopping time
    \begin{equation}
        \tau_\varepsilon \coloneqq \varepsilon \mathbbm 1_{A_\varepsilon} + \tau \mathbbm 1_{A_\varepsilon^\textrm{c}} \geq \varepsilon.
    \end{equation}
    In the following, all equalities should be understood $\P$-almost surely. On the event $A_\varepsilon^\textrm{c}$, we have by assumption that
    \begin{equation}
        \E[g(X_{\tau + t}) \mid \F_{\tau}] = \E[ g(X_{\tau_\varepsilon + t}) \mid \F_{\tau_\varepsilon}] = \E[g(X_{\tau_\varepsilon + t}) \mid \sigma(X_{\tau_\varepsilon})] = \E[g(X_{\tau + t}) \mid \sigma(X_{\tau})].
    \end{equation}
    In particular, since $A^\textrm{c}_\varepsilon \nearrow \{ \tau > 0 \}$ as $\varepsilon \searrow 0$, we find that
    	$\E[ g(X_{\tau + t}) \mid \F_{\tau}] = \E[ g(X_{\tau + t}) \mid \sigma(X_{\tau})]$ on $\{ \tau > 0 \}$.
    On the other hand, we have by the Markov property of $X$ that $\E[g(X_t) \mid \F_0] = \E[g(X_t) \mid \sigma(X_0)]$.
    Combining these two observations yields that, for all finite stopping times $\tau$,
    \begin{align}
        \E[g(X_\tau) \mid \F_\tau] & = \mathbbm 1_{\{ \tau = 0 \}} \E[g(X_t) \mid \F_0] + \mathbbm 1_{\{ \tau > 0 \}} \E[g(X_{\tau + t}) \mid \F_\tau]\\
        & =  \mathbbm 1_{\{ \tau = 0 \}} \E[g(X_t) \mid \sigma(X_0)] + \mathbbm 1_{\{ \tau > 0 \}} \E[g(X_{\tau + t}) \mid \sigma(X_\tau)]\\
        & = \E[ g(X_{\tau + t}) \mid \sigma(X_{\tau})].      
    \end{align}
    Hence $X$ is a strong Markov process.
\end{proof}

\begin{lemma}\label{lem:markov-diffusion}
	Let $B$ be a standard Brownian motion on $\R^d$ and $\tilde \sigma$ an $\F^B$-predictable process taking values in the set of positive semidefinite matrices. Let $X$ be a square-integrable It\^o process satisfying $\D X_t = \tilde \sigma_t \D B_t$ for $t \in [0, 1]$. If $X$ is a Markov process, then there exists a measurable function $\sigma \colon [0, 1] \times \R^d \to \R^{d \times d}$ taking values in the set of positive semidefinite matrices such that
	\begin{equation}\label{eq:markov-ito}
		\D X_t = \sigma_t(X_t) \D B_t, \quad t \in [0, 1].
	\end{equation}
\end{lemma}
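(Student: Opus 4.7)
The plan is to show that the diffusion matrix $\alpha_t \coloneqq \tilde\sigma_t \tilde\sigma_t^\top$ is, for almost every $t$, measurable with respect to $\sigma(X_t)$. Once this is established, uniqueness of the positive semidefinite square root on the set of positive semidefinite matrices (see \Cref{rem:square-root}) will force $\tilde\sigma_t = \sigma_t(X_t)$ almost surely, where $\sigma_t(x)$ is defined as the positive semidefinite square root of $\sigma_t(x)^2 \coloneqq \E[\alpha_t \mid X_t = x]$, and this will give \eqref{eq:markov-ito}.

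The first step is to obtain an $\F^X$-predictable version of $\alpha$. Because the quadratic variation $\langle X \rangle_t = \int_0^t \alpha_s \, \D s$ can be computed pathwise from $X$ as the limit of quadratic sums over refining partitions, the matrix-valued process $\langle X \rangle$ is $\F^X$-adapted. Applying the Lebesgue differentiation theorem entry-wise to the absolutely continuous matrix-valued measure $\D \langle X \rangle$, the left-hand derivatives
\begin{equation}
	\hat \alpha_t \coloneqq \liminf_{h \downarrow 0} \frac{\langle X \rangle_t - \langle X \rangle_{t - h}}{h}
\end{equation}
coincide with $\alpha_t$ for a.e.\ $t$ almost surely, and form an $\F^X_{t-}$-measurable, hence $\F^X$-predictable, version. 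Since $X$ has continuous paths, $\F^X_{t-} = \F^X_t$, so this version is also $\F^X_t$-measurable for every $t$.

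The key step is then to invoke the Markov property of $X$ to promote $\F^X_t$-measurability to $\sigma(X_t)$-measurability. For fixed $h > 0$ the increment $h^{-1}(\langle X \rangle_{t + h} - \langle X \rangle_t)$ is a measurable functional of the path $(X_s)_{s \in [t, t + h]}$. The Markov property of $X$, combined with a standard monotone-class extension of $\E[f(X_r) \mid \F^X_t] = \E[f(X_r) \mid X_t]$ to general bounded functionals of the future, yields
\begin{equation}
	\E\!\left[\frac{\langle X \rangle_{t + h} - \langle X \rangle_t}{h} \, \Big| \, \F^X_t\right]\! = \E\!\left[\frac{\langle X \rangle_{t + h} - \langle X \rangle_t}{h} \, \Big| \, X_t\right]\!.
\end{equation}
Sending $h \downarrow 0$, the Lebesgue differentiation theorem identifies the pointwise limit as $\alpha_t$ for a.e.\ $t$; uniform integrability of the averages, which should follow from $\E\!\left[\int_0^1 \lVert \alpha_r \rVert \, \D r\right]\! < \infty$ and hence from the square-integrability of $X$, will allow passing to the limit inside the conditional expectations, yielding $\E[\alpha_t \mid \F^X_t] = \E[\alpha_t \mid X_t]$. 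Combined with the $\F^X_t$-measurability obtained in the previous step, this forces $\alpha_t = \E[\alpha_t \mid X_t]$, so $\alpha_t$ is $\sigma(X_t)$-measurable for a.e.\ $t$.

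The remaining step is essentially mechanical: use a regular version of $x \mapsto \E[\alpha_t \mid X_t = x]$, made jointly measurable in $(t, x)$ by a standard disintegration argument, to obtain $\sigma_t(x)^2$, and take its pointwise positive semidefinite square root to produce $\sigma_t(x)$. Uniqueness of the square root on the positive semidefinite matrices then gives $\tilde\sigma_t = \sigma_t(X_t)$ a.s., and hence \eqref{eq:markov-ito}. I expect the main obstacle to be the limit exchange in the Markov-property step: one needs to control the Hardy--Littlewood-type averages $h^{-1}(\langle X \rangle_{t+h} - \langle X \rangle_t)$ sufficiently uniformly to swap limit and conditional expectation, and the square-integrability assumption on $X$ is precisely what should supply the required $L^1$-control.
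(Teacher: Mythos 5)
Your proposal is correct and follows essentially the same route as the paper: both arguments differentiate the quadratic variation $\langle X\rangle_t = \int_0^t \tilde\sigma_r^2\,\D r$, use the Markov property to replace conditioning on $\F^X_t$ by conditioning on $\sigma(X_t)$, and conclude that $\tilde\sigma_t^2$ is a.s.\ a function of $(t, X_t)$ before taking the positive semidefinite square root. Your version merely fills in details the paper leaves implicit (the $\F^X$-measurable version of $\tilde\sigma_t^2$, the $L^1$-justification of the limit exchange, and joint measurability of the conditional expectation), so no further comment is needed.
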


\begin{proof}
	Fix $s \in [0, 1]$. Then, for any $t \in [s, 1]$,
	\begin{equation}
		\E [|X_t|^2 - |X_s|^2 \mid \F^X_s] = \E [|X_t|^2 - |X_s|^2 \mid \sigma(X_s)],
	\end{equation}
	almost surely, by the Markov property. Thus
	\begin{equation}
		\begin{split}
			\tilde \sigma_s^2 & = \lim_{t \searrow s}\frac{1}{t -s}\E\!\left[\int_s^t \tilde \sigma_r^2 \D r \mid \F^X_s\right]\! = \lim_{t \searrow s}\frac{1}{t -s}\E\!\left[\int_s^t \tilde \sigma_r^2 \D r \mid \sigma(X_s)\right]\!,
		\end{split}
	\end{equation}
	almost surely. Hence $\tilde \sigma_s$ is almost surely adapted to the sigma-algebra generated by $X_s$ and the representation \eqref{eq:markov-ito} holds.
\end{proof}

\section{Auxiliary results for the examples of Section \ref{sec:necessity}}\label{app:B}

\begin{lemma}\label{lem:app-rem-1}
	In the setting of \Cref{rem:ex-cadlag}, for each $t_0 \in (0, 1]$, the process $X^{t_0}$ mimics $\mu$ on $[t_0, 1]$. Moreover, as $t_0 \to 0$, there exists a limit $X$ of $X^{t_0}$ in distribution such that $(X_t)_{t \in [t_1,1]} \sim (X^{t_1}_t)_{t \in [t_1,1]}$, for any $t_1 \in (0, 1]$.
\end{lemma}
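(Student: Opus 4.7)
\smallskip

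\textbf{Proof proposal.} My plan is to verify first the mimicking property and then the consistency, after which the limiting process $X$ is obtained by a standard extension argument. For the mimicking property, I would use the fact that the jump times of $X^{t_0}$ on $[t_0,1]$ form an inhomogeneous Poisson process with intensity $\lambda_t = 1/(2t)$, so the number $N_t$ of jumps in $[t_0,t]$ is Poisson with parameter $\Lambda_t - \Lambda_{t_0} = \tfrac12 \log(t/t_0)$. The classical identity for the generating function of a Poisson variable yields
\begin{equation}
    \P[N_t \text{ even}] = \tfrac12\bigl(1 + e^{-2(\Lambda_t - \Lambda_{t_0})}\bigr) = \tfrac12\bigl(1 + t_0/t\bigr).
\end{equation}
Conditionally on $X_{t_0}^{t_0} \in S^1_{t_0}$ (resp.\ $S^2_{t_0}$), the particle is on $S^1_t$ (resp.\ $S^2_t$) iff $N_t$ is even, and in that case its position is $\sqrt{t}\, U_{N_t}$ (resp.\ $\sqrt{t}\, V_{N_t}$), which is uniform on the corresponding sphere and independent of $N_t$; an analogous statement applies when $N_t$ is odd. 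Since $\mu_{t_0}$ puts mass $1/2$ on each of $S^1_{t_0}$ and $S^2_{t_0}$, a direct summation shows that $\P[X_t^{t_0} \in S^1_t] = \P[X_t^{t_0} \in S^2_t] = 1/2$ and the conditional law on each sphere is uniform; this is precisely $\mu_t$.

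For the consistency statement, the construction makes $X^{t_0}$ a time-inhomogeneous strong Markov process with transition kernels that depend only on the starting time and state, not on $t_0$. More precisely, for $t_0 \le t_1 \le t \le 1$ the transition kernel of $X^{t_0}$ from time $t_1$ to time $t$, given $X^{t_0}_{t_1} = y$, is obtained by the same Poisson-counting construction starting from $y$ at time $t_1$, hence coincides with the transition kernel of $X^{t_1}$. Combined with the equality in law $\Law(X^{t_0}_{t_1}) = \mu_{t_1} = \Law(X^{t_1}_{t_1})$ from step one and the Markov property, this gives $(X^{t_0}_t)_{t \in [t_1,1]} \sim (X^{t_1}_t)_{t \in [t_1,1]}$.

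The family $\{\Law((X^{t_0}_t)_{t \in [t_0,1]}) : t_0 \in (0,1]\}$ is therefore consistent on the directed system of intervals $[t_0,1]$, $t_0 \downarrow 0$, so Kolmogorov's extension theorem (applied on the Skorokhod space, using that $X^{t_0}$ is c\`adl\`ag) produces a process $(X_t)_{t \in (0,1]}$ satisfying $(X_t)_{t \in [t_1,1]} \sim (X^{t_1}_t)_{t \in [t_1,1]}$ for every $t_1 \in (0,1]$. In particular $\Law(X_t) = \mu_t$ for all $t \in (0,1]$, and since $|X_t| = \sqrt{t}$ almost surely by the deterministic radial behaviour inherited from $X^{t_0}$, we may set $X_0 \coloneqq 0$ to obtain the c\`adl\`ag limit on $[0,1]$.

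The step I expect to be the main technical obstacle is the verification of the Markov (and strong Markov) property of $X^{t_0}$ in a form strong enough to ensure that the transition kernels from time $t_1$ onwards are genuinely independent of $t_0$; this requires careful bookkeeping of the independence of the families $(\xi_n), (U_n), (V_n)$ together with the thinning/restart property of the inhomogeneous Poisson process, but it is standard once set up. Everything else reduces to the two explicit Poisson computations above and a Kolmogorov extension.
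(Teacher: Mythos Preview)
Your proposal is correct and follows essentially the same route as the paper: compute the parity of the Poisson jump count on $[t_0,t]$ to see that each plane gets mass $1/2$ (the paper does this via the Gamma-sum/$\cosh$--$\sinh$ identity, you via the generating-function identity---these are the same computation), then invoke the memorylessness/Markov property for consistency and extend. Your treatment of the endpoint $X_0=0$ via $|X_t|=\sqrt{t}$ is a nice extra detail that the paper leaves implicit.
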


\begin{proof}
	For $t_0 \in (0, 1)$ and $n \in \N$, denote $A_{n,t_0} \coloneqq \{ \sum_{k = 1}^n \xi_k < \Lambda_t - \Lambda_{t_0} \le \sum_{k = 1}^{n + 1} \xi_k \}$. Note that the sum of $n$ independent exponential distributions with rate parameter $1$ is distributed according to a Gamma distribution with shape paramter $n$ and rate parameter $1$.
        Therefore we have
        \begin{equation}
            \label{eq:rem.probs}
            \mathbb P[A_{n,t_0}] = \int_0^{\Lambda_{t} - \Lambda_{t_0}} \frac{x^{n-1}}{(n-1)!} e^{-x + x + \Lambda_{t} - \Lambda_{t_0}} \, \D x
            =
            \frac{(\Lambda_t - \Lambda_{t_0})^n}{n!} e^{-(\Lambda_t - \Lambda_{t_0})}.
        \end{equation}
	For $i = 1, 2$, we compute
    \begin{align}
        \mathbb P[ X_t^{t_0} \in S_t^i \mid X^{t_0}_{t_0} \in S_{t_0}^2] 
        &= \sum_{n \in \N \cup \{ 0 \} }\mathbbm 1_{2\mathbb Z + i }(n) \P \!\left[ A_{n, t_0}\right]\!
        \\
        &= \sum_{n \in \N \cup \{0 \} }\mathbbm 1_{2 \mathbb Z + i}(n) \frac{(\Lambda_t - \Lambda_{t_0})^n}{n!} e^{-(\Lambda_t - \Lambda_{t_0})}
        \\
        &=
        \begin{cases}
            \frac{\sinh(\Lambda_t - \Lambda_{t_0})}{e^{(\Lambda_t - \Lambda_{t_0})}} & i = 1,
            \\
            \frac{\cosh(\Lambda_t - \Lambda_{t_0})}{e^{(\Lambda_t - \Lambda_{t_0})}} & i = 2.
        \end{cases}
    \end{align}
    Therefore, by rotational symmetry, we have that $X^{t_0}_{t_0} \sim \mu_{t_0}$ implies that $X_t^{t_0} \sim \mu_t$ for all $t \in [t_0,1]$.
    Now note that by the memoryless property of the exponential distribution we have, for $0 < t_0 < t_1 \le 1$,
    \begin{equation}
        (X_t^{t_0})_{t \in [t_1,1]} \sim (X^{t_1}_t)_{t \in [t_1,1]}.
    \end{equation}
    Hence, as $t_0 \searrow 0$, there exists a limiting process $X = (X_t)_{t \in [0,1]}$ with c\`adl\`ag paths that satisfies the desired properties.
\end{proof}

\begin{lemma}\label{lem:app-rem-2}
	Let $X$ be as in \Cref{rem:ex-cadlag} and let $f \colon \R^d \to [0,1]$ be measurable. Then, for $t \in [0,1]$,
    \begin{equation}\label{eq:rem-claim}
        \E[f(X_{t}) \mid \F^X_0] = \lim_{t_0 \searrow 0} \E[f(X_{t}) \mid \mathcal F^X_{t_0} ] = \E[f(X_{t})]\quad \mbox{a.s.}
    \end{equation}
\end{lemma}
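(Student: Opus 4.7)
For the first equality, the plan is to invoke L\'evy's downward theorem: since the usual conditions give $\F^X_0 = \bigcap_{s > 0} \F^X_s$, the backwards martingale $(\E[f(X_t) \mid \F^X_{t_0}])_{0 < t_0 \leq t}$ converges almost surely and in $L^1$ to $\E[f(X_t) \mid \F^X_0]$ as $t_0 \searrow 0$. This identifies the limit in \eqref{eq:rem-claim} with the outer conditional expectation without any use of the explicit construction of $X$.

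For the second equality, I would first use the consistency relation \eqref{eq:rem.comp_Poisson.consistent} together with the strong Markov property of $X^{t_0}$ (established in Remark \ref{rem:ex-cadlag}) to write
\begin{equation}
\E[f(X_t) \mid \F^X_{t_0}] = \E[f(X_t) \mid X_{t_0}] = h_{t_0}(X_{t_0}) \quad \text{almost surely},
\end{equation}
for some measurable $h_{t_0} \colon S^1_{t_0} \cup S^2_{t_0} \to [0,1]$. It then suffices to show that $h_{t_0}(x) \to \E[f(X_t)]$ uniformly in $x \in S^1_{t_0} \cup S^2_{t_0}$ as $t_0 \searrow 0$, since then the convergence passes unchanged to the random evaluation $h_{t_0}(X_{t_0})$.

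To identify this limit, I would decompose the conditional law of $X^{t_0}_t$ given $X^{t_0}_{t_0} = x \in S^j_{t_0}$ according to the number $n$ of jumps in $[t_0, t]$, using the explicit formula in Remark \ref{rem:ex-cadlag}. The $n = 0$ term is a Dirac mass at the deterministic point $x\sqrt{t/t_0}$ with weight $e^{-(\Lambda_t - \Lambda_{t_0})} = \sqrt{t_0/t}$, so its contribution to $h_{t_0}(x)$ is bounded in modulus by $\sqrt{t_0/t}$, which vanishes uniformly in $x$. The terms $n \ge 1$ are uniform distributions on $S^1_t$ or $S^2_t$, determined by the parity of $n$ and the branch $j$, and the computation in the proof of Lemma \ref{lem:app-rem-1} already shows that, for each $j$, the total mass landing on $S^i_t$ converges to $1/2$ for $i \in \{1, 2\}$ as $t_0 \searrow 0$. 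Consequently $h_{t_0}(x) \to \tfrac{1}{2}\int f\, d\nu^1 + \tfrac{1}{2}\int f\, d\nu^2 = \E[f(X_t)]$ uniformly in $x$, where $\nu^i$ denotes the uniform probability on $S^i_t$.

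The only real subtlety is the mixing step just described: one needs the branch label carried by $X_{t_0}$ to become asymptotically uninformative about $X_t$. This is delivered precisely by the divergence $\Lambda_t - \Lambda_{t_0} = \tfrac{1}{2}\log(t/t_0) \to \infty$ as $t_0 \searrow 0$, which balances the Poisson jump count over even and odd $n$ and thereby washes out all dependence of $h_{t_0}(x)$ on $x$ in the limit.
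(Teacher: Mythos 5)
Your proposal is correct and follows essentially the same route as the paper: the first equality via the backward martingale convergence theorem together with right-continuity of the filtration, and the second via the Markov property at $t_0 > 0$ followed by the explicit computation of $\E[f(X_t) \mid X_{t_0} = x]$ using the Poisson jump-count decomposition and the convergence of $\cosh(\Lambda_t - \Lambda_{t_0})/e^{\Lambda_t - \Lambda_{t_0}}$ and $\sinh(\Lambda_t - \Lambda_{t_0})/e^{\Lambda_t - \Lambda_{t_0}}$ to $1/2$. If anything, your explicit isolation of the $n = 0$ term as a Dirac mass of weight $\sqrt{t_0/t}$ is slightly more careful than the paper's computation, which treats that term like the others.
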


\begin{proof}
	Note that the first equality in \eqref{eq:rem-claim} is due the martingale convergence theorem.
    By the Markov property at $t_0$, we have that, for $0 < t_0 < t$,
    \begin{equation}
        \E[f(X_{t}) \mid \mathcal F^X_{t_0} ] = \E[f(X_{t})\mid X_{t_0} ].
    \end{equation}
    By construction, given the starting point $X_{t_0} = x \in S_{t_0}^2$, we have $X_{t} \sim X_{t}^{t_0}$.
    Now use the independence of $(U_n)_{n \in \N}$ and $(\xi_k)_{k \in \N}$ to compute that
    \begin{align}
        \mathbb E[f(X_t) \mid X_{t_0} = x] &= \sum_{n \in \N \cup \{ 0 \}} \mathbb E[ \mathbbm 1_{A_{n,t_0}} ( \mathbbm 1_{2\mathbb Z}(n) f(V_n) + \mathbbm 1_{2\mathbb Z + 1}(n) f(U_n) ]
        \\
        &=\sum_{n \in \N \cup \{ 0 \} } \P[ A_{n,t_0} ] \!\left( \mathbbm 1_{2\mathbb Z}(n)\mathbb E[f(V)] + \mathbbm 1_{2\mathbb Z + 1}(n)\mathbb E[f(U)]\right)\!,
    \end{align}
    where $U \sim \text{Unif}(S^1_1)$ and $V \sim \text{Unif}(S^2_1)$.
    Since the law of $X_t$ is given explicitly by $\mu_t = \frac12 (L((0,0,\sqrt{t}U)) + L(\sqrt{t}U,0,0))$, it remains to prove that
    \begin{equation}
        \sum_{n \in \N \cup \{ 0 \}} \mathbb P[A_{2n,t_0}] \to \frac12\quad\mbox{as } t_0\searrow 0.
    \end{equation}
    Using \eqref{eq:rem.probs} and noting that $\Lambda_t - \Lambda_{t_0}$ diverges to $+\infty$ for $t_0 \searrow 0$, we find that
    \begin{equation}
        \sum_{n \in \N \cup \{ 0 \}} \mathbb P[A_{2n,t_0}] = \frac{\cosh(\Lambda_t - \Lambda_{t_0})}{e^{\Lambda_t - \Lambda_{t_0}}} \to \frac12,
    \end{equation}
    and conclude that \eqref{eq:rem-claim} holds.
\end{proof}

\begin{lemma}\label{lem:app-reg}
	Let $n \in \N$ and set $t_0 = 2^{-(n+1)}$, $t_1 = 2^{-n}$. Then, in the setting of \Cref{prop:ex-regularized}, with $\Sccc^i_{t} \coloneqq \{\exists x \in S^i_{a_i(t)} \colon \; |x - Y_t| < t \}$ for $t \in [0, 1]$, $i = 1,2$, there exists a constant $C > 0$ such that
	   \begin{equation}
    	    \P[\Sccc^i_{t_0} \bigtriangleup \Sccc^i_{t_1}] \le C t_0.
	   \end{equation}
\end{lemma}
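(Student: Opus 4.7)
My approach is to establish the much stronger bound $\P[\Sccc^i_{t_0} \bigtriangleup \Sccc^i_{t_1}] \leq C t_0^{12}$. Without loss of generality, assume $[t_0, t_1] \subset I_1$; the case $[t_0, t_1] \subset I_2$ is handled symmetrically by swapping the roles of $\X^1$ and $\X^2$. Under this assumption $a_2$ is constant on $[t_0, t_1]$ and $a_1(t_1) - a_1(t_0) = t_1 - t_0 = t_0$. The crucial quantitative input is that, by the martingale property of $Y$ combined with the marginal law $\Law(Y_t) = \nu_t \ast \gamma^{t^{14}}$,
\begin{equation*}
\E[|Y^{(2)}_{t_1} - Y^{(2)}_{t_0}|^2] = \E[|Y^{(2)}_{t_1}|^2] - \E[|Y^{(2)}_{t_0}|^2] = \tfrac12\bigl(a_2(t_1) - a_2(t_0)\bigr) + O(t_0^{14}) = O(t_0^{14}),
\end{equation*}
so the ``frozen'' component $Y^{(2)}$ has an extremely small $L^2$-increment on the interval. (Here I use the superscripts $Y^{(1)}, Y^{(2)}$ for the projections of $Y$ onto $\X^1$ and $\X^2$.)

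I would then observe the elementary set inclusion
\begin{equation*}
\Sccc^i_{t_0} \setminus \Sccc^i_{t_1} \subseteq \bigl(\Sccc^i_{t_0} \cap \Sccc^{3-i}_{t_1}\bigr) \cup \bigl(\Sccc^1_{t_1} \cup \Sccc^2_{t_1}\bigr)^{\!c},
\end{equation*}
together with its analogue obtained by swapping $t_0$ and $t_1$. This reduces the task to bounding (i) the ``switching'' probabilities $\P[\Sccc^i_{t_0} \cap \Sccc^{3-i}_{t_1}]$ and $\P[\Sccc^{3-i}_{t_0} \cap \Sccc^i_{t_1}]$, and (ii) the ``off-arm'' probabilities $\P[(\Sccc^1_t \cup \Sccc^2_t)^c]$ for $t \in \{t_0, t_1\}$. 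The off-arm probabilities are controlled directly from the marginal law $\nu_t \ast \gamma^{t^{14}}$ via a Gaussian-tail computation entirely analogous to the one leading to \eqref{eq:s1-lb}, yielding $\P[(\Sccc^1_t \cup \Sccc^2_t)^c] \leq C t^{12}$.

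For the switching probabilities, the key geometric observation is that the two arms $S^1_{a_1(t)}$ and $S^2_{a_2(t)}$ live in orthogonal subspaces $\X^1$ and $\X^2$. Consequently, being in $\Sccc^i_{t_0}$ forces $|Y^{(3-i)}_{t_0}| < t_0$, while being in $\Sccc^{3-i}_{t_1}$ forces $\bigl||Y^{(3-i)}_{t_1}| - \sqrt{a_{3-i}(t_1)}\bigr| < t_1$. Since $a_2$ is constant on $[t_0, t_1]$ and $\sqrt{a_2(t_0)} \geq c \sqrt{t_0}$ for small $t_0$ and some $c > 0$, any switch in the case $i = 1$ forces $|Y^{(2)}_{t_1} - Y^{(2)}_{t_0}| \geq \sqrt{a_2(t_1)} - t_0 - t_1 \geq c' \sqrt{t_0}$; an analogous inequality holds for the reverse switch and for $i = 2$ (in each case, the switch requires $|Y^{(2)}|$ to jump between a neighbourhood of $0$ and a neighbourhood of $\sqrt{a_2(t_0)}$). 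Chebyshev's inequality and the frozen-component estimate then give
\begin{equation*}
\P[\Sccc^i_{t_0} \cap \Sccc^{3-i}_{t_1}] \leq \frac{\E[|Y^{(2)}_{t_1} - Y^{(2)}_{t_0}|^2]}{(c')^2 t_0} = O(t_0^{13}),
\end{equation*}
and the same bound for the reverse switching probability.

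Combining the switching bounds $O(t_0^{13})$ with the off-arm bounds $O(t^{12})$ yields $\P[\Sccc^i_{t_0} \bigtriangleup \Sccc^i_{t_1}] = O(t_0^{12}) \leq C t_0$, as required. The main conceptual obstacle is recognising that, although the martingale $Y$ has quadratic variation of order $t_0$ on $[t_0, t_1]$ (so that $Y^{(1)}$ typically moves on the large scale $\sqrt{t_0}$), every arm-switching event is simultaneously witnessed by a large displacement in the orthogonal, frozen component $Y^{(2)}$; it is the tameness of $Y^{(2)}$ (and not $Y^{(1)}$) that makes a simple Chebyshev bound sharp enough.
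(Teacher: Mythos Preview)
Your proof is correct and considerably simpler than the paper's. Both arguments rest on the same pivotal observation: on the interval $[t_0,t_1]\subset I_1$ the ``frozen'' component $Y^{(2)}$ satisfies
\[
\E\bigl[|Y^{(2)}_{t_1}-Y^{(2)}_{t_0}|^2\bigr]=\E\bigl[|Y^{(2)}_{t_1}|^2\bigr]-\E\bigl[|Y^{(2)}_{t_0}|^2\bigr]=\tfrac12\bigl(a_2(t_1)-a_2(t_0)\bigr)+2(t_1^{14}-t_0^{14})=O(t_0^{14}),
\]
because $Y^{(2)}$ is itself a martingale with known marginal second moments. You exploit this identity directly and combine it with a clean geometric reduction (any switch between arms forces $|Y^{(2)}_{t_1}-Y^{(2)}_{t_0}|\gtrsim\sqrt{t_0}$) to obtain $\P[\Sccc^i_{t_0}\bigtriangleup\Sccc^i_{t_1}]=O(t_0^{12})$ via Chebyshev.

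The paper instead introduces an auxiliary finer scale $\Scc^i_t$ (threshold $t^2$ rather than $t$), bounds $\E[|Y^2_{t_1}-Y^2_{t_0}|^2]$ only by $O(t_0^6)$ because it throws away mass by restricting the lower bound to $\Scc^2_{t_0}$, and then needs an additional estimate on the \emph{active} component $Y^{(1)}$ conditional on $(\Scc^1_{t_0})^{\mathrm c}$, invoking Doob's maximal inequality twice. This yields only the advertised $O(t_0)$. Your route avoids the finer-scale events, avoids any control of $Y^{(1)}$, and sharpens the exponent by an order of magnitude --- more than sufficient for the summability argument in \Cref{prop:ex-regularized}. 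The only minor point to make explicit is that the inequality $\sqrt{a_2(t_0)}-3t_0\geq c'\sqrt{t_0}$ requires $t_0$ below some fixed threshold; for the finitely many larger $t_0$ the bound $Ct_0$ holds trivially upon enlarging $C$.
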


\begin{proof}
	Fix $t_0 = 2^{-(n+1)}$, $t_1 = 2^{-n}$ for some even integer $n \in \N \cup \{ 0 \}$. Then we have that $[2^{-(n+1)}, 2^{-n}] \in I_1$, and so $a_2(t)$ and $\mu_t |_{\X_2}$ are constant on this interval.    
    Subsequently, we will repeatedly employ the estimates
    \begin{equation}\label{eq:ex_regularized.estimates}
    \begin{split}
        & \E[|X_t|^4 + |N_{t^{14}}|^4] \le \frac{a_1(t)^2 + a_2(t)^2}{2} + 3 t^{28} \le C t^2,\\
        & \P[|N_{t^{14}}| \ge a] \le \frac{t^{14}}{a^2}, \quad a > 0.
	\end{split}
    \end{equation}

	Similarly as in \Cref{prop:ex-discontinuous}, we split the vector $Y$ into two parts where $Y^1_t$ denotes the first two coordinates and $Y^2_t$ the last two components of $Y_t$, for $t \in [0, 1]$. We aim to bound the probability of $Y^i$ leaving a small ball around $Y^i_{t_0}$ in the time interval $[2^{-(n +1)}, 2^{-n}]$.
	Also define the events $\Scc^i_{t} \coloneqq \{\exists x \in S^i_{a_i(t)} \colon \; |x - Y_t| < t^2\}$ for $t \in [0, 1]$, $i = 1,2$.
	
	First we compute the upper bound
    \begin{align}\label{eq:ex_regularized.ub}
        \E[|Y^i_t|^2] \le \E[|X_t^i|^2 + |N_{t^{14}}|^2] \le \frac12 a_i(t) + t^{14}.
    \end{align}
    In the case that $i = 2$ we get the lower bound
    \begin{align}
        \E[|Y_{t_0}^2|^2 \mathbbm 1_{\Scc_{t_0}^2}] & \ge  \E[|X_{t_0} + N_{t_0^{14}}|^2 \mathbbm 1_{\Scc_{t_0}^2}; |N_{t_0^{14}}| < t_0^2]\\
        &\ge
        \E[|X_{t_0} + N_{t_0^{14}} |^2\mathbbm 1_{\Scc_{t_0}^2}] - \E[|X_{t_0} + N_{t_0^{14}}|^2; |N_{t_0^{14}}| \ge t_0^2]
        \\
        &\ge \E[|X_{t_0}|^2\mathbbm 1_{\Scc_{t_0}^2}] - \E[|X_{t_0}|^2 + |N_{t_0^{14}}|^2; |N_{t_0^{14}}| \ge t_0^2],
    \end{align}
    using the independence of $X_{t_0}$ and $N_{t_0^{14}}$. We use this independence again, together with the Cauchy-Schwarz inequality and the estimates \eqref{eq:ex_regularized.estimates}, to show that
    
    \begin{align}
    	 \E[|Y_{t_0}^2|^2 \mathbbm 1_{\Scc_{t_0}^2}] &\ge \frac12 a_2(t_0) - \E[|X_{t_0}|^2 + |N_{t_0^{14}}|^2; |N_{t_0^{14}}| \ge t_0^2 ] \\
        &\ge \frac12 a_2(t_0) - \!\left(\P[|N_{t_0^{14}}| \ge t_0^2] \E[|X_{t_0}|^4 + |N_{t_0^{14}}|^4]\right)\!^\frac12
        \\
        &\ge \frac12 a_2(t_0) - C t_0^6.
    \end{align}
    
    Recalling that $a_2$ is constant on $[t_0,t_1]$, \eqref{eq:ex_regularized.ub} implies that $\E[|Y_{t_1}|^2] \leq \frac12 a_2(t_0) + t_1^{14}$.
    Since $Y$ is a martingale we have
    \begin{equation}
    	 \E[|Y_{t_1}^2 - Y^2_{t_0}|^2] = \E[|Y_{t_1}^2|^2 - |Y_{t_0}^2|^2] \le \E[|Y_{t_1}^2|^2] - \E[|Y_{t_0}^2|^2\mathbbm{1}_{\Scc_{t_0}^2}].
    \end{equation}
    Combining the upper and lower bounds above gives us
    \begin{equation}
        \label{eq:ex_regularized.claim1}
        \E[|Y_{t_1}^2 - Y^2_{t_0}|^2] \le \frac12 a_2(t_0) + t_1^{14} - \!\left(\frac12 a_2(t_0) - Ct_0^6\right)\! \le C_0 t_0^6.
    \end{equation}
    for some $C_0 > 0$.
    By Doob's maximal inequality we obtain
    \begin{equation}
        \label{eq:ex_regularized.2_Doob1}
        \P[\sup_{t \in [t_0,t_1]} |Y^2_t - Y_{t_0}^2| \ge t_0 / 2] \le 4 C_0 t^4_0.
    \end{equation}

    Next, we prove a similar bound to \eqref{eq:ex_regularized.claim1} when $i = 1$. We claim that, for some constant $C_1 > 0$,
    \begin{equation}
        \label{eq:ex_regularized.claim2}
        \E[|Y_{t_1}^1 - Y_{t_0}^1|^2 ; (\Scc^1_{t_0})^\textrm{c}] \le C_1 t_0^3.
    \end{equation}
    Since $Y$ is a martingale, we have
    \begin{equation}
	    \begin{split}
	    	\E[|Y_{t_1}^1 - Y_{t_0}^1|^2 ; (\Scc^1_{t_0})^\textrm{c}] & = \E[|Y^1_{t_1}|^2 - |Y^1_{t_0}|^2;(\Scc^1_{t_0})^\textrm{c}] \leq \E[|Y^1_{t_1}|^2 ;(\Scc^1_{t_0})^\textrm{c}]\\
	    	& \leq \E[|Y^1_{t_1}|^2 ;(\Scc^1_{t_0} \cup \Scc^2_{t_0})^\textrm{c}] + \E[|Y^1_{t_1}|^2 ;\Scc^2_{t_0}]
	    \end{split}
    \end{equation}
    We estimate each term separately, using the Cauchy-Schwarz inequality and the estimates \eqref{eq:ex_regularized.estimates} in each case. We first bound
    \begin{align}
        \E[|Y_{t_1}^1|^2 ; (\Scc^1_{t_0} \cup \Scc^2_{t_0})^\textrm{c}]
        \le
        \E[|Y_{t_1}|^4]^\frac12 \P[ (\Scc^1_{t_0} \cup \Scc^2_{t_0})^\textrm{c} ]^\frac{1}{2} 
        \le C t_1 \P[ |N_{t^{14}_0}| \ge t_0^2]^\frac12 \le C_2 t_0^{6}.
    \end{align}
    To bound the second term, we additionally apply \eqref{eq:ex_regularized.2_Doob1} to get
    \begin{align}
        \E[|Y_{t_1}^1|^2 ;\Scc^2_{t_0} ]
        &=
        \E[|Y_{t_1}^1|^2 ;\Scc^2_{t_0}, \; |Y^2_{t_1} - Y^2_{t_0}| \le t_0 / 2 ] +
        \E[|Y_{t_1}^1|^2 ;|Y^2_{t_1} - Y^2_{t_0}| \ge t_0 / 2]
        \\
        &\le
        \E[|Y_{t_1}^1|^2 ;(\Scc^1_{t_1})^\textrm{c}] + \!\left( \E[ |Y_{t_1}|^4 ] \P[ |Y_{t_1}^2 - Y_{t_0}^2| \ge t_0 / 2] \right)\!^\frac12
        \\
        &\le
        C_2 t_0^6 + \!\left( C t_0^2 \cdot 4 C_0 t_0^4 \right)\!^\frac12 \le C_3 t_0^3.
    \end{align}   
    Combining the two preceding inequalities yields \eqref{eq:ex_regularized.claim2}.
   	As before, Doob's maximal inequality implies
    \begin{equation}
        \label{eq:ex_regularized.2_Doob2}
        \P[\sup_{t \in [t_0,t_1]} |Y_t^1 - Y_{t_0}^1| \ge t_0 / 4; \; (\Scc^1_{t_0})^\textrm{c}] \le 16 C_1 t_0.    
    \end{equation}
    
    Note that, for $i = 1, 2$,
    \begin{equation}
    	\begin{split}
    		\P [ \Sccc^i_{t_0} \bigtriangleup \Sccc^i_{t_0}] & \leq \P [\Scc^i_{t_0} \bigtriangleup \Sccc^i_{t_1}] + \P[\Sccc^i_{t_0} \setminus (\Scc^i_{t_0} \cup \Sccc^i_{t_1})],
    	\end{split}
    \end{equation}
    and
    \begin{equation}
    	\P[\Sccc^i_{t_0} \setminus (\Scc^i_{t_0} \cup \Sccc^i_{t_1})] = \P[t_0^2 \leq |N_{t_0^{14}}| < t_0, \; |N_{t_1^{14}}| \geq t_1] \leq t_0^{10}.
    \end{equation}
    Hence, to prove the conclusion of the lemma, we only require bounds on $\P [\Scc^i_{t_0} \bigtriangleup \Sccc^i_{t_1}]$, for $i = 1, 2$.
   
   Now consider the events
    \begin{equation}
        A_{t_0} \coloneqq \{ |Y_{t_1}^2 - Y_{t_0}^2| \le t_0 / 4 \} \quad
        \mbox{and}\quad
        B_{t_0} \coloneqq \Scc^2_{t_0} \cap \{ |Y_{t_1}^1 - Y_{t_0}^1| \le t_0 / 4 \},
    \end{equation}
    and observe that \eqref{eq:ex_regularized.2_Doob1} and \eqref{eq:ex_regularized.2_Doob2} imply that
    \begin{equation}
        \P[A_{t_0}^\textrm{c}] \le 16C_0 t_0^4 \quad \mbox{and} \quad
        \P[\Scc^2_{t_0} \setminus B_{t_0}] \le 16 C_1 t_0.
    \end{equation}
   We calculate that
   \begin{align}
        \P[\Scc_{t_0}^1 \bigtriangleup \Sccc_{t_1}^1] & = \P[\Scc_{t_0}^1 \setminus (\Scc_{t_0}^1 \cap \Sccc_{t_1}^1)] + \P[\Sccc_{t_1}^1 \setminus (\Scc_{t_0}^1 \cap \Sccc_{t_1}^1)] \\
        & = \P[\Scc^1_{t_0}] + \P[\Sccc^1_{t_1}] - 2\P[\Scc^1_{t_0} \cap \Sccc^1_{t_1}]\\
        & \le 2(\P[\Sccc_{t_1}^1] - \P[\Scc_{t_0}^1 \cap \Sccc_{t_1}^1]) + t_0^{10},
   \end{align}
   since $\P[\Sccc^1_{t_1}] \geq \P[\Scc^1_{t_0}, |N_{t_0^{14}}| < t_0^2] \geq \P[\Scc^1_{t_0}] - t_0^{10}$.
   Note that we have the inclusions
   \begin{equation}
   		(\Scc_{t_0}^1)^\textrm{c} \cap \Sccc_{t_1}^1 \subseteq (\Scc^1_{t_0} \cup \Scc^2_{t_0})^\textrm{c} \cup (\Scc_{t_0}^2 \cap \Sccc_{t_1}^1), \quad \Scc^2_{t_0} \cap \Sccc^1_{t_1} \subseteq A_{t_0}^\textrm{c}.
   \end{equation}
   Hence
   \begin{align}
        \P[\Scc_{t_0}^1 \bigtriangleup \Sccc_{t_1}^1] \le 2\P[(\Scc^1_{t_0})^\textrm{c} \cap \Sccc^1_{t_1}] + t_0^{10}
        & \le 2 \P[(\Scc_{t_0}^1 \cup \Scc_{t_0}^2)^\textrm{c} \cup (\Scc_{t_0}^2 \cap \Sccc_{t_1}^1)] + t_0^{10}
        \\
        & \le 2 \P[|N_{t^{14}}| \ge t_0^2] + 2 \P[A_{t_0}^\textrm{c}] + t_0\\
        & \le 3 t_0^{10} + 32 C_0 t_0^4.
    \end{align}
    On the other hand, we have
    \begin{align}
        \P[\Scc_{t_0}^2 \bigtriangleup \Sccc_{t_1}^2] \le \P[\Scc^2_{t_0} \setminus B_{t_0}] + \P[|N_{t_0^{14}}| \geq t_0]\le C t_0.
    \end{align}
    Thus we have shown the claim in the case that $n$ is even. Due to symmetry of the arguments we also obtain the desired result when $n$ is odd.	
\end{proof}

\bibliographystyle{abbrvnat}
\bibliography{joint_biblio}
\end{document}